\newtheorem{teo}{Theorem}[section]
\newtheorem{defi}[teo]{Definition}
\newtheorem{lem}[teo]{Lemma}
\newtheorem{pro}[teo]{Proposition}
\newtheorem{remark}[teo]{Remark}
\newtheorem{coro}[teo]{Corollary}
\newcommand{\N}{{\rm I\!N}}
\newcommand{\R}{{\rm I\!R}}
\newcommand{\virg}[1]{``#1''}
\numberwithin{equation}{section}
\title{h-cobordism and s-cobordism Theorems:
Transfer over Semialgebraic and Nash Categories, Uniform bound and Effectiveness }
\author{Demdah Kartoue Mady}
\date{}
\begin{document}
\maketitle
%\tableofcontents
\begin{abstract}
The h-cobordism theorem is a noted theorem in differential and PL topology. A generalization of the h-cobordism theorem for possibly non simply connected manifolds is the so called s-cobordism
theorem.
In this paper, we prove semialgebraic and Nash versions of these theorems.
That is, starting with semialgebraic or Nash cobordism data, we get a semialgebraic homeomorphism (respectively a Nash diffeomorphism).
The main tools used are semialgebraic triangulation and Nash approximation.

One aspect of the algebraic nature of semialgebraic or Nash objects is that one can measure their complexities. We show h and s-cobordism theorems with a uniform bound on the complexity of the semialgebraic homeomorphism (or Nash diffeomorphism) obtained in terms of the complexity of the cobordism data. The uniform bound of semialgebraic h-cobordism cannot be recursive, which gives  another example of 
non effectiveness in real algebraic geometry see [ABB].
Finally we  deduce the validity of the semialgebraic  and Nash versions of these theorems over any real closed field.
\end{abstract}
\section*{Introduction}
The h-cobordism theorem is a classical result in differential and PL topology. In this paper we prove that it holds true in semialgebraic and Nash categories over any real closed field.

Let $M$ be a compact smooth manifold having as boundary a disjoint  union of two smooth manifolds
$M_0$ and $M_1$ such that $M_0$ and $M_1$ are both deformation retracts of $M$.
 A triplet $(M, M_0, M_1)$ like this  is said to be an  \em{h-cobordism.}
\rm  The h-cobordism theorem  states:
\begin{teo} (Smale 1961)\\
Let $( M, M_0, M_1) $ be a simply connected smooth h-cobordism. If \rm  $\mbox{dim} M\geqq 6$  \em{then M is
diffeomorphic to $M_0 \times[0,1]$}.
\end{teo}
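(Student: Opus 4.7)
The plan is to follow Smale's original Morse-theoretic strategy, obtaining the diffeomorphism by successively canceling all critical points of a well-chosen Morse function on $M$. First, I would pick a Morse function $f:M\to[0,1]$ with $f^{-1}(0)=M_0$, $f^{-1}(1)=M_1$, no critical points on $\partial M$, and only non-degenerate critical points in the interior. Together with a gradient-like vector field, this yields a handle decomposition of $M$ built on $M_0$. After rearrangement so that $f$ becomes self-indexing, with all critical points of index $k$ lying at level $k/n$ where $n=\dim M$, the task reduces to eliminating all handles.

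The algebraic input comes from the hypotheses: since $M_0$ is a deformation retract of $M$, the relative homology $H_*(M,M_0)$ vanishes, and simple connectivity is also at our disposal. I would first trade the handles of extreme indices: $0$-handles are canceled against suitable $1$-handles (using connectedness of the pair), and $1$-handles are then traded for $3$-handles using simple connectivity, so that no handles of indices $0$ or $1$ remain. Applying the same argument to $1-f$ eliminates the handles of indices $n-1$ and $n$. This leaves only handles with indices $k$ satisfying $2\le k\le n-2$.

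The heart of the proof is to cancel these remaining middle-dimensional handles. Here I would invoke Smale's basic cancellation lemma: a $k$-handle and a $(k+1)$-handle whose attaching sphere meets the dual belt sphere transversally in a single point can be simultaneously removed. The vanishing of $H_*(M,M_0)$ forces the algebraic intersection numbers in the middle level set to be $\pm 1$, and one realizes this geometrically via the Whitney trick to eliminate pairs of opposite-sign intersections. This step is the main obstacle, and the reason for the dimension hypothesis: the Whitney trick requires the ambient middle level set to have dimension $n-1\ge 5$ (so $\dim M\ge 6$) and the complements of the spheres to be simply connected, which is guaranteed for the index range $2\le k\le n-3$. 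Iterating these cancellations produces a Morse function without critical points, and the flow of a gradient-like vector field for $f$ then yields the required diffeomorphism $M\cong M_0\times[0,1]$.
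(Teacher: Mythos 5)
The paper does not give a proof of this statement: it is quoted in the introduction as Smale's classical theorem, with references to [Sm] and to Milnor's lecture notes [Mi], and it serves as the external input for the semialgebraic and Nash versions proved in the body of the paper. Your outline is essentially the Morse-theoretic proof from [Mi] and is correct in its broad strokes, including the correct identification of the hypothesis $\dim M \ge 6$ as coming from the Whitney trick needing the middle level set to have dimension $n-1 \ge 5$.

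Two small slips are worth flagging. First, the vanishing of $H_*(M,M_0)$ does not by itself force the geometric intersection numbers to be $\pm 1$; what it gives is acyclicity of the free integral handle chain complex, and one must first invoke the Basis Theorem (handle slides realizing integral row and column operations) to bring the boundary matrices to the identity, after which the Whitney trick replaces algebraic intersection numbers by geometric ones. Second, requiring both the belt sphere and the attaching sphere to have codimension at least $3$ in the middle level set gives the range $3 \le k \le n-4$ rather than $2 \le k \le n-3$, which is precisely why the proof in [Mi] includes an additional round of handle-trading before the Whitney step. Neither point undermines the overall strategy, which matches the one the paper implicitly relies on.
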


It is a general procedure to use Tarski-Seidenberg Principle to transfer statements from $\R$ to any real closed field, once uniform  bounds are found for the complexity of all the semialgebraic or Nash objects involved in the statements.

To do this, first of all we need a semialgebraic or Nash version of the h-cobordism theorem, that we easily get  triangulating our manifold and using an approximation result.

Secondly we have to make precise the meaning of some topological facts in a semialgebraic setting and verify that definitions are consistent.

The uniform bound which is established is the following: the complexity of the semialgebraic homeomorphism $f:M\rightarrow M_0\times [0,1]$ can be bounded in terms  only of the complexity of the h-cobordism $(M,M_0,M_1).$ 
This enables us  to translate  the semialgebraic h-cobordism theorem to a countable  family of first order statements of the theory of real closed fields (one for each complexity of the triplet $(M,M_0,M_1)$).

Once this is done, we can use Tarski-Seidenberg  Principle to transfer the semialgebraic or Nash h-cobordism theorem to any real closed field.

In a similar way we get also the semialgebraic and Nash s-cobordism 
theorems over any real closed field

It is a natural question to ask whether the uniform bounds that we get 
are effective or not,  that is 
to ask whether  the complexity of the isomorphism $f:M \to 
M_0 \times [0,1]$   is bounded by a recursive function of the complexity of $(M,M_0,M_1)$.

We prove that this cannot be the case for the h-cobordism theorem. The failure is because we have
to recognise wether a semialgebraic set is simply connected or not.

The non effectiveness of the  h-cobordism theorem  is another exemple of 
non effectiveness in real algebraic geometry see [ABB].

I would like to thank F. Acquistapace, F. Broglia, M. Coste and   M. Shiota  for their advices during the preparation of  this work.

\section{Semialgebraic and Nash  h-cobordism theorems and s-cobordism theorems}
We shall agree in this work that every semialgebraic mapping is continuous. A semialgebraic manifold is a semialgebraic subset $M$  of $\R^n$ (or of $R^n$, where $R$ is a real closed field) equipped with a finite semialgebraic atlas, that is, $M= \cup_{i\in I} U_i$, $I$ finite set, $U_i$  open semialgebraic in $M$ and $\phi_i:U_i\rightarrow \R^d$ a semialgebraic  homeomorphism onto an open semialgebraic subset of $\R^d$).\\
A Nash manifold is a semialgebraic  subset of $\R^n$ (or of $R^n$) which is also a $C^{\infty}$ submanifold and is equipped with a finite Nash atlas $\{U_i, \phi_i\}$ where $\phi_i$ is semialgebraic and $C^{\infty}$. For more detail see [S].

Any compact semialgebraic  set $S\subset \R^n$ (or $R^n$) can be triangulated,  i.e.  there  is a finite simplicial complex $K$ in $\R^n$ (or $R^n$) and a semialgebraic homeomorphism $h:|K|\rightarrow S$ (where $|K|$ is the union of the simplices of $K$). Moreover the semialgebraic triangulation can be chosen compatible with a finite  family $(T_i)_{i \in I}$ of a semialgebraic  subsets of $S$, which means that each $T_i$ is the image by $h$ of the union of some open simplices (see [BCR] p.217).
 The semialgebraic triangulation is unique, in the sense that any two compact polyhedra $K$ and $L$ which are semialgebraically homeomorphic are PL homeomorphic (cf.[SY]). Hence any semialgebraic set gets a unique PL structure.

Also we get:
\begin{pro}\label{sapl}
 Every compact semialgebraic manifold is semialgebraically 
  \newline
 homeomorphic to a PL manifold.
\end{pro}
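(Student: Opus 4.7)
My plan is to combine the semialgebraic triangulation theorem stated just above with the uniqueness of semialgebraic triangulation of [SY] (any two compact polyhedra that are semialgebraically homeomorphic are PL homeomorphic) in order to exhibit $M$ as semialgebraically homeomorphic to a PL manifold. The strategy is: triangulate $M$ semialgebraically, then verify that the resulting polyhedron satisfies the local PL-manifold condition (the link of every vertex is a PL sphere).

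First, I would apply the semialgebraic triangulation theorem to $M$, taking it compatible with the finitely many chart domains $U_i$ of the semialgebraic atlas, producing a finite simplicial complex $K\subset\R^n$ and a semialgebraic homeomorphism $h\colon |K|\to M$. After passing to a sufficiently fine barycentric subdivision, I can arrange that for every vertex $v\in K$ the closed star $\overline{\mathrm{st}}(v,K)$ is sent by $h$ into a single chart $(U_{i(v)},\phi_{i(v)})$. Then $\psi_v := \phi_{i(v)}\circ h$ is a semialgebraic homeomorphism of $\overline{\mathrm{st}}(v,K)$ onto a compact semialgebraic neighborhood $S_v$ of $q_v := \phi_{i(v)}(h(v))$ in $\R^d$. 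Inside $S_v$ I choose a small standard closed simplex $\Delta_v$ containing $q_v$ in its interior, and let $A_v = \psi_v^{-1}(\Delta_v)$.

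The set $A_v$ is a priori only semialgebraic, so I invoke the compatibility clause of the triangulation theorem once more to refine $K$ to a simplicial complex $K'$ with the same underlying polyhedron, in which every $A_v$ appears as a subcomplex. Each $A_v$ is now a compact polyhedron semialgebraically homeomorphic (via $\psi_v$) to the PL disk $\Delta_v$. By Shiota--Yokoi, $A_v$ is PL homeomorphic to $\Delta_v$, hence a PL $d$-ball. Since $v\in\mathrm{int}(A_v)$ and $\overline{\mathrm{st}}(v,K')\subset A_v$ is the closed star of $v$ in a triangulation of this PL ball, the regular neighborhood theorem of PL topology gives that $\overline{\mathrm{st}}(v,K')$ is itself a PL $d$-ball, equivalently $\mathrm{lk}(v,K')$ is a PL $(d-1)$-sphere. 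This holds at every vertex, so $|K'|$ is a PL manifold and $h\colon |K'|\to M$ is the desired semialgebraic homeomorphism.

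The main obstacle is the transition from semialgebraic to polyhedral data: the pulled-back set $A_v$ is only known to be semialgebraic, so the uniqueness of semialgebraic triangulation cannot be applied until $A_v$ has been made a subcomplex of a refinement of $K$. Organising this simultaneously for all (finitely many) vertices, in such a way that the resulting $K'$ still has the stars controlled by the charts used to define each $A_v$, is accomplished by a single application of the compatibility clause of the semialgebraic triangulation theorem applied to the finite family $\{A_v\}_v$.
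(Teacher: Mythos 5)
Your proposal follows the same strategy as the paper's proof: triangulate $M$ semialgebraically, pull back a small PL ball through a chart, invoke the Shiota--Yokoi uniqueness of semialgebraic triangulation to conclude the pulled-back set is a PL ball, and deduce that the underlying polyhedron is a PL manifold. Your version is more carefully organised than the paper's: the paper argues point-by-point and asks the triangulation $h$ to be compatible with $\phi^{-1}(B)$ for each $x$ (with $B$ depending on $x$), which is an infinite, pointwise demand that cannot literally be met by a single finite triangulation; you instead work vertex-by-vertex (a finite set) and perform a single compatibility refinement so that all the sets $A_v$ become subcomplexes simultaneously, then verify the link condition directly. One small technical caveat: to conclude that $\overline{\mathrm{st}}(v,K')$ is a PL $d$-ball, the most direct route is not the regular neighborhood theorem applied blindly to $K'$ (the closed star of a vertex in an arbitrary triangulation need not be a regular neighborhood without first passing to a derived subdivision); rather, observe that $K'|_{A_v}$ is, by Shiota--Yokoi, a combinatorial triangulation of the PL ball $A_v$, hence a combinatorial manifold-with-boundary, so the link of the interior vertex $v$ is already a PL $(d-1)$-sphere and $\mathrm{lk}(v,K')=\mathrm{lk}(v,K'|_{A_v})$. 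Either way the conclusion holds, and the argument is sound.
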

\begin{proof}
Let $M$ be a compact semialgebraic manifold of dimension $m$. There is a  semialgebraic triangulation 
$h:|K|\longrightarrow M $ where  $K$ is a finite  simplicial complex.
We have to check that the polyhedron $|K|$ is a PL manifold. Take $x \in |K|$ and  $y =h(x)$. 
By  definition, 
there is a neighbourhood  $V$ of $y$ in $M$ semialgebraically homeomorphic to a open semialgebraic set $U$ 
in $\R^m$, that is, there is a semialgebraic chart  $(V, \phi)$ such that $\phi:V
\longrightarrow U$ is a semialgebraic homeomorphism. Then,
there is an open neighbourhood $h^{-1}(V)$ of $x$ in $|K|$
semialgebraically homeomorphic to an open semialgebraic set $U$ of
 $\R^m$. There is  a closed PL ball $B\subset U$ such that $\phi(y)\in \mbox{Int}B$. It follows that the set $W=h^{-1}\circ
\phi^{-1}(B)$ is a closed and bounded neighbourhood of 
$x$ in $|K|$.  Assuming the triangulation $h$ to be  compatible
with $\phi^{-1}(B)$, one has that $W$ is a polyhedron.
It follows that $W$ and $B$ are
semialgebraically homeomorphic. By uniqueness, they are PL homeomorphic. Then
$\mbox{Int}(W)$ and $\mbox{Int}(B)$ are PL
homeomorphic. This shows that $|K|$ is a  PL manifold.
\end{proof}
\begin{defi}\rm  Let $ (M, M_0, M_1)$ be a triple of compact semialgebraic manifolds such that:
 $\partial M = M_0\bigcup M_1$ and
  $ M_0\cap M_1= \emptyset .$
Then, $ ( M, M_0, M_1) $ is called a \em{semialgebraic cobordism}\rm.\\
  A semialgebraic cobordism $( M, M_0, M_1) $ is said to be a  \em{ semialgebraic h-cobordism } \rm  if the inclusions
$M_0 \hookrightarrow M$ and
 $M_1 \hookrightarrow M$ are  semialgebraic homotopy equivalences, that is, 
  the deformation retractions are semialgebraic.
\end{defi}
Let $X$ be a semialgebraic set defined over a real closed field
$R$. The semialgebraic fundamental group of $X$ can be defined considering
 semialgebraic loops and semialgebraic homotopies between loops. We write
$\pi_1(X,x_0)_{alg}$ with $x_0 \in X$.
\newline
If $R =\R$ we have:

\begin{pro}(\cite{DK}, Theorem 6.4, p.271) \label{con}\\
Let $X$ be a closed semialgebraic subset of $\R^n$. Then 
 $\pi_1(X,\,x_0)_{alg}$ and $\pi_1(X,\,x_0)$ are 
isomorphic.
\end{pro}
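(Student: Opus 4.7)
The plan is to exhibit the obvious comparison map $\Phi\colon\pi_1(X,x_0)_{alg}\to\pi_1(X,x_0)$ that sends the semialgebraic homotopy class of a semialgebraic loop to its continuous homotopy class (recall every semialgebraic map is continuous, so this is well defined; concatenation and the constant loop are the same operations on both sides, so $\Phi$ is a group homomorphism). It then remains to prove bijectivity, and the tool for both directions is semialgebraic triangulation, which reduces the problem to the PL/topological equality of fundamental groups of a polyhedron.

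For surjectivity, given a continuous loop $\gamma\colon[0,1]\to X$ based at $x_0$, observe that $\gamma([0,1])$ is compact and contained in a compact semialgebraic neighbourhood $X'\subset X$ (e.g.\ $X\cap \bar B(0,R)$ for large $R$). By the semialgebraic triangulation theorem applied to $X'$ compatibly with $\{x_0\}$, we have $h\colon|K|\to X'$ with $v_0:=h^{-1}(x_0)$ a vertex of~$K$. The pullback $h^{-1}\circ\gamma$ is a continuous loop at $v_0$ in the polyhedron $|K|$; after a sufficiently fine subdivision of the domain, the simplicial approximation theorem yields a simplicial loop $\tilde\sigma$ at $v_0$ which is continuously homotopic rel endpoints to $h^{-1}\circ\gamma$. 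The map $h\circ\tilde\sigma$ is PL, hence semialgebraic, and pushing the homotopy forward through $h$ shows $\Phi[h\circ\tilde\sigma]=[\gamma]$.

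For injectivity, suppose $\sigma_0,\sigma_1$ are semialgebraic loops at $x_0$ and $H\colon[0,1]^2\to X$ is a continuous homotopy rel endpoints between them. As before, triangulate a compact semialgebraic neighbourhood of $\mathrm{Im}(H)$ by $h\colon|K|\to X'$ compatibly with $\{x_0\}$ and with $\mathrm{Im}(\sigma_0),\mathrm{Im}(\sigma_1)$; after subdivision we may assume $h^{-1}\circ\sigma_i$ is already simplicial. Then $\tilde H=h^{-1}\circ H$ is continuous on the square, simplicial on its boundary, and the relative simplicial approximation theorem produces a simplicial, hence PL and semialgebraic, map $\tilde H'$ agreeing with $\tilde H$ on $\partial[0,1]^2$. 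The composite $h\circ\tilde H'$ is a semialgebraic homotopy realising $[\sigma_0]=[\sigma_1]$ in $\pi_1(X,x_0)_{alg}$.

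The main obstacle is the relative simplicial approximation step: one must approximate $\tilde H$ without disturbing its values on the boundary of the square, which requires subdividing only the interior simplices of the triangulation of $[0,1]^2$ and controlling the mesh via a Lebesgue number of the open cover $\{\tilde H^{-1}(\mathrm{Star}(v))\}_{v\in K^{(0)}}$. Once this standard PL topology ingredient is available the rest of the argument is formal, and the same triangulation strategy simultaneously handles the fact that the basepoint, the loops, and the endpoints of the homotopy must all be respected.
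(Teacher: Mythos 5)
The paper does not give a proof of this proposition: it is quoted verbatim from Delfs--Knebusch \cite{DK}, Theorem~6.4, so there is no in-paper argument to compare yours against. Your route --- exhibit the forgetful homomorphism $\Phi\colon\pi_1(X,x_0)_{alg}\to\pi_1(X,x_0)$ and prove bijectivity via semialgebraic triangulation plus (relative) simplicial approximation --- is the natural one and is in substance correct, and you are right that Zeeman's relative simplicial approximation theorem is where the genuine technical content sits. One sentence in the injectivity step is, however, not literally true as written: ``after subdivision we may assume $h^{-1}\circ\sigma_i$ is already simplicial.'' Subdividing the domain $[0,1]$ does not turn an arbitrary semialgebraic loop into a simplicial one. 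What you actually want is to first replace $\sigma_i$ by $h$ composed with a simplicial approximation of $h^{-1}\circ\sigma_i$; the straight-line homotopy from a map to its simplicial approximation (taking place inside stars of vertices) is PL, hence semialgebraic, so this replacement does not change the class in $\pi_1(X,x_0)_{alg}$ and the rest of the argument goes through. A second small remark: you implicitly use that the homotopies built inside the compact semialgebraic truncation $X'=X\cap\bar B(0,R)$ push forward under the inclusion $X'\hookrightarrow X$ to give the required homotopies in $X$ on both the semialgebraic and the topological side of $\Phi$; this is fine, but is worth stating explicitly since the triangulation lives on $X'$, not on $X$.
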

The results just recalled enable us to translate the PL h-cobordism theorem to the semialgebraic category.
\begin{teo}\label{hcob}
Let $( M, M_0, M_1) $ be a semialgebraic h-cobordism simply
connected in $\R^n$. If \rm $\mbox{dim} M\geqq 6$  \em{then $M$ is
semialgebraically homeomorphic to $M_0 \times [0,1]$.}
\end{teo}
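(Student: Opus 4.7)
The plan is to reduce Theorem \ref{hcob} to the classical PL h-cobordism theorem, with semialgebraic triangulation serving as the bridge between the semialgebraic and PL worlds. First I would choose a semialgebraic triangulation $h\colon |K|\to M$ compatible with the family $\{M_0,M_1\}$, so that $M_i=h(|K_i|)$ for subcomplexes $K_0,K_1\subset K$. The argument of Proposition \ref{sapl} applies at every point of $|K|$, interior or boundary, showing that $|K|$ is a compact PL manifold with $\partial|K|=|K_0|\sqcup|K_1|$ and that $|K_0|$, $|K_1|$ are closed PL submanifolds.

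Next I would verify that $(|K|,|K_0|,|K_1|)$ meets the hypotheses of Smale's PL h-cobordism theorem. Since $h$ is a homeomorphism, the semialgebraic deformation retractions of $M$ onto $M_0$ and $M_1$ transfer via $h^{-1}$ to continuous deformation retractions of $|K|$ onto $|K_0|$ and $|K_1|$, making the inclusions $|K_i|\hookrightarrow|K|$ ordinary homotopy equivalences. For simple connectedness, Proposition \ref{con} identifies $\pi_1(M,x_0)_{\mathrm{alg}}$ with the topological $\pi_1(M,x_0)$, so the standing hypothesis yields $\pi_1(|K|)=0$. The dimension condition $\dim|K|\geq 6$ is immediate.

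Smale's PL h-cobordism theorem then produces a PL homeomorphism $g\colon |K|\to |K_0|\times[0,1]$. Any PL homeomorphism between compact polyhedra in Euclidean space is automatically semialgebraic, since its graph is a finite union of affine simplices. Composing,
\[
M\xrightarrow{h^{-1}} |K| \xrightarrow{g} |K_0|\times[0,1] \xrightarrow{(h|_{|K_0|})\times\mathrm{id}} M_0\times[0,1],
\]
yields the desired semialgebraic homeomorphism.

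The main delicacy I anticipate is ensuring that every arrow in this chain is semialgebraic and not merely continuous: $h$ and $h^{-1}$ are so by construction, $g$ is PL and hence semialgebraic, and the final factor is a semialgebraic product homeomorphism. A related care-point is that Smale's PL theorem only requires topological homotopy equivalences as input, so the semialgebraic nature of the deformation retractions is used only to make the hypotheses true after transporting by $h^{-1}$; one need not attempt to extract PL retractions directly, which would otherwise be a genuine obstruction.
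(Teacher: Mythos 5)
Your proof is correct and follows essentially the same route as the paper: triangulate semialgebraically compatibly with $M_0, M_1$, verify via Proposition \ref{sapl} that the polyhedra are PL manifolds, invoke the PL h-cobordism theorem, and observe that a PL homeomorphism between compact polyhedra is semialgebraic. The one small divergence is in handling the homotopy-equivalence hypothesis: you observe (correctly) that the PL h-cobordism theorem only requires the inclusions $|K_i|\hookrightarrow |K|$ to be topological homotopy equivalences, which the transported retractions supply immediately; the paper instead upgrades them to PL deformation retracts via PL approximation of maps (Hudson, Lemma 4.2), which is not a "genuine obstruction" as you suggest but simply a different, equally routine, way of certifying the PL h-cobordism hypothesis.
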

\begin{proof}
By Proposition \ref{sapl}, there is a semialgebraic triangulation
$\lambda: |K| \longrightarrow M.$ 
 We may assume that the triangulation is
compatible with the submanifolds $M_0$ and $M_1$, i.e. there are  simplicial subcomplexes $K_0, K_1 \subset K$ that $\lambda(|K_0|)=M_0$ and $\lambda(|K_1|)=M_1$.  The polyhedra  $|K|$, $|K_0|$ and $|K_1|$  are  compact PL manifolds (Proposition \ref{sapl}). The polyhedra $|K_0|$ and $|K_1|$  are semialgebraic deformation retracts of $|K|$. They are also PL deformation retracts of $|K|$, by PL approximation (cf [H], Lemma 4.2, p. 92). It follows that $(|K|, |K_0|, |K_1|)$ is a  simply
connected PL h-cobordism.  Then by the PL h-cobordism theorem 
$ |K|\stackrel{PL}{\cong}|K_0|\times [0,1],$ where $\stackrel{PL}{\cong}$ indicates the PL homeomorphism. Since a compact PL manifold is a semialgebraic manifold and a PL homeomorphism 
between  compact PL manifolds is a semialgebraic homeomorphism, it follows easily that $M$ is
semialgebraically homeomorphic to $M_0 \times [0,1]$. This ends the proof.
\end{proof}

\section{Extension of some topological properties}
 In this section, we want to extend the meaning of some  topological properties as semialgebraic simple connectedness and  s-homotopy  from $\R$ to any real closed field $R$. This will be useful in the sequel of this paper.
 
 Let $R$ and $K$ be two real closed fields such that $K$ is a real closed extension of $R$. If $X$ is semialgebraic subset of $R^n$, we denote by $X_K$ the semialgebraic subset of $K^n$ defined by the same boolean combination of polynomial equation and inequalities as $X$. Actually  by Tarski-Seidenberg Principle, $X_K$ depends only on $X$ and not on its description.
\begin{pro}\label{homeo}
Let $X$ and $Y$ be two semialgebraic subsets of $R^n$. The semialgebraic sets $X$ and
$Y$ are semialgebraically homeomorphic if and only if  $X_K$
and $Y_K$ are semialgebraically homeomorphic.
\end{pro}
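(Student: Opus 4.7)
The plan is to prove the two directions separately: the $(\Rightarrow)$ implication follows from a direct application of the Tarski-Seidenberg transfer principle, while $(\Leftarrow)$ is reduced to a combinatorial PL statement via semialgebraic triangulation.

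For the direction $(\Rightarrow)$, I start from a semialgebraic homeomorphism $f: X \to Y$ defined over $R$ and consider its graph $\Gamma_f \subset R^{2n}$, a semialgebraic set described by some boolean combination $\Phi$ of polynomial (in)equalities with coefficients in $R$. The three conditions ``$\Gamma_f$ is the graph of a function defined on all of $X$'', ``that function is a bijection onto $Y$'', and ``the function and its inverse are continuous'' are each expressible by a first-order formula with parameters in $R$ (continuity via the usual $\varepsilon$-$\delta$ statement). By Tarski-Seidenberg, the set $(\Gamma_f)_K \subset K^{2n}$ obtained by interpreting $\Phi$ over $K$ still satisfies these conditions, and is therefore the graph of a semialgebraic homeomorphism $X_K \to Y_K$.

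For the direction $(\Leftarrow)$, the difficulty is that the given homeomorphism $g: X_K \to Y_K$ is described by polynomials whose coefficients lie in $K$ and need not belong to $R$, so it cannot be restricted to a map over $R$. Treating the compact case (the one we actually need in the paper), the plan is to route through PL topology. By the semialgebraic triangulation theorem, valid over any real closed field, there exist finite simplicial complexes $K_X, K_Y$ with vertices in $R^N$ and semialgebraic homeomorphisms $X \cong |K_X|$, $Y \cong |K_Y|$. Extending scalars, the polyhedra $|K_X|_K$ and $|K_Y|_K$ are the realisations over $K$ of the very same abstract complexes, and composing with the given $g$ makes them semialgebraically homeomorphic over $K$. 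Applying the Shiota-Yokoi uniqueness of the PL structure (recalled just before Proposition~\ref{sapl}) over $K$ then yields a PL homeomorphism $|K_X|_K \stackrel{PL}{\cong} |K_Y|_K$.

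The key step, which I expect to be the main obstacle, is to descend this PL equivalence from $K$ back to $R$. The PL homeomorphism over $K$ witnesses the existence of simplicial subdivisions of $(K_X)_K$ and $(K_Y)_K$ which are abstractly combinatorially isomorphic. Once this combinatorial type is fixed---a purely finite datum determined by the given $g$---realising the same type in $R^N$ amounts to producing finitely many vertex coordinates satisfying the linear polynomial equalities and strict inequalities that encode the subdivisions, which is a first-order sentence with parameters in $R$. Because the sentence is true over $K$, Tarski-Seidenberg transfers it to $R$, yielding a PL and hence semialgebraic homeomorphism $|K_X| \to |K_Y|$; composition with the two triangulating homeomorphisms provides the desired semialgebraic homeomorphism $X \to Y$.
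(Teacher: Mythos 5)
Your $(\Rightarrow)$ direction matches the paper. For $(\Leftarrow)$ you have taken a genuinely different and considerably more involved route, and in doing so you have both lost generality and missed the short argument the paper actually uses.

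The paper's proof of $(\Leftarrow)$ is one step: write $X=\{x:\phi(a,x)\}$, $Y=\{x:\psi(b,x)\}$ with $a,b$ over $R$, and let $\Gamma_g=\{(x,y):\chi(c,x,y)\}$ describe the graph of the given homeomorphism $g:X_K\to Y_K$, with $c\in K^r$. The \emph{formula} $\chi$ is a fixed boolean combination; only the parameter $c$ lives in $K$. One writes a first-order formula $\lambda(a,b,z)$ asserting that $\chi(z,\cdot,\cdot)$ is the graph of a semialgebraic homeomorphism from the set cut out by $\phi(a,\cdot)$ onto the set cut out by $\psi(b,\cdot)$. Then $K\models\exists z\,\lambda(a,b,z)$, and since $a,b$ are in $R$, Tarski--Seidenberg gives $R\models\exists z\,\lambda(a,b,z)$, producing a witness $c'\in R^r$ and hence a homeomorphism $X\to Y$ over $R$. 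The obstruction you correctly identify --- that the coefficients of $g$ need not lie in $R$ --- is dissolved simply by existentially quantifying those coefficients away; no triangulation is needed.

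Your detour through PL topology has two costs. First, it only applies to compact semialgebraic sets (you say so yourself), whereas the proposition as stated makes no compactness assumption, and the paper's proof handles the general case for free; so as written your argument proves a strictly weaker statement. Second, the final descent step glosses over some points (the constraints needed to guarantee that the chosen vertex coordinates in $R^N$ actually produce subdivisions of $|K_X|$ and $|K_Y|$, that the combinatorially isomorphic realisations yield an honest PL bijection, etc.). These are all expressible in first-order form, so the argument can in principle be completed for the compact case, but it reproves in a roundabout way what a single existential quantifier over the graph's parameters gives directly. If you want a one-line takeaway: whenever the goal is to transfer \emph{existence} of a semialgebraic object down from an extension field, try first to quantify existentially over the parameter tuple describing that object, rather than trying to descend the object itself.
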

\begin{proof}
The first implication is obvous.\\
Conversely, set  $X= \{x\in R^n: \phi(a,x) \},$
$Y=\{x\in R^n: \psi(b,x) \}$ where $\phi(a,x)$ and $\psi(b,x)$ are first order 
formulas of the theory of real closed fields with parameters $a\in R^m$ and $b\in R^{m'}$.  Let $f$ be a semialgebraic homeomorphism
from $X_K$ onto $Y_K$. Let $\psi(c,x,y)$ be a first order 
formula of the theory of real closed field with parameter
$c\in K^{r}$ defining  
$ \Gamma_f = \{(x,y)\in K^n\times K^n: \psi(c,x,y) \}$ the graph of $f$. One can get a first order formula in the theory of real closed fields $\lambda(a,b,c)$ which says that $f$ is a semialgebraic homeomorphism between $X_K$ and $Y_K$. So, we have 
$K\models \lambda(a,b,c).$  Let us observe that: $K\models \exists z\,\lambda(a,b,z)$ with $a\in R^m$ and $b\in R^{m'}$.
By  Tarski-Seidenberg Principle, we get: $R\models \exists
z\,\lambda(a,b,z).$ That is, there exists a parameter $c'\in
R^{r}$ which defines a homeomorphism between $X$ and
$Y$. This completes the proof.
\end{proof}

Next step is to show  that the notion of being $C^r$-Nash manifold  can be translated into a  first order formula of the theory of real closed fields. This can be done using the fact that a $C^r$-manifold is locally the graph of a $C^r$-map.

\begin{pro} \label{variete}  Let $S\subset \R^n$ be a semialgebraic set. Then,
 the statement
 \virg{$S$ is a $C^r$-Nash submanifold of  $\R^n$ of dimension $m$}
  can be translated into a first order formula of the theory of real closed fields.

\end{pro}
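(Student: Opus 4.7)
The plan is to combine the local graph characterisation of $C^r$-submanifolds with the observation that the $\varepsilon$-$\delta$ definition of differentiability, applied to a function whose graph is given by a first-order formula, is itself expressible in first-order logic over ordered fields.

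Write $S = \{x \in \R^n : \phi(a,x)\}$ for a first-order formula $\phi$ with parameter tuple $a$. First I would encode the local graph property: for every $p \in S$ there exist $\rho > 0$ and a permutation $\sigma$ of the $n$ coordinates (a finite disjunction, hence still first-order) such that for every $y$ in the projection of $B(p,\rho) \cap S$ onto its first $m$ coordinates there is a unique $z \in \R^{n-m}$ with $\sigma^{-1}(y,z) \in S \cap B(p,\rho)$. The implicit map $g:y\mapsto z$ needs no new symbol, since its values are recovered inside the formula through $\phi$; in particular $g$ is automatically semialgebraic.

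Next I would append a $C^r$-regularity clause. Differentiability of $g$ at $y_0$ with Jacobian matrix $A$ is the first-order statement
\[
\forall \varepsilon > 0\,\exists \delta > 0\,\forall h\,\bigl(\|h\| < \delta \Rightarrow \|g(y_0+h)-g(y_0)-Ah\| \leq \varepsilon \|h\|\bigr),
\]
where $g(y_0+h)$ and $g(y_0)$ are replaced by the implicit descriptions above. I would existentially quantify over the entries of $A = A(y_0)$ at every $y_0$, and continuity of $y_0 \mapsto A(y_0)$ is again an $\varepsilon$-$\delta$ statement. For higher orders I iterate: at stage $k \leq r$ I introduce further existential quantifiers for the tensor of $k$-th partial derivatives at each point and impose the same differentiability and continuity clauses. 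After at most $r$ iterations one obtains a single first-order formula $\Lambda_{m,n,r}(a)$ that holds over $\R$ precisely when $S$ is an $m$-dimensional $C^r$-Nash submanifold of $\R^n$.

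The main obstacle I expect is ensuring that no genuine second-order quantification over functions slips into $\Lambda_{m,n,r}$. This is avoided because the implicit function $g$ is already captured by $\phi$, so all auxiliary quantifiers range only over tuples of real numbers: the permutations in the symmetric group (handled by a finite disjunction), the radii $\rho, \varepsilon, \delta$, the derivative tensors at each point, and the base points themselves. A secondary bookkeeping task is to check that ``$g$ is continuous'' and ``its partial derivatives of order $\leq r$ vary continuously'' unfold into explicit first-order formulas, but this is standard and introduces no further difficulty.
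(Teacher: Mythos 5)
Your proof follows the same route as the paper: reduce to the local graph characterisation, handle permutations of coordinates by a finite disjunction, and observe that $C^r$-smoothness of the implicitly defined map is itself expressible by $\varepsilon$-$\delta$ formulas over the ordered field. You simply spell out the $C^r$-translation (iterated difference quotients, continuity of derivative tensors) in more detail than the paper, which states without elaboration that ``is the graph of a $C^r$-function'' can be written as a first-order formula.
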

\begin{proof} Set $x=(x_1,\ldots,x_n)$, $y=(x_1,\ldots,x_m)$,
$z=(x_{m+1},...,x_n)$. We can write a formula  $\Phi(x)$ which says that there are
 positive real numbers $\varepsilon$ and $\eta$ such that
$S\cap (B^m(y,\varepsilon)\times B^{n-m}(z,\eta))$ is the graph of a $C^r$-function
  from $B^m(y,\varepsilon)$ to
$\R^{n-m}$.  Furthermore,
for all permutation $\sigma$ of $\{1,\ldots,n\}$, let us indicate by
$\Phi_\sigma(x)$ the formula that says the same things for the image of 
$x$ and  $S$ by the permutation $\sigma$ of the coordinates (in order  to get all projections on $m$ coordinates among $n$).
There exists  a
permutation $\sigma$ of $\{1,\ldots,n\}$ such that $\Phi_\sigma(x)$
is true.
\newline
We deduce the following formula:
$\forall x\in S\ \bigvee_\sigma \Phi_\sigma(x)$
which says clearly that $S$ is a $C^r$-Nash submanifold of dimension $m$ of  $\R^n.$

\end{proof}

\begin{pro}\label{bord}
 Let $S$ and $T$ be semialgebraic subsets of $\R^n$ such that
  $T\subset S$. Then, the statement  \virg{$S$  is a $C^r$-Nash submanifold of $\R^n$ of dimension $m$,  with boundary 
 the set $T$} can be translated into a first order  formula of the theory of real closed fields.
\end{pro}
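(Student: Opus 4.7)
The plan is to mimic Proposition \ref{variete} but to distinguish two kinds of points of $S$: interior points (those in $S\setminus T$), where $S$ should look locally like the graph of a $C^r$-function over an \emph{open} ball of $\R^m$, and boundary points (those in $T$), where $S$ should look locally like the graph of a $C^r$-function over a \emph{closed half-ball} of $\R^m$, with $T$ corresponding to the flat face of that half-ball.

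Since $S$ and $T$ are semialgebraic, so is $S\setminus T$, so Proposition \ref{variete} already supplies first order formulas for ``$S\setminus T$ is a $C^r$-Nash submanifold of $\R^n$ of dimension $m$'' and ``$T$ is a $C^r$-Nash submanifold of $\R^n$ of dimension $m-1$''. What remains is the local half-space condition at points of $T$. For each permutation $\sigma$ of $\{1,\ldots,n\}$ I would write a formula $\Psi_\sigma(x)$ saying: there exist reals $\varepsilon,\eta>0$ and coefficients $a_0,a_1,\ldots,a_m$ with $(a_1,\ldots,a_m)\ne 0$ such that, after applying $\sigma$ to the coordinates and setting $y=(x_1,\ldots,x_m)$, $z=(x_{m+1},\ldots,x_n)$, $L(u)=a_0+a_1 u_1+\cdots+a_m u_m$, one has $L(y)=0$, the set $S\cap(B^m(y,\varepsilon)\times B^{n-m}(z,\eta))$ is the graph of a $C^r$-function defined on the closed half-ball $H=\{u\in B^m(y,\varepsilon):L(u)\geq 0\}$, and $T\cap(B^m(y,\varepsilon)\times B^{n-m}(z,\eta))$ is the graph restricted to $\{u\in B^m(y,\varepsilon):L(u)=0\}$.

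Combining everything, the desired first order formula for the proposition then reads
\[
\forall x\in S\ \Bigl((x\notin T\land\bigvee_\sigma \Phi_\sigma(x))\lor(x\in T\land\bigvee_\sigma \Psi_\sigma(x))\Bigr),
\]
conjoined with the two global first order formulas produced by Proposition \ref{variete}, where $\Phi_\sigma$ is as in that proposition. The main obstacle is to verify that ``graph of a $C^r$-function on a closed half-ball'' is genuinely first order at points of the flat face, since partial derivatives transverse to $\{L(u)=0\}$ exist there only one-sidedly. This is handled exactly as the open case is treated implicitly in Proposition \ref{variete}: existence and continuity of all partial derivatives up to order $r$ are expressed by $\varepsilon$-$\delta$ quantifications, with one-sided limits replacing two-sided ones at the points where $L(u)=0$. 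Once this is accepted, the conjunction above gives the required first order description.
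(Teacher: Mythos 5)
There is a genuine gap in your local model at boundary points. You assume that, after a suitable permutation $\sigma$ of the ambient coordinates, $S$ near a point of $T$ is the graph of a $C^r$-function over an \emph{affine} closed half-ball $H=\{u\in B^m(y,\varepsilon):L(u)\geq 0\}$. That forces the projection of $T$ to the $m$ chosen coordinates to lie locally in the hyperplane $\{L=0\}$, which a coordinate permutation alone cannot arrange. Take for instance $n=3$, $m=2$,
\[
S=\{(u,v,0)\in\R^3:\ v\geq u^2\},\qquad T=\{(u,u^2,0)\}.
\]
This is a Nash submanifold with boundary, but near the origin no choice of two of the three coordinates and no affine $L$ makes $S$ a graph over an affine half-ball: projecting to $(x_1,x_2)$ the domain is $\{v\geq u^2\}$, whose edge is a parabola, not a line; projecting to $(x_1,x_3)$ or $(x_2,x_3)$ fails to give a graph at all. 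So your formula $\Psi_\sigma$ would wrongly declare that such an $S$ is not a manifold with boundary.

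The correct local model must allow the boundary of the graph domain to be a curved $C^r$-hypersurface. This is exactly what the paper does: its $\Psi(x)$ first asserts that $T$ is locally the graph of a $C^r$-map $g$ over $m-1$ variables, extracts the first component $\xi$ of $g$, and then requires $S$ to be locally the graph of a $C^r$-map over the overgraph $\Gamma_\xi^+$ (or undergraph $\Gamma_\xi^-$) of $\xi$ — that is, the possibly curved half-region $\{v\geq\xi(u)\}$, not an affine one. Your observation about the one-sided differentiability issue and the idea of resolving it by one-sided $\varepsilon$-$\delta$ limits is sound and parallels the paper's device of straightening $\Gamma_\xi^+$ onto $B^{m-1}\times\R^+$ by $(x,y)\mapsto(x,y+\xi(x))$, but it does not repair the incorrect affine-half-ball hypothesis. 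To fix the proof, replace the affine $L$ by the $C^r$ function $\xi$ obtained from the local description of $T$, as in the paper.
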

\begin{proof} Let $x=(x_1,\ldots,x_n)$, $y=(x_1,\ldots,x_{m-1})$,
$z=(x_{m+1},...,x_n)$.
 We can write a first order formula of the theory of real closed fields  $\Psi(x)$ which says that there are
 positive real numbers $\varepsilon$, $\delta$ and $\eta$ such that both 1) and 2) below hold.
\begin{enumerate}
\item $T\cap (B^{m-1}(y,\varepsilon)\times {]} x_m-\delta,x_m+\delta{[} \times
B^{n-m}(z,\eta))$ is the graph of a $C^r$- map
$g: B^{m-1}(y,\varepsilon)\longrightarrow\R^{n-m+1}$.

Denote by $\xi : B^{m-1}(y,\varepsilon)\to \R$ the first component of  the map $g$ and by $\Gamma_{\xi}^+\subset
B^{m-1}(y,\varepsilon)\times \R $, $\Gamma_{\xi}^-\subset
B^{m-1}(y,\varepsilon)\times \R $ its over and undergraph (that is 
$\Gamma_{\xi}^+=\{(u,v)\in B^{m-1}\times \R: v\geq \xi(u)  \}$ similarly $\Gamma_{\xi}^-$).

\item $S\cap(B^{m-1}(y,\varepsilon)\times {]} x_m-\delta,x_m+\delta{[} \times B^{n-m}(z,\eta))$
is the graph of a semialgebraic $C^r$-map from either $\Gamma_{\xi}^+\cap
(B^{m-1}(y,\varepsilon)\times {]} x_m-\delta,x_m+\delta{[})$ to
$\R^{n-m}$, or  $\Gamma_{\xi}^-\cap
(B^{m-1}(y,\varepsilon)\times {]} x_m-\delta,x_m+\delta{[})$.
\end{enumerate}
Further, for every permutation $\sigma$ of $\{1,\ldots,n\}$,
let us indicate by $\Psi_\sigma(x)$ the formula that says the same thing for the 
image of $x$, $S$ and $T$ by the permutation $\sigma$ of coordinates. We construct
 $\Psi_\sigma(x)$ following the same idea as in the proof of  
 Proposition \ref{variete}, and we take the conjunction of 
$\forall x\in S\setminus T\ \bigvee_\sigma \Phi_\sigma(x)$
and
$\forall x\in T\ \bigvee_\sigma \Psi_\sigma(x)$ with
$\Phi_\sigma(x)$ as in the proof of 
Proposition \ref{variete}. There is a delicate point when we say that we have the graph of a $C^r$-differentiable function over something which is not open (the overgraph). But we can take the coordinate map 
\begin{align}
g: &B^{m-1}\times \R^+ \rightarrow \Gamma_{\xi}^+
\nonumber \\
&\qquad \quad(x,y)\mapsto (x,y+\xi(x)).\nonumber
\end{align}
which identifies 
the overgraph with a half-space and  compute derivatives of this function with respect to these  coordinates , which completes the proof. 
\end{proof}
The following proposition assures  that the fundamental group of a semialgebraic set does not change during a  real closed  extension. 
\begin{pro}(\cite{DK}, Theorem 6.3, p. 270)\label{realcon}\\
Let $X$ be a semialgebraic set in  $R^n$, $x_0\in X$ and $K$
be a real closed extension of $R$.
 The map $ k: \pi_1(X, x_0 )_{alg} \longrightarrow \pi_1(X_K, x_0)_{alg}, $ defined by $k[\gamma]:= [\gamma_K]$ is a 
group  isomorphism.
\end{pro}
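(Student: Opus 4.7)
The plan is to reduce both $\pi_1(X,x_0)_{alg}$ and $\pi_1(X_K,x_0)_{alg}$ to the same purely combinatorial invariant, so that the map $k$ becomes manifestly an isomorphism. The appropriate invariant is the edge-path group of a semialgebraic triangulation of $X$, which by Tarski--Seidenberg extends to a triangulation of $X_K$ given by the \emph{same} simplicial complex.

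First I would apply semialgebraic triangulation to produce $h\colon |L|\to X$, where $L$ is a finite simplicial complex with vertices in $R^N$ and $h$ a semialgebraic homeomorphism. The base change $h_K\colon |L|_K \to X_K$ is a semialgebraic triangulation over $K$ because the defining formulas of the simplices use only the vertex coordinates. Pulling the basepoint back to some vertex $v_0$ of $L$, it suffices to prove that $\pi_1(|L|,v_0)_{alg}\cong E(L,v_0)$ and $\pi_1(|L|_K,v_0)_{alg}\cong E(L,v_0)$, where $E(L,v_0)$ is the classical edge-path group, and then to check that the map $k$ corresponds to the identity on $E(L,v_0)$.

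The identification $\pi_1(|L|,v_0)_{alg}\cong E(L,v_0)$ requires two ingredients: (i) every semialgebraic loop in $|L|$ is semialgebraically homotopic to an edge-path, and (ii) two edge-paths are equivalent under the usual elementary moves if and only if they are semialgebraically homotopic in $|L|$. Both are proved by executing the standard PL/simplicial approximation arguments in the semialgebraic category: one uses iterated barycentric subdivision of the domain together with straight-line homotopies into the carrier simplex. These constructions are piecewise-linear, semialgebraic, and in fact defined over $\mathbb{Q}$, so the same arguments go through verbatim over $R$ and over $K$. The well-definedness of $k$ and its compatibility with concatenation are immediate, since base change $R\rightsquigarrow K$ preserves semialgebraic continuity, endpoints, and concatenation of loops and homotopies.

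The main obstacle is the semialgebraic simplicial approximation theorem used in (i) and (ii): one cannot simply invoke Tarski--Seidenberg, because first-order logic over real closed fields does not allow quantification over semialgebraic maps, so the direct transfer strategy used in Proposition~\ref{homeo} is unavailable. Triangulation plus a careful semialgebraic version of classical simplicial approximation is what makes the argument work; once it is in place, the isomorphism $k$ drops out because the combinatorial object $E(L,v_0)$ is literally the same group whether we read $L$ over $R$ or over $K$.
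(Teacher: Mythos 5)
The paper itself offers no proof of Proposition~\ref{realcon}; it is quoted directly from Delfs--Knebusch \cite{DK}, Theorem~6.3. So your task was genuinely to reconstruct an argument rather than to match one. Your strategy (triangulate, pass to the edge-path group of the simplicial complex, and observe that this combinatorial object is the same whether one reads the complex over $R$ or over $K$) is sound in outline and is in the spirit of the semialgebraic homotopy theory developed in \cite{DK}. Your diagnosis that a naive Tarski--Seidenberg transfer cannot establish surjectivity of $k$ --- because it would require quantifying over loop parameters of unbounded complexity in $K$ --- is exactly the right observation, and it is what makes reduction to a field-independent combinatorial invariant necessary rather than decorative.

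There is, however, a genuine gap in the opening step ``apply semialgebraic triangulation to produce $h\colon |L|\to X$ with $L$ a finite simplicial complex.'' The proposition imposes no compactness hypothesis on $X$, and a non-compact semialgebraic set is not semialgebraically homeomorphic to any compact polyhedron $|L|$. The general semialgebraic triangulation theorem instead gives a finite complex $L$ and a semialgebraic homeomorphism from a \emph{union of open simplices} of $L$ onto $X$; such a union is not a closed polyhedron and has no edge-path group as stated. You need an extra step --- a semialgebraic (in fact PL) strong deformation retraction of this union of open simplices onto a finite closed subcomplex, chosen so that it commutes with base change --- before the edge-path comparison can be run. Two secondary points also deserve more care than ``the same arguments go through verbatim'': (a) the Lebesgue-number step of simplicial approximation uses compactness of $[0,1]$ over $\R$; over a general real closed field one must instead invoke the semialgebraic extreme-value theorem for the distance-to-complement function of the open star cover, which is a closed-and-bounded rather than compactness argument; and (b) the claim that the constructions are ``defined over $\mathbb{Q}$'' overstates matters, since the triangulating homeomorphism $h$ and the vertices of $L$ live over $R$, not $\mathbb{Q}$ --- what is actually needed, and what is true, is that they are semialgebraic over $R$ and hence base-change functorially to $K$.
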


\section{Semialgebraic  h-cobordism theorem over any real closed field}
In this section we prove the existence of a uniform bound on the complexity of the homeomorphism in the semialgebraic h-cobordism  theorem and use this bound to transfer the semialgebraic h-cobordism  theorem over any real closed field.
\begin{defi}\rm Let $R$ be a real closed field. A semialgebraic subset of $R^n$ is said of complexity at most $(p,q)$ if it admits a description as follows
 $$ \bigcup_{i=1}^s\bigcap_{j=1}^{k_i}\{x\in R^n | f_{ij}(x) \ast_{ij}0 \},$$ where $f_{ij}\in R[X_1,...,X_n]$,  and
$\ast_{ij}\in \{<, >, =\}$, $\Sigma _{i=1}^s k_i\leq p$,
$\mbox{deg}(f_{ij}) \leq q$ for $i=1,...,s$ and $j=1,...,r_i$.\\ \rm
The \em{complexity} \rm  of a semialgebraic subset $S$ of
$R^n$ is the smallest couple $(p,q)$, with respect to the
lexicographic order, such that $S$ admits the description above.
\end{defi}
Assume a semialgebraic subset $\mathcal{S}(R)\subset R^k$ is defined for any real closed field $R$. We say that $\mathcal{S}$ is defined \em{uniformly} \rm when there is a first order formula of the theory of real closed fields without parameter which describes $\mathcal{S}(R)$ for every real closed field $R$. In order to check that $\mathcal{S}(R)$ is defined uniformly, it suffices to check that, for any real closed extension $R\subset K$, one has $\mathcal{S}(R)_K= \mathcal{S}(K)$.

Assume a semialgebraic subset $\mathcal{S}(R,n,p,q)\subset R^{\alpha(n,p,q)}$ is defined for every real closed field $R$ and any positive integers $n, p, q.$  Assume moreover that for any $n,p,q$, $\mathcal{S}(R,n,p,q)$ is uniformly defined by a formula without parameter $\Phi_{n,p,q}$: Then we say that  $\mathcal{S}$ is \em{effectively} \rm  defined if there is an algorithm which, given $n,p,q$, produces $\Phi_{n,p,q}$. (Technically, using a G\"odel numbering of formulas, this means that the function which associates to $(n,p,q)$ the G\"odel number of $\Phi_{n,p,q}$ is recursive cf. ([Ma], Chap. VII, § 4, p. 242)). In what follows, we drop the explicit dependence on  $R$ and we write  $\mathcal{S}(n,p,q)$ instead of  $\mathcal{S}(R,n,p,q)$.
\begin{pro}\label{prepa}
There exist a semialgebraic subset $\mathcal{A}(n,p,q)$  in some affine space $R^{\alpha(n,p,q)}$ and a semialgebraic family $\mathcal{S}(n,p,q)\subset\mathcal{A}(n,p,q)\times R^n$  such that:\\
 (i) For every $a\in \mathcal{A}(n,p,q)$ the fiber 
  $$\mathcal{S}_a(n,p,q)=\{x\in R^n: (a,x)\in \mathcal{S}(n,p,q)\}$$
  is a semialgebraic subset of complexity at most $(p,q)$ of $R^n$\\
  (ii) For every semialgebraic subset $S\subset R^n$ of complexity at most  $(p,q)$, there is  $a\in \mathcal{A}(n,p,q)$ such that: $S=\mathcal{S}_a(n,p,q).$\\
 $\mathcal{A}(n,p,q)$ and $\mathcal{S}(n,p,q)$ are defined in a uniform way by  first order formulas of the theory of real closed fields without parameters which can be effectively constructed from $n,p,q$.
\end{pro}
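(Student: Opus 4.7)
The plan is to build $\mathcal{A}(n,p,q)$ by separating the ``continuous'' parameters (polynomial coefficients) from the ``discrete'' parameters (signs and grouping structure). A polynomial in $R[X_1,\ldots,X_n]$ of degree at most $q$ is determined by its $N:=\binom{n+q}{n}$ coefficients, so a $p$-tuple of such polynomials is encoded by a point of $R^{pN}$. The remaining ingredients of a complexity-$(p,q)$ description --- the integer $s\in\{1,\ldots,p\}$, the composition $(k_1,\ldots,k_s)$ with $\sum k_i\le p$, and the signs $*_{ij}\in\{<,>,=\}$ --- together range over a finite set $D(p)$ whose cardinality $M(p)$ depends only on $p$ and is effectively computable from $p$ (essentially a loop over compositions of integers $\le p$ and over strings in $\{<,>,=\}^{p}$).

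Enumerating $D(p)=\{\tau_1,\ldots,\tau_{M(p)}\}$, I would set
\[
\mathcal{A}(n,p,q) := \bigl\{ d\in R : (d-1)(d-2)\cdots(d-M(p))=0 \bigr\}\times R^{pN}\;\subset\;R^{1+pN},
\]
so that $\alpha(n,p,q)=1+pN$. The family $\mathcal{S}(n,p,q)\subset\mathcal{A}(n,p,q)\times R^n$ is defined by a finite disjunction indexed by $k\in\{1,\ldots,M(p)\}$: writing $\tau_k=(s,(k_i)_{i\le s},(*_{ij}))$, the point $(d,c,x)$ belongs to $\mathcal{S}(n,p,q)$ iff $d=k$ and, reading $c\in R^{pN}$ as coefficients of polynomials $f_1,\ldots,f_p$ of degree $\le q$ and setting $f_{i,j}:=f_{k_1+\cdots+k_{i-1}+j}$, one has $x\in\bigcup_{i=1}^{s}\bigcap_{j=1}^{k_i}\{f_{i,j}(x)\,*_{ij}\,0\}$. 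Property (i) then holds tautologically, since every fiber $\mathcal{S}_a(n,p,q)$ is displayed by exactly the recipe of the definition; property (ii) holds because any complexity-$(p,q)$ set admits, by definition, a description of the prescribed form, and this description directly supplies an admissible index $k$ and coefficient vector $c$.

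Uniformity is transparent because the above formula contains no parameter from $R$, only integer constants depending on $(n,p,q)$; hence $\mathcal{A}(n,p,q)_K=\mathcal{A}(K,n,p,q)$ and similarly for $\mathcal{S}$. Effectivity is equally clear: on input $(n,p,q)$ an algorithm computes $N$ and $M(p)$, enumerates $D(p)$, and assembles the first-order formula $\Phi_{n,p,q}$. The only genuinely delicate bookkeeping is writing, inside that formula, the evaluation at $x$ of a polynomial of degree $\le q$ given symbolically by its coefficient vector; this is a fixed polynomial expression in the $N$ coefficients and in $x_1,\ldots,x_n$ whose monomial structure is determined combinatorially by $n$ and $q$ and can therefore be produced mechanically from $(n,q)$, so no obstruction to effectivity arises.
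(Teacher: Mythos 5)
Your construction is correct and follows the same blueprint as the paper's: separate the polynomial coefficients, living in $R^{pN}$, from a finite amount of combinatorial data; encode the combinatorial data as a single coordinate forced into a finite set of integers; and define $\mathcal{S}(n,p,q)$ by a finite disjunction over the combinatorial cases. The only genuine difference is the normal form chosen for the combinatorial part. You parametrize directly by the raw data $(s,(k_1,\ldots,k_s),(*_{ij}))$ of the complexity definition, so each fiber literally displays a complexity-$(p,q)$ description and properties (i)--(ii) hold tautologically. The paper instead first normalizes every complexity-$(p,q)$ set to a union of \emph{sign cells} of $p$ polynomials of degree $\le q$, that is, to $\bigcup_{\sigma\in\Sigma}\bigcap_{i=0}^{p-1}\{x:\mathrm{sign}(f_i(x))=\sigma_i\}$ for some $\Sigma\subset\{-1,0,1\}^p$, and then indexes the $2^{3^p}$ subsets $\Sigma$ by a single integer $l$; this tacitly uses the (routine but unjustified in the text) conversion that a union of basic intersections with at most $p$ atoms of degree $\le q$ can be rewritten as a union of sign cells of those same $p$ polynomials. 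Your version avoids that conversion and is a little more economical; the paper's sign-cell form gives a canonical disjoint decomposition, which is sometimes convenient elsewhere but buys nothing for the present proposition. Either way the same facts are established in the same manner: the fibers have complexity at most $(p,q)$, every such set arises as a fiber, and the defining formula is produced by an algorithm from $(n,p,q)$ with no parameters from $R$.
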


\begin{proof}

Let us first give a description of the fibers of $\mathcal{S}(n,p,q)$ which allow us to show that their union is  semialgebraic set.\\
We start with a set of $p$ polynomials of degree $\leq q$. Let us call
$f_0,...,f_{p-1}$ the polynomials. A system of sign conditions  over these polynomials is given by an element  $\sigma \in
\{-1,0,1\}^p$.  This system of signs condition   is satisfied in the set
$$\bigcap_{i=0}^{p-1}\{x\in R^n: \, \mbox{sign}(f_i(x))=\sigma_i \}.$$
We will show that a semialgebraic in $R^n$ of complexity at most $(p,q)$
can be described by a boolean combination of sign conditions over  $p$ polynomials in $n$ variables  of degree $\leq q$,
that is,  it can be written in the following form:
$$\bigcup_{\sigma\in\Sigma}\bigcap_{i=0}^{p-1}\{x\in R^n: \, \mbox{sign}(f_i(x))=\sigma_i \},$$
where $\Sigma$ is a subset of $\{-1,0,1\}^p.$\\
Let us index  the subsets of $\{-1,0,1\}^p$ by the integers 
$l$ starting form $0$ to $2^{3^p}-1.$
Now, we describe the space of parameters. To do it, we introduce 
the notation $f_a$ to indicate the  polynomial in $n$
variables of degree $\leq q$ where the list of the coefficients of the monomials 
of $f$ ordered with respect to  the lexicographic order is $a\in R^N$ with  $N=\left(
\begin{array}{c}
n+q\\
q
\end{array}
\right)$. Consider
$$(a_0,...,a_{p-1},l) \in (R^N)^p\times \{0,...,2^{3^p}-1 \}.$$
The semialgebraic set of  $R^n$ corresponding to this parameter is
$$\bigcup_{\sigma\in\Sigma[l]}\bigcap_{i=0}^{p-1}\{x\in R^n: \, \mbox{sign}(f_{a_i}(x))=\sigma_i\}.$$
 We can then describe  $\mathcal{S}(n,p,q)$ by the following formula in 
$(a_0,...,a_{p-1},l,x)\in (R^N)^p\times \{0,...,2^{3^p}-1 \}\times
R^n $:
$$\Phi_{n,p,q}(a_0,...,a_{p-1},l,x)=\bigvee_{\sigma\in\Sigma[l]}\left(\bigwedge_{i=0}^{p-1} \, \mbox{sign}(f_{a_i}(x))=\sigma_i
\right).$$ So, we have  
$$\mathcal{S}(n,p,q)=\bigcup_{l=0}^{2^{3^p}-1}\{(a_0,...,a_{p-1},l,x)\in (R^N)^p\times R\times R^n: \Phi_{n,p,q}(a_0,...,a_{p-1},l,x)\}.$$ As defined, $\mathcal{S}(n,p,q)$ is a semialgebraic subset of $(R^N)^p\times R\times R^n$. The set
$\mathcal{A}(n,p,q)=\bigcup_{l=0}^{2^{3^p}}(R^N)^p\times \{l\}\subset(R^N)^p\times R$ gives us the space of 
parameters of the semialgebraic subsets of  $R^n$ of complexity at most
$(p,q)$. Then, one 
obtains effectively for any real closed field that the space of parameters
$\mathcal{A}(n,p,q)$ is a semialgebraic subset of
$(R^N)^p\times R$.
\end{proof}

If $\mathcal{S}_a$ is the semialgebraic set parametrized by $a\in \mathcal{A}(n,p,q)$ by abuse of notation we will write $\mathcal{S}_a\in \mathcal{A}(n,p,q)$.  
Let us recall   the definition of semialgebraic trivialisation of a semialgebraic map.
\begin{defi}\rm
A continuous semialgebraic map $f:A\longrightarrow B$ 
is said to be \em{semialgebraically trivial over a semialgebraic subset $C\subset B$} \rm 
if there  is a semialgebraic set $F$ and a semialgebraic homeomorphism $h:
f^{-1}(C)\longrightarrow C\times F$, such that the composition of $h$
with the  projection $C\times F\rightarrow C$ is equal to the restriction of 
 $f$ to $f^{-1}(C)$. This is shown by the following commutative diagram: $$\begin{CD}
A\supset f^{-1}(C) @>h>> C\times F\\
@VVfV @VVpr_1V\\
B\supset C @>=>> C
\end{CD}.$$
The homeomorphism $h$ is called a \em{ semialgebraic trivialisation
} \rm of $f$ over $C$.
\newline
We say that the trivialisation $h$ is compatible with a subset 
 $D\subset A$ if there is a subset 
$G\subset F$ such that $h(D\cap
f^{-1}(C))= C\times G$.
\end{defi}
We can now state  Hardt's theorem. A detailed proof which works  over any real  closed in field can be found in  [BCR, p.221].
\begin{teo} 
Let $A\subset R^n$, $B\subset R^m$ be two semialgebraic sets and $f:A\longrightarrow B$ a semialgebraic map. There is a finite semialgebraic partition of
$B=\cup_{i=1}^k B_i$ such that  $f$ is semialgebraically  trivial
over each $B_i$. Moreover, if $A_1,...,A_h$ are finitely many semialgebraic subsets of  $A$, we can ask  each 
trivialisation $h_i$ to be compatible with all $A_j$.
\end{teo}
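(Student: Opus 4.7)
The plan is to prove Hardt's theorem by using cylindrical algebraic decomposition (CAD), which is available over any real closed field. The idea is to pass from $f : A \to B$ to its graph $\Gamma_f \subset R^n \times R^m$, so that the projection $\pi_2 : \Gamma_f \to B$ (composed with the identification $\Gamma_f \cong A$) becomes the restriction of the linear projection $R^n \times R^m \to R^m$. One then builds the trivialization fibrewise out of the cylindrical structure.

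First I would choose a CAD of $R^n \times R^m$ that is cylindrical with respect to the projection onto the last $m$ coordinates and adapted to the finite list of semialgebraic sets $\Gamma_f$, $R^n \times B$, and $\Gamma_f \cap (A_j \times R^m)$ for $j = 1, \dots, h$. Taking images under $\pi_2$ of those cells that lie in $R^n \times B$ and project to cells contained in $B$, one obtains a finite semialgebraic partition $B = \bigcup_{i=1}^k B_i$. Over each $B_i$ the cylindrical structure provides a finite list of continuous semialgebraic sections $\xi_{i,1}(b) < \xi_{i,2}(b) < \cdots < \xi_{i,r}(b)$ (alternating with open bands in the first coordinate), and inductively on $n$, such sections on each intermediate projection, which describe how $\Gamma_f \cap (R^n \times B_i)$ is assembled as a finite union of cylindrical cells over $B_i$.

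Next I would construct the trivializations $h_i : f^{-1}(B_i) \to B_i \times F_i$ inductively on $n$, where $F_i$ denotes the fibre over a chosen base point $b_i \in B_i$. For $n = 1$ one uses the sections $\xi_{i,l}$ to parametrize each fibre piecewise affinely, sending a point in the $l$-th band over $b$ to the corresponding point of the $l$-th band over $b_i$ with the same affine coordinate between the enclosing sections (and proceeding by the obvious translation in semi-infinite bands). For general $n$ one first trivializes the last coordinate in this way, and then applies the inductive hypothesis to the projection onto $R^{n-1} \times B_i$, composing the two homeomorphisms. Because the CAD was adapted to each $A_j$, the trace $A_j \cap f^{-1}(B_i)$ is a union of cylindrical cells, each preserved fibrewise by the construction, which yields $h_i(A_j \cap f^{-1}(B_i)) = B_i \times (A_j \cap F_i)$, giving the required compatibility.

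The main obstacle is verifying that the piecewise construction produces a \emph{continuous} semialgebraic map across the whole cell $B_i$ rather than only fibrewise: two fibres must be glued through the limits of the sections $\xi_{i,l}$ along $B_i$. The key point is the standard continuity property of CAD sections on each cell of the adapted decomposition, together with the fact that adjacent cylindrical cells share a common bounding section; this forces the fibrewise maps to agree at the boundaries and to depend continuously on $b \in B_i$, so that $h_i$ and its inverse are continuous semialgebraic bijections, i.e.\ semialgebraic homeomorphisms. All constructions being elementary (boolean combinations of polynomial conditions, sign determinations, continuous sections), the argument is valid over an arbitrary real closed field, as required.
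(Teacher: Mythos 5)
The paper does not prove this theorem at all: it simply cites [BCR, Th.~9.3.2] for a proof valid over any real closed field, so there is no internal argument to compare against. As for your CAD plan, the gap is exactly the continuity issue you flag and then wave away. The principle that ``adjacent cylindrical cells share a common bounding section'' yields continuity only for cells lying over the \emph{same} base cell of the intermediate projection. When two cylindrical cells of $\Gamma_f$ sit over two distinct but adjacent cells of the projection to $R^{n-1}\times B_i$ (one in the frontier of the other), the sections bounding the larger cell in general do \emph{not} extend continuously to the smaller one, and the fibrewise affine interpolation jumps across that interface.

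A concrete instance: take $m=1$, $n=2$, $B=(0,1)$, $f$ the projection $(x_1,x_2,y)\mapsto y$, and
$$A=\{(x_1,x_2,y): 0<y<1,\ 0<x_2<1,\ 0<x_1,\ x_1x_2<y\}\ \cup\ \{(x_1,0,y): 0<y<1,\ 0<x_1<1\}.$$
The CAD adapted to $A$ (eliminating $x_1$, then $x_2$, then $y$) produces, over $B_i=(0,1)$, the cell $C_1$ with upper $x_1$-section $\xi(x_2,y)=y/x_2$ above the $x_2$-cell $(0,1)$, and the cell $C_2$ with upper $x_1$-section $1$ above the $x_2$-cell $\{0\}$. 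Your interpolation to the fibre over $y_0=1/2$ gives $x_1\mapsto x_1/(2y)$ on $C_1$ and $x_1\mapsto x_1$ on $C_2$; letting $x_2\to 0^+$ at fixed $(x_1,y)$ with $y\ne 1/2$ shows the resulting map is not continuous on $f^{-1}(B_i)$. Thus raw CAD gives a fibrewise bijection but not a semialgebraic homeomorphism. Repairing this requires substantial extra input --- a frontier condition or good-direction argument that makes the relevant sections extend continuously to cell closures, or a passage through a (semialgebraic) isotopy-type lemma as in [CS1] --- and that is precisely the hard content of the proof in [BCR]; it is not supplied by the CAD alone.
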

\begin{remark}\rm
Let $a$ and $b$ be any two elements of the same $B_i$ then, one gets that $f^{-1}(a)$ and $f^{-1}(b)$
are semialgebraically  homeomorphic.
\end{remark}

\begin{pro}\label{sdf}
Given the integers $n$, $p$ and $q$, there  exists a couple
of integers $(t,u)$ such that for every couple of semialgebraic sets of complexity at most $(p,q)$ which are 
semialgebraically homeomorphic, there is a semialgebraic 
homeomorphism $f$ between them whose graph $\Gamma_f \in
\mathcal{A}(2n,t,u)$.
\end{pro}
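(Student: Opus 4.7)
The plan is to apply Hardt's trivialization theorem to the universal family $\pi : \mathcal{S}(n,p,q) \to \mathcal{A}(n,p,q)$ of Proposition \ref{prepa}, and thereby reduce the construction of a semialgebraic homeomorphism between two arbitrary complexity-$(p,q)$ sets to finitely many fixed ``model'' cases whose complexities can be gathered into a single bound $(t,u)$.

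First I would apply Hardt's theorem to $\pi$, obtaining a finite semialgebraic partition $\mathcal{A}(n,p,q)=B_1\sqcup\cdots\sqcup B_k$, semialgebraic model fibres $F_1,\dots,F_k\subset R^n$, and semialgebraic trivializations $h_i : \pi^{-1}(B_i)\to B_i\times F_i$. For any $a\in B_i$, the restriction $h_i^a : \mathcal{S}_a\to F_i$ is a semialgebraic homeomorphism whose graph is cut out by the formula defining $h_i$ with $a$ plugged in as a parameter. Next, for each of the finitely many pairs $(i,j)$ for which $F_i$ and $F_j$ happen to be semialgebraically homeomorphic, I fix once and for all a semialgebraic homeomorphism $g_{ij}:F_i\to F_j$; each $g_{ij}$ has some fixed finite complexity.

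Now let $\mathcal{S}_a,\mathcal{S}_b$ be any two semialgebraically homeomorphic complexity-$(p,q)$ sets with $a\in B_i$ and $b\in B_j$. Then $F_i\cong \mathcal{S}_a\cong \mathcal{S}_b\cong F_j$, so $g_{ij}$ is defined, and I set
$$f_{a,b} \;=\; (h_j^b)^{-1}\circ g_{ij}\circ h_i^a : \mathcal{S}_a\longrightarrow \mathcal{S}_b,$$
which is a semialgebraic homeomorphism between $\mathcal{S}_a$ and $\mathcal{S}_b$.

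The last step, and the main obstacle, is to bound the complexity of $\Gamma_{f_{a,b}}\subset R^{2n}$ uniformly in $a,b$. By construction $\Gamma_{f_{a,b}}$ is defined by a first order formula obtained by plugging the parameters $a,b$ into a formula built from the fixed data $\{h_i\}$ and $\{g_{ij}\}$, with two existential quantifiers (for the intermediate points in $F_i$ and $F_j$). Since the number of pairs $(i,j)$ is finite and the formulas describing the model data have fixed complexity, the uniform Tarski--Seidenberg bounds for quantifier elimination over real closed fields produce a couple $(t,u)$ depending only on $(n,p,q)$ such that $\Gamma_{f_{a,b}}\in \mathcal{A}(2n,t,u)$ for every admissible pair, as required.
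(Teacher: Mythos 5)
Your proposal is correct and follows essentially the same route as the paper: apply Hardt's trivialization theorem to the universal family $\Pi\colon\mathcal{S}(n,p,q)\to\mathcal{A}(n,p,q)$, fix one homeomorphism between each pair of mutually homeomorphic model fibres, and produce the desired map as a composition of the two fibrewise restrictions of the trivializations with the fixed model homeomorphism. The only cosmetic difference is that you invoke effective quantifier elimination to bound the complexity of the composed graph, whereas the paper simply observes directly that each factor in the composition has complexity bounded independently of the parameters $a,b$; both routes to the final bound are sound.
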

\begin{proof} Consider the following projection :
\begin{align}
\Pi:\, &R^{pN+1}\times R^n \rightarrow R^{pN+1}
\nonumber \\
&\qquad \quad (a,x)\mapsto a \nonumber
\end{align}
 with $a\in R^{pN+1}$ and $x\in R^n$. We have that
$\mathcal{S}(n,p,q) = \{(a,x)\in \mathcal{A}(n,p,q)\times R^n: x\in
\mathcal{S}_a\}$ where $\mathcal{S}_a$ is a semialgebraic subset of $R^n$
parametrized by  $a \in \mathcal{A}(n,p,q)$. The set
$\mathcal{S}(n,p,q)$ is a  semialgebraic subset of
$R^{pN+1}\times R^n$ (see the proof of Lemma \ref{prepa}). The
projection  $\Pi_{|\mathcal{S}(n,p,q)}:
\mathcal{S}(n,p,q)\rightarrow \mathcal{A}(n,p,q)$ is a semialgebraic map.
 By the  Hardt trivialisation theorem, applied to the  semialgebraic map
$\Pi_{|\mathcal{S}(n,p,q)}$, there exists a finite semialgebraic partition of
$\mathcal{A}(n,p,q)$ in  $S_i$:
$\mathcal{A}(n,p,q)=\bigcup\limits_{i=1}^s S_i$ such that for each $i$, there exists a semialgebraic subset
 $X_i$ and a semialgebraic homeomorphism 
  $h_i$ such that the following diagram commutes:
$$\begin{CD}
 \Pi^{-1}(S_i) @>h_i>> S_i\times X_i\\
@VV\Pi V @VVpr_1V\\
S_i @>=>> S_i
\end{CD}.$$ 
 As the number of  trivialisation homeomorphisms is finite, let us take maximum $(u,v)$ of their complexity.  We choose a  representative in each
$S_i$, $i\in \{1,...,s\}$ and take for $X_i$ the corresponding semialgebraic set. Assume $X_{i_1}$ to be semialgebraically homeomorphic  to $X_{i_2}$ for some $i_1,\,i_2\in\{1,...,s\}$.
There is a couple of integers $(t_{i_1 i_2},u_{i_1 i_2})$ such that
there exists a semialgebraic  homeomorphism $f:X_{i_1}
\longrightarrow X_{i_2}$ whose graph belongs to
$\mathcal{A}(2n,t_{i_1 i_2},u_{i_1 i_2})$. Let $X$ and $Y$ be two semialgebraic sets 
belonging to $\mathcal{A}(n,p,q)$ such that they are semialgebraically  
 homeomorphic. Then there are $i_1$ and $i_2$ such that  $X=\mathcal{S}_a$ with  $a\in S_{i_1}$, 
  $Y=\mathcal{S}_b$ with  $b\in S_{i_2}$and  $X_{i_1}$,  $X_{i_2}$ are semialgebraically homeomorphic by $f$ as before. It follows that $X$ is semialgebraically 
 homeomorphic to $X_{i_1}$ by the trivialization homeomorphism, the same for $Y$ and $X_{i_2}$. We have more precisely: ${h_{i_1}}_{|X}:  X\longrightarrow  X_{i_1}$  defined by $(a,{h_{i_1}}_{|X}(x))=h_{i_1}(a, x).$ We get here that the complexity of this restriction is bounded by $(u,v)$ independently of $X$. And for $Y$, we have the semialgebraic homeomorphism  ${h_{i_2}}_{|Y}:  Y\longrightarrow  X_{i_2}$ defined by $(b,{h_{i_2}}_{|Y}(x))=h_{i_2}(b, x)$. Consequently this  homeomorphism has a complexity bounded by $(u,v)$, independently of $Y$. Hence we get  an homeomorphism from $X$ to $Y$ by $g= {h_{i_2}}_{|Y} ^{-1} \circ f \circ {h_{i_1}}_{|X}.$  The complexity of  $g$ is bounded by
$(t'_{i_1i_2},u'_{i_1i_2})$ independently of  $X$ and $Y$, since it is a composition of semialgebraic  homeomorphisms with complexity bounded independently of  $X$ and $Y$, and depends only on $i_1$ and  $i_2\in \{1,...,s\}$. Set
$$E=\{ (i,j)\in\{1,...,s\}^2 |\,\textrm{  $X_i$  and   $X_j$  \,  are semialgebraically  homeomorphic \}}.$$
This set is finite. Then, take
$(t, u)=\left( \max\limits_{(i,j)\in E}(t'_{i j}), \max\limits_{(i,j)\in E}(u'_{i j}) \right).$
\end{proof}

We can define the complexity of a semialgebraic cobordism.
\begin{defi}\rm  Let $(M,M_0,M_1)$ be a semialgebraic  cobordism such that the semialgebraic manifolds $M$, $M_0$ and $M_1$ have respective complexities $(t,u)$, $(t_0,u_0)$ and $(t_1,u_1)$. The \em{complexity} \rm  of  the cobordism $(M, M_0, M_1)$ is  $$(v,w)= (\max(t,t_0,t_1), \max(u,u_0,u_1)).$$

\end{defi}

The following theorem gives uniform bound for the h-cobordism theorem.  
\begin{teo}\label{bu}
Given $n,m\geq 6$, $(p,q)\in \N^2$, there exists  $(t,u)=\Psi_{HC}(n,m,p,q)$ in $\N^2$ such that for all simply connected semialgebraic h-cobordism $(M,M_0,M_1)$ in $\R^n$ of complexity at most $(p,q)$   and \rm $\mbox{dim}M=m$, \em{there exists a semialgebraic  homeomorphism $f:M\rightarrow M_0\times [0,1]$ whose graph $\Gamma_f\in \mathcal{A}(2n+1, t,
u).$}
\end{teo}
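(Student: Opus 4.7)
The plan is to mimic the proof of Proposition \ref{sdf}, applied to the restricted class of simply connected semialgebraic h-cobordisms rather than to arbitrary pairs of homeomorphic semialgebraic sets. First, I would use Proposition \ref{prepa} to parameterize triples of semialgebraic subsets of $\R^n$ of complexity at most $(p,q)$ by points of $\mathcal{A}(n,p,q)^3$, and define $\mathcal{H}(n,m,p,q)\subset\mathcal{A}(n,p,q)^3$ as the subset of parameters $(a,a_0,a_1)$ for which $(\mathcal{S}_a,\mathcal{S}_{a_0},\mathcal{S}_{a_1})$ is a simply connected semialgebraic h-cobordism with $\dim\mathcal{S}_a=m$. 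The first main task is to check that $\mathcal{H}(n,m,p,q)$ is itself semialgebraic: the manifold-with-boundary and dimension conditions become first order by Propositions \ref{variete} and \ref{bord}; disjointness of $\mathcal{S}_{a_0}$ and $\mathcal{S}_{a_1}$ is trivially first order; and for the remaining conditions --- existence of semialgebraic deformation retracts and simple connectedness --- I would quantify over parameter spaces $\mathcal{A}(\cdot,p',q')$ for a suitable $(p',q')$ whose existence follows from a preliminary Hardt-style bound in the spirit of Proposition \ref{sdf}.

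Next, I would consider the total family
\[
\mathcal{T}=\{((a,a_0,a_1),x):(a,a_0,a_1)\in\mathcal{H}(n,m,p,q),\ x\in\mathcal{S}_a\}
\]
together with the subfamilies indexed by $\mathcal{S}_{a_0}$ and $\mathcal{S}_{a_1}$, and apply Hardt's trivialisation theorem to the projection $\mathcal{T}\to\mathcal{H}(n,m,p,q)$ with compatibility with these subfamilies. This yields a finite partition $\mathcal{H}(n,m,p,q)=\bigsqcup_{i=1}^k S_i$ over each piece of which the whole cobordism structure trivialises; every cobordism parameterised by a fixed $S_i$ is semialgebraically homeomorphic to a chosen representative $(M^{(i)},M_0^{(i)},M_1^{(i)})$ via homeomorphisms of complexity bounded by some $(\alpha,\beta)$ depending only on $n,m,p,q$.

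For each $i$, Theorem \ref{hcob} supplies a semialgebraic homeomorphism $f_i:M^{(i)}\to M_0^{(i)}\times[0,1]$ of some complexity $(\gamma_i,\delta_i)$. Given an arbitrary h-cobordism $(M,M_0,M_1)$ whose parameter lies in $S_i$, writing $\varphi:M\to M^{(i)}$ and $\psi:M_0\to M_0^{(i)}$ for the homeomorphisms extracted from the Hardt trivialisation, the composition
\[
f=(\psi^{-1}\times\mathrm{id}_{[0,1]})\circ f_i\circ\varphi:M\longrightarrow M_0\times[0,1]
\]
is a semialgebraic homeomorphism whose complexity is bounded in terms of $(\alpha,\beta,\gamma_i,\delta_i)$. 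Its graph lives in $\R^n\times\R^n\times\R=\R^{2n+1}$, so taking the maximum of these bounds over the finitely many pieces $i\in\{1,\dots,k\}$ produces the desired $(t,u)=\Psi_{HC}(n,m,p,q)$ with $\Gamma_f\in\mathcal{A}(2n+1,t,u)$ uniformly.

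The hard part will be the first step: showing that $\mathcal{H}(n,m,p,q)$ is semialgebraic. Translating ``simply connected'' and ``admits a semialgebraic deformation retract'' into first order formulas is delicate because the witnessing loops, homotopies and retractions carry no a priori complexity bound; such a bound must be produced by a preliminary Hardt argument, iterating the strategy of Proposition \ref{sdf}, before these existential statements can be expressed in the theory of real closed fields.
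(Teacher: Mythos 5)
Your overall architecture (Hardt trivialisation of a parameter family, transport a fixed reference homeomorphism across each piece, take a maximum over finitely many pieces) matches the paper's proof. Where you and the paper genuinely diverge is in how the semialgebraicity of the parameter set of simply connected h-cobordisms is established, and you have correctly flagged this as the sticking point --- but your proposed fix is not what the paper does and is more precarious than necessary.

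You propose to first prove that $\mathcal{H}(n,m,p,q)$ is semialgebraic by writing \virg{is an h-cobordism} and \virg{is simply connected} as first-order formulas, quantifying witnessing retractions and homotopies over a parameter space $\mathcal{A}(\cdot,p',q')$ for some $(p',q')$ supplied by a \virg{preliminary Hardt argument}, and only afterwards to apply Hardt to the family over $\mathcal{H}(n,m,p,q)$. The paper instead applies Hardt once, to the projection over the larger set $\mathcal{C}ob(n,m,p,q)$ of \emph{all} cobordisms of complexity at most $(p,q)$ (whose semialgebraicity is immediate from Propositions~\ref{variete} and~\ref{bord}, since the defining conditions are local differential ones). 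Over each Hardt piece $H_i$ the fiber triple is constant up to semialgebraic homeomorphism, and \virg{simply connected h-cobordism} is a topological property of the fiber, hence constant on each piece. So $\mathcal{H}cob(n,m,p,q)=\bigcup_{j\in J}H_j$ for a suitable finite $J\subset\{1,\dots,s\}$, and its semialgebraicity is an automatic byproduct of the partition --- no first-order formula for simple connectedness or for existence of a deformation retract is ever written down, and no a priori bound on the complexity of loops, homotopies or retractions is needed. Uniformity over real closed fields is then secured not by a bounded existential formula but by the invariance results (Propositions~\ref{realcon} and~\ref{con}), and this is precisely the step where effectiveness is lost, since deciding which indices belong to $J$ is not algorithmic.

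Your route is not hopeless --- one could in principle run Hardt on $\mathcal{C}ob$, extract complexity bounds for retractions a posteriori, and feed them back into a first-order definition of $\mathcal{H}$ --- but this is circular (the bound you want is produced by the very Hardt partition you then apply again) and buys you nothing. Adopting the paper's post-hoc selection makes the \virg{hard part} you identified disappear; you would then only need to explain, as the paper does, that $\mathcal{H}cob$ is uniformly but not effectively defined.
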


\begin{proof}
To prove the existence of the uniform bound $(t,u)$, we will first construct a set of parameters of semialgebraic  h-cobordisms in $\R^n$ with complexity at most $(p,q)$ and semialgebraically simply connected. We need  to translate the fact of being:
\begin{quote}
\virg{\it{a semialgebraic h-cobordism in $\R^n$ of complexity at most $(p,q)$ simply connected}},
\end{quote}
 into a first order formula of the theory of real closed fields.
 
Indeed, the fact that a semialgebraic subset of  $\R^n$ is a semialgebraic submanifold
of $\R^n$ of dimension $m$ can be said by  a first order formula of the theory of real closed fields
  (see Proposition \ref{variete} $(ii)$). Which implies that
the set of semialgebraic submanifolds of  $\R^n$
of dimension $m$ and with complexity at most $(p,q)$ is a semialgebraic subset of 
 $\mathcal{A}(n, p, q)$. Let us denote it by 
 $\mathcal{B}(n, m, p, q)$. So, it is defined by  a first order 
 formula of the theory of real closed fields  in
  in a uniform and  effective way.

The conditions which must satisfy a   triplet of semialgebraic manifolds\\
 $(M,M_0,M_1)$ to be a cobordism can be translated to a conjunction of 
 first order formulas of the theory of real closed fields with coefficients in
$\mathbb{Z}$.
Then the set of elements
$(a,\,b,\,c)\in \mathcal{B}(n, m, p, q)\times \mathcal{B}(n, m-1,
p, q)^2$ such that $(M_a, M_b, M_c)$ is a cobordism is a 
semialgebraic subset of  $\R^{3N + 3}$. This set parametrizes 
the semialgebraic cobordisms with complexity at most
 $(p, q)$ and we denote it by $\mathcal{C}ob(n, m, p, q).$ It is defined uniformly and effectively.\\
There is a  semialgebraic family 
$\mathcal{C}(n,m,p,q)\subset \mathcal{C}ob(n, m, p,
q)\times \R^n$ with two subfamilies $\mathcal{C}_0(n,m,p,q)\subset \mathcal{C}(n,m,p,q)$ and $\mathcal{C}_1(n,m,p,q)\subset \mathcal{C}(n,m,p,q)$ such that:
\begin{itemize}
\item For  every $b\in \mathcal{C}ob(n,m,p,q)$, the fiber 
$$\mathcal{C}_b(n,m,p,q)=\{x\in \R^n: (b,x)\in \mathcal{C}(n,m,p,q)\}$$ is a semialgebraic manifold of $\R^n$ of dimension $m$ of complexity at most $(p,q)$ with boundary the disjoint union of the fiber $\mathcal{C}_{0,b}(n,m,p,q)$ and $\mathcal{C}_{1,b}(n,m,p,q).$
\item For every semialgebraic cobordism $(M,M_0,M_1)$, $M\subset \R^n$ of dimension $m$ and complexity at most $(p,q)$, there exists $b\in \mathcal{C}ob(n,m,p,q)$ such that:
$$M=\mathcal{C}_b(n,m,p,q),\, \, \, M_0= \mathcal{C}_{0,b}(n,m,p,q),\,\,\, M_1= \mathcal{C}_{1,b}(n,m,p,q).$$
\end{itemize}
The families $\mathcal{C}(n,m,p,q)$, $\mathcal{C}_0(n,m,p,q)$ and $\mathcal{C}_1(n,m,p,q)$ are defined uniformly and effectively.
Consider the projection defined by:
\begin{align}
\Pi:\, &\mathcal{C}(n,m,p,q)\rightarrow \mathcal{C}ob(n, m, p,
q)
\nonumber \\
&\qquad \quad (a,x)\mapsto a. \nonumber
\end{align}
 Since $\Pi$ is a semialgebraic map, by  Hardt Theorem, there 
exists a finite semialgebraic  partition of  $\mathcal{C}ob(n,
m, p, q)= \bigcup\limits_{i =1}^{s}H_i$, compatible with the subfamilies\\ $\mathcal{C}_0(n,m,p,q)$ and $\mathcal{C}_1(n,m,p,q)$, such that  for all $i$ there exists a 
semialgebraic homeomorphism of trivialisation
 $\Pi_i:\Pi^{-1}(H_i)\rightarrow H_i\times C_i$ where $C_i= (C_i,{C_i}_0, {C_i}_1)$ is a semialgebraic h-cobordism. Assume $\Pi_i$ of complexity at most $(t_i,u_i)$.\\ Then, there is $J\subset \{1,...,s\}$  
  such that the union $\mathcal{H}cob(n, m, p, q)=\bigcup\limits_{j\in J}H_j$
  parametrizes the set of simply connected semialgebraic h-cobordisms  of complexity at most 
 $(p, q).$ This set is a semialgebraic.\\
We lose exactly here the effectiveness  because the problem of deciding  which semialgebraic cobordisms are  simply connected h-cobordisms is not effective (cf. [VKT]).\\
On the other hand the space of parameters $\mathcal{H}cob(n, m, p, q)$ is uniformly defined since the property of being semialgebraically  simply connected is invariant under extension of real closed fields (Proposition \ref{realcon}). Moreover, over $\R$  semialgebraic simple connectedness is the same as  topological simple connectedness (Proposition \ref{con}).\\
Let $(M,M_0,M_1)$ be a semialgebraic simply connected h-cobordism with a parameter $a\in \mathcal{H}cob(n, m, p, q)$, then there exists $j\in J$ such that $a\in H_j$. Hence, $\Pi_j|_M:M\rightarrow C_j$ is a semialgebraic homeomorphism with complexity at most $(t_j,u_j)$.
 ${C_j}_0\times [0,1]$ has a complexity bounded in terms$(p, q)$ in an effective way. Since $C_j$ and ${C_j}_0\times I$ are semialgebraically homeomorphic (Theorem \ref{hcob}),
then by  Proposition \ref{sdf}, there exists a couple of integers
$(v, w)$ which depends only on $n, p, q$ such that there exists a semialgebraic
homeomorphism $f_j: C_j \longrightarrow {C_j}_0\times I$
whose graph $\Gamma_f$ admits a complexity at most $(v,w)$. 
So we have the following semialgebraic homeomorphism:
$g_j=((\Pi_j|_{M_0})^{-1}\times id_I)\circ f_j\circ \Pi_j|_M: M \rightarrow M_0\times I.$
 We get that there exists  a bound on the complexity of $g_j$, write $(t'_j,u'_j)$, which depends only on $j$ and not on $(M,M_0,M_1)$.  Take  $$(t,u)=(\max_{j\in J}(t'_j),\max_{j\in J}(u'_j))$$   and this ends the proof.
\end{proof}
As we pointed out in the proof of the above theorem there is a precise point where we loose effectiveness even if we get uniform bounds. We shall look at this question in the next section.\\
We give now the semialgebraic  h-cobordism theorem over any real closed field.
Note that by \em{compact} \rm we mean \em{closed} \rm  and  \em{bounded}\rm.
\begin{teo}\label{hval}
Let $(M, M_0, M_1)$ be a semialgebraically simply connected semialgebraic h-cobordism defined over a real closed field
 $R$. If  \rm $\mbox{dim}\, M \geq 6$,
\em{then $M$ is semialgebraically homeomorphic to $M_0\times [0,1].$}
\end{teo}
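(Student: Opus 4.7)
The plan is to transfer Theorem \ref{hcob} from $\R$ to an arbitrary real closed field $R$ via the Tarski-Seidenberg principle, using the uniform complexity bound of Theorem \ref{bu}. I would fix complexity parameters $n,m,p,q$ with $m\geq 6$ and assume $(M,M_0,M_1)\subset R^n$ is a simply connected semialgebraic h-cobordism of dimension $m$ and complexity at most $(p,q)$. By the construction carried out in the proof of Theorem \ref{bu} the cobordism is encoded by a parameter $b\in \mathcal{H}cob(n,m,p,q)$ so that $M=\mathcal{C}_b$, $M_0=\mathcal{C}_{0,b}$, $M_1=\mathcal{C}_{1,b}$, and the parameter space $\mathcal{H}cob(n,m,p,q)$ is \emph{uniformly} defined (without parameters) in the theory of real closed fields: this uses Propositions \ref{variete} and \ref{bord} to express the manifold-with-boundary condition, and Proposition \ref{realcon} to translate semialgebraic simple connectedness into a first order property invariant under real closed extensions.

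Next I would introduce, for each quadruple $(n,m,p,q)$, the first order sentence
\begin{equation*}
\Phi_{n,m,p,q}:\quad \forall b\in \mathcal{H}cob(n,m,p,q)\ \exists \gamma\in \mathcal{A}(2n+1,\,\Psi_{HC}(n,m,p,q))\ \mu(b,\gamma),
\end{equation*}
where $\mu(b,\gamma)$ expresses that $\gamma$ is the graph of a semialgebraic homeomorphism from $\mathcal{C}_b$ onto $\mathcal{C}_{0,b}\times [0,1]$; such a formula is of the same nature as the formula $\lambda$ built in the proof of Proposition \ref{homeo}. Since every ingredient is uniformly definable, $\Phi_{n,m,p,q}$ is a bona fide sentence of the pure theory of real closed fields, with no parameters.

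Over $R=\R$, Theorem \ref{hcob} combined with Theorem \ref{bu} asserts exactly that $\Phi_{n,m,p,q}$ is true. The theory of real closed fields being complete, the Tarski-Seidenberg principle forces $\Phi_{n,m,p,q}$ to hold in every real closed field, and in particular in our given $R$. Specializing this $\forall\exists$ statement to our distinguished parameter $b$ yields a semialgebraic homeomorphism $M\to M_0\times[0,1]$ defined over $R$, which is what we want.

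The hard part is not a new piece of mathematics but a careful bookkeeping verification: every predicate entering $\Phi_{n,m,p,q}$ --- being a semialgebraic (or Nash) manifold, having prescribed boundary, being semialgebraically simply connected, and being the graph of a semialgebraic homeomorphism between two parametrized fibers --- must admit a uniform, parameter-free first order formulation. This was the entire purpose of Section 2, so the check reduces to invoking the propositions listed above; the genuinely delicate point is the invariance of semialgebraic simple connectedness under real closed extension (Proposition \ref{realcon}), which is precisely what allows a topological notion meaningful a priori only over $\R$ to be transported faithfully to an arbitrary $R$.
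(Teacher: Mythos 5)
Your argument is correct and is essentially identical to the paper's own proof: you fix the complexity data $(n,m,p,q)$, invoke the uniform bound of Theorem \ref{bu}, package the statement as a parameter-free $\forall\exists$ first-order sentence of RCF (using $\mathcal{H}cob(n,m,p,q)$, the definability results of Section 2, and a Proposition~\ref{homeo}-type formula for the homeomorphism condition), verify it over $\R$ via Theorem \ref{hcob}, and transfer it by Tarski--Seidenberg. The only differences from the paper are cosmetic (the paper names the homeomorphism condition as a semialgebraic set $\mathcal{G}(n,p,q,t,u)$ rather than as a formula $\mu$), so no substantive comment is needed.
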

\begin{proof} Fix $n$ the dimension of ambient space, $m\geq 6$ the dimension of semialgebraic h-cobordism and $(p,q)$ a bound on its complexity. By the above Theorem, there exists $(t,u)\in \N^2$ such that the following formula holds:

 $\Phi (n, m, p, q, t, u):=$
 \begin{quote}
 \virg{\it{For every semialgebraic h-cobordism $(M,
M_{0}, M_{1})$ in  $\R^n$ of complexity at most $(p, q)$
 simply  connected, there  exists  a semialgebraic  homeomorphism
$f: M \rightarrow M_0 \times [0,1]$  such that its 
graph $\Gamma_f \in \mathcal{A}(2n+1, t, u).$}}
\end{quote} 
We ask for  this sentence to be true  over any real closed field.  We can translate  the statement $\Phi (n, m, p, q, t, u)$ into a first order sentence of the theory of real closed fields.\\
Indeed, the space of parameters of semialgebraic h-cobordisms in $\R^n$ of complexity at most $(p,q)$ and semialgebraically simply connected of dimension $m$ is $\mathcal{H}cob(n, m, p, q)$ as constructed in the above Theorem.
  Denote by
$\mathcal{G}(n, p, q, t, u)$ the set of $(a, b, f)\in
\mathcal{A}(n, p, q)^2\times \mathcal{A}(2n+1, t, u)$ such that  $f:
\mathcal{S}_a \longrightarrow \mathcal{S}_b\times [0,1]$ is a semialgebraic  homeomorphism. 
The conditions that must be satisfied by $f$ to be a semialgebraic 
 homeomorphism, can be translated into a first order formula of the theory of real closed fields
with coefficients in $\mathbb{Z}$  in an  effective way (see the proof of Proposition \ref{homeo}). 
Consequently
$\mathcal{G}(n, p, q, t, u)$ is a semialgebraic set defined by a 
first order formula of the theory of real closed fields
with coefficients in $\mathbb{Z}$. We can now write the following statement:
\newline
$\Phi (n, m, p, q, t, u)$:
$$\virg{ \forall(a, b, c)\in \mathcal{H}cob(n, m, p, q)\,\exists f\in \mathcal{A}(2n+1, t, u) (a, b, f)\in \mathcal{G}(n, p, q, t, u)}.$$
The statement  $\Phi (n, m, p, q, t, u)$ as defined is a first order sentence of the theory of real closed fields
with coefficients in  $\mathbb{Z}$. Since $\R \models \Phi (n, m, p, q,
t, u)$, by
Tarski-Seidenberg Principle, for any real closed field $R$, one gets $
R\models \Phi (n, m, p, q, t, u)$. 

\end{proof}

\section{On non-effectiveness of semialgebraic\\ h-cobordism theorem}
We proved the existence of a uniform bound in the semialgebraic h-cobordism theorem. One the other hand one could expect, when working with semi-algebraic and compact PL objects, that bounds should be recursive in the sense of [Ma]. To be more precise, what we mean by effective is the following. A statement is effective if admits a uniform bound which is bounded by a recursive function.
It is not always the case. There are examples where uniform bounds  exist but  are not recursive. An example of this type can be found in  [ABB]. Namely: 

Let $K_{\Delta^m}$ be the standard triangulation of the standard simplex $\Delta^m$.
Let be $B=|K|$ a PL $m$-ball with $K$ a finite simplicial complex. 
By Standard Subdivision of $B$ we mean  a simplicial isomorphism $g:K'\rightarrow L$ where  $K'\lhd K$ and $L\lhd K_{\Delta^m}$.\\
Fixing $m$,  the authors proved the following:
\begin{quote}\it{
There exists $\Psi_{SS}(d)$ depending only on $d$ such that for any finite simplicial complex $K$ with at most $d$ vertices  such that $|K|$ is a 
PL m-ball, there exists $K'\lhd K$ with at most  $\Psi_{SS}(d)$ vertices, and a simplicial isomorphism of $K'$ with $L$ a subdivision of $K_{\Delta^m}$.}
\end{quote}
But the preceding uniform bound is not recursive
\begin{teo} ([ABB], Corollary 2.18)\label{abb}\\
For $m\geq 5$, $\Psi_{SS}$ cannot be bounded by a recursive function.
\end{teo}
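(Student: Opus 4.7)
The plan is to argue by contradiction, reducing a recursive upper bound on $\Psi_{SS}$ to an algorithmic decision procedure for recognition of the PL $m$-ball, which is impossible for $m\geq 5$ by the classical Markov-type unrecognizability theorem referenced in the excerpt as [VKT].

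Suppose, for contradiction, that there is a recursive function $\varphi\colon\mathbb{N}\to\mathbb{N}$ with $\Psi_{SS}(d)\leq\varphi(d)$ for all $d$. Given a finite simplicial complex $K$ with $d$ vertices, I would build the following decision procedure: compute $N=\varphi(d)$; enumerate the finite list of pairs $(K',L)$ of combinatorial subdivisions $K'\lhd K$, $L\lhd K_{\Delta^m}$ whose vertex sets have cardinality at most $N$; for each such pair, test whether there exists a simplicial isomorphism $g\colon K'\to L$, which is a purely combinatorial finite check; output \emph{yes} if some pair succeeds, and \emph{no} otherwise. Correctness follows from the two directions of the defining property recalled just above the statement: if $|K|$ is a PL $m$-ball, then by definition of $\Psi_{SS}$ together with $\varphi\geq\Psi_{SS}$ some witnessing pair exists within the vertex bound, so the algorithm answers \emph{yes}; conversely, any such pair yields a PL homeomorphism $|K|\cong|\Delta^m|$, so \emph{yes} is always correct. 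Hence recognition of PL $m$-balls for $m\geq 5$ would be decidable, contradicting [VKT].

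The main obstacle is not the reduction itself but the input theorem [VKT], a deep result whose proof ultimately relies on the existence of finitely presented groups with unsolvable word problem; in the present context it is legitimate to invoke it as a black box. The only routine subtlety on my side is fixing a bookkeeping convention under which combinatorial subdivisions with a bounded vertex count are parametrized by finite discrete data, so that both their enumeration and the simplicial isomorphism test between finite complexes are visibly recursive; this is standard and poses no serious difficulty.
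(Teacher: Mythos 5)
The paper does not actually prove this statement: Theorem~\ref{abb} is quoted from [ABB] (Corollary 2.18) and invoked as a black box, so there is no in-paper argument against which to compare your proposal. Judged on its own, your reduction is the right one and, to the best of my knowledge, is the argument in [ABB]: a recursive bound on $\Psi_{SS}$ would yield a decision procedure for PL $m$-ball recognition, which contradicts the Markov--Novikov-type unrecognizability results for dimension $\geq 5$ surveyed in [VKT].

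The one step that deserves more than a nod to ``standard bookkeeping'' is the enumeration. A subdivision $K'\lhd K$ with at most $N$ vertices is \emph{not} a priori one of finitely many objects: the new vertices may lie anywhere in $|K|$. What is finite is the set of abstract simplicial complexes on at most $N$ vertices together with the candidate simplicial isomorphisms between pairs of them. For each such abstract candidate $(K'_0, L_0, g)$ one must still decide whether $K'_0$ admits a geometric realization subdividing $K$ and $L_0$ one subdividing $K_{\Delta^m}$; this is a first-order condition over the reals in a bounded number of vertex-coordinate variables, hence decidable by Tarski--Seidenberg quantifier elimination (alternatively one can restrict to rational vertices, but then one needs a separate argument that this loses no combinatorial types). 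Since the entire paper is about which uniform bounds can and cannot be made recursive, that effectiveness step is precisely the kind of thing that should be written out rather than waved away; once it is, your reduction is complete and correct.
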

We can now prove the non-effectiveness of the semialgebraic h-cobordism theorem.
\begin{teo} For $m\geq 6$, the uniform bound $\Psi_{HC}$ of Theorem \ref{bu} cannot be bounded by a recursive function.
\end{teo}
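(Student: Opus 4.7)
The strategy is to reduce the effectiveness question for $\Psi_{HC}$ to the one already settled for $\Psi_{SS}$ by Theorem \ref{abb}. Fix $m\geq 6$ and assume, towards a contradiction, that $\Psi_{HC}(n,m,p,q)$ is bounded by a recursive function of its arguments. I will derive from this a recursive bound on $\Psi_{SS}(d)$, contradicting Theorem \ref{abb}.

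Given a finite simplicial complex $K$ with at most $d$ vertices such that $|K|$ is a PL $m$-ball, I first realise $|K|$ semialgebraically in some $\R^n$ of complexity $(p,q)$ recursive in $d$ by placing the vertices of $K$ at integer points in general position and taking the union of the affine simplices they span. I then choose a standardly-triangulated closed PL subball $B\subset\mathrm{int}|K|$ of combinatorial size recursive in $d$---for instance, the closed star of an interior vertex in the second barycentric subdivision of $K$---and form the semialgebraic cobordism $W=|K|\setminus\mathrm{int}(B)$ with boundary components $M_0=\partial B$ and $M_1=\partial|K|$. Because $|K|$ is a PL ball of dimension $m\geq 6$, the space $W$ is PL homeomorphic to $S^{m-1}\times[0,1]$; in particular it deformation retracts semialgebraically onto either boundary component and is simply connected, so $(W,M_0,M_1)$ is a simply connected semialgebraic h-cobordism whose complexity is recursive in $d$.

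Applying the assumed recursive bound $\Psi_{HC}$, there exists a semialgebraic homeomorphism $f\colon W\to M_0\times[0,1]$ whose graph lies in $\mathcal{A}(2n+1,t,u)$ with $(t,u)$ recursive in $d$. By Proposition \ref{prepa} and the decidability of the theory of real closed fields, such an $f$ can be located by a bounded effective search in $\mathcal{A}(2n+1,t,u)$. I then triangulate $W$ and $M_0\times[0,1]$ semialgebraically and compatibly with $\Gamma_f$, with the triangulation of $K$, and with the standard triangulations of $B$ and $M_0\times[0,1]$, producing a PL homeomorphism $\tilde f$ between explicit subdivisions whose size is recursive in $d$. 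After adjusting $\tilde f|_{\partial B}$ by the PL Alexander trick on $B$ and gluing with $\mathrm{id}_B$, one obtains a PL homeomorphism $|K|\to B\cup_{\partial B}(\partial B\times[0,1])$ which, composed with the standard PL collapse of a collared ball onto $\Delta^m$, yields a simplicial isomorphism between a subdivision $K'\lhd K$ and a subdivision $L\lhd K_{\Delta^m}$ whose number of vertices is recursive in $d$. This bounds $\Psi_{SS}(d)$ by a recursive function, contradicting Theorem \ref{abb}.

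The main obstacle is quantitative control of the triangulation step: one must verify that a semialgebraic set described by a formula of complexity $(t,u)$ admits a triangulation compatible with a bounded collection of semialgebraic subsets whose number of simplices is bounded by a recursive function of $(t,u)$. This is assured by the primitive-recursive character of cylindric algebraic decomposition and of the semialgebraic triangulation procedure of [BCR]. The remaining ingredients---the effective semialgebraic embedding of $|K|$, the selection of an interior standard subball $B$, the bounded search in $\mathcal{A}(2n+1,t,u)$, and the final PL assembly---are manifestly recursive, so the entire reduction is recursive and the contradiction goes through.
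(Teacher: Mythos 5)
Your argument is correct and follows essentially the same route as the paper's proof: remove a small interior PL ball from $|K|$ (the paper removes a single $m$-simplex $\tau$ with $|\tau|\cap\partial B=\emptyset$; you take the closed star of an interior vertex) to form a simply connected semialgebraic h-cobordism of complexity recursive in $d$, apply the hypothetical recursive bound $\Psi_{HC}$, glue the ball back with an explicit PL homeomorphism onto $\Delta^m$, and thereby derive a recursive bound on $\Psi_{SS}(d)$, contradicting Theorem \ref{abb}. The only substantive difference is at the final step: where you re-derive, via effectiveness of cylindrical algebraic decomposition and compatible triangulations, that a semialgebraic homeomorphism of recursively bounded complexity between two polyhedra yields a simplicial isomorphism between subdivisions of recursively bounded size, the paper simply invokes the effective semialgebraic Hauptvermutung of [Co2], which packages exactly that statement; also, your intermediate ``bounded effective search'' for $f$ inside $\mathcal{A}(2n+1,t,u)$ is harmless but unnecessary, since bounding $\Psi_{SS}$ requires only the existence of a small Standard Subdivision, not an algorithm for producing it.
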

\begin{proof}
Let be $B$ a PL $m$-ball with $m\geq 6$. Assume $B$ triangulated by a finite simplicial complex $K=\{\sigma_1,...,\sigma_n\}$ with at most $d$ vertices.
 Let $\tau$ be  an $m$-dimensional simplex in $B$ such that $|\tau|\cap \partial B =\emptyset$ (subdivide $K$ if necessary).
It is clear that  $(\overline{B\setminus \tau}, \partial \tau,\partial B)$ is a PL simply connected h-cobordism. The complexity of this h-cobordism is bounded by a recursive function in terms of $d$.\\
 Assume  $\Psi_{HC}$ to be recursive. Then there is  a semialgebraic homeomorphism:
 $h: \overline{B\setminus \tau}\rightarrow \partial \tau \times I$ whose  complexity is recursively  in terms of $d$.  We can attach $\tau$ to $\partial \tau \times I$, identifying $\partial \tau \subset \tau$ with $\partial \tau\times \{0\}$. Then extending  $h$ over $B$ by the identity on $\tau$, we get a semialgebraic homeomorphism  $$h': B\rightarrow \tau \bigcup\limits_{\partial \tau} (\partial \tau \times I) $$ with complexity bounded by a recursive function $\Phi(d)$. Let $\tilde{x}= (x_0,...,x_m)$ with $x_i\geq 0$ and $\sum x_i=1$  be a point of $\tau$  with its barycentric coordinates. Let $\tilde{b}= (\frac{1}{m+1},...,\frac{1}{m+1})$ the barycenter of standard $m$- simplex. Now consider the following PL homeomorphism $g: \tau \bigcup\limits_{\partial \tau} (\partial \tau \times I)\rightarrow \Delta_m$ defined by $$g(x)=\left\{
\begin{array}{cccl}
 \frac{1}{2}\tilde{b}+ \frac{1}{2}\tilde{x} &&& \mbox{if} \,  x\in \tau\\
\frac{1-\lambda}{2}\tilde{b}+\frac{1+\lambda}{2}\tilde{x} , &&& \mbox{if}\,  (x,\lambda)\in \partial \tau \times I.
 \end{array}
\right. $$ So we get  a semialgebraic  homeomorphism $f: B\rightarrow \Delta_m$, given by $f= g\circ h'$, of complexity  bounded by a recursive function $\Theta(d)$ in term of $d$. Since $f$ is a semialgebraic homeomorphism of complexity bounded by $\Theta(d)$ between two compact simplicial complexes with at most $d$ vertices, the effective semialgebraic Hauptvermutung (see [Co2]), implies that there exists  a recursive function $\chi$ in terms of $\Theta(d)$ and $d$ (so just in term of $d$) such that there is a simplicial isomorphism between the subdivisions of these simplicial complexes  with at most  $\chi(d)$ vertices. This is in contradiction with Theorem \ref{abb}.
\end{proof}

\section{Nash h-cobordism theorem over any real closed field}

Now we consider  Nash manifolds.
\begin{defi}
  Let $ M$, $M_0$, $M_1$  be  compact Nash manifolds such that:
 $\partial M = M_0\cup M_1$
and   $ M_0\cap M_1= \emptyset .$
Then, the triplet $ ( M, M_0, M_1) $ is called a \em{Nash cobordism
}\rm.\\
  A Nash cobordism $( M, M_0, M_1) $ is said to be a \em{Nash h-cobordism} \rm  if the inclusions
$M_0 \hookrightarrow M$ and
 $M_1 \hookrightarrow M$ are  semialgebraic homotopy equivalences, that is, the deformation retractions are semialgebraic.
\end{defi}

\begin{teo} ( \rm Nash h-cobordism theorem)\label{hcobn}
\newline
\em{Let $( M, M_0, M_1) $ be a simply connected Nash h-cobordism .
If} \rm $\mbox{dim} M\geqq 6$  \em{then $M$ is Nash diffeomorphic
to $M_0 \times [0,1].$}
\end{teo}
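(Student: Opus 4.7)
The plan is to deduce this from Smale's smooth h-cobordism theorem (Theorem 1.1) together with a Nash approximation argument, paralleling the strategy by which Theorem \ref{hcob} was obtained from the PL version.

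First I would observe that $(M,M_0,M_1)$ is, in particular, a smooth simply connected h-cobordism. A Nash manifold is by definition a $C^{\infty}$ submanifold of $\R^n$, and the given semialgebraic deformation retractions are in particular continuous, so the inclusions $M_0\hookrightarrow M$ and $M_1\hookrightarrow M$ are ordinary homotopy equivalences. Since $\mbox{dim}\,M\geq 6$ and the cobordism is simply connected, Smale's theorem supplies a $C^{\infty}$ diffeomorphism $g:M\longrightarrow M_0\times [0,1]$.

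Second, I would upgrade $g$ to a Nash diffeomorphism by Nash approximation. By the Nash approximation theorem for compact Nash manifolds (see [S]), any $C^{\infty}$ map between compact Nash manifolds can be approximated in the $C^1$ (in fact $C^{\infty}$) topology by Nash maps. A Nash map $f$ that is sufficiently $C^1$-close to the $C^{\infty}$ diffeomorphism $g$ is automatically a diffeomorphism by the inverse function theorem together with compactness, and hence is a Nash diffeomorphism.

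The principal obstacle is the presence of boundary: we need the approximation to send $\partial M = M_0\sqcup M_1$ onto $M_0\times\{0,1\}$ while preserving the decomposition of the boundary. My plan is to first use a Nash collar neighbourhood of $\partial M$ (which exists by the Nash tubular neighbourhood theorem) to arrange that $g$ has a standard product form in the collar, then to double $M$ along $\partial M$ to obtain a closed Nash manifold $DM$ carrying a Nash involution $\tau$ whose fixed set is $\partial M$, and similarly to double $M_0\times[0,1]$. The collar normalisation allows $g$ to extend to a $\tau$-equivariant $C^{\infty}$ diffeomorphism $Dg:DM\longrightarrow D(M_0\times[0,1])$ of closed Nash manifolds. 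Applying the Nash approximation theorem equivariantly, for instance by approximating $Dg$ and then symmetrising the result using $\tau$, produces an equivariant Nash diffeomorphism; equivariance forces the fixed locus to be mapped to the fixed locus, so the restriction to $M$ yields the required Nash diffeomorphism $M\to M_0\times[0,1]$ respecting the decomposition $\partial M = M_0 \sqcup M_1$.
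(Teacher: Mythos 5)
Your overall plan is the same as the paper's: reduce to Smale's smooth h-cobordism theorem, then promote the resulting $C^\infty$ diffeomorphism to a Nash one by Nash approximation. The divergence is in the second step. You treat the boundary as the ``principal obstacle'' and construct a doubling/equivariant-approximation machine to get around it, but the approximation result the paper invokes, Theorem \ref{appron} (Shiota [S], Theorem VI.2.2), is stated precisely for \emph{compact Nash manifolds possibly with boundary}: $C^1$ diffeomorphic implies Nash diffeomorphic. Smale already supplies a $C^\infty$ (hence $C^1$) diffeomorphism $M \cong M_0 \times [0,1]$, so that theorem applies directly and finishes the proof in one line. Note also that the statement only asks for a Nash diffeomorphism $M \to M_0 \times [0,1]$, not one respecting the splitting $\partial M = M_0 \sqcup M_1$, so the concern you set out to address is weaker than you take it to be.

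Your doubling route, as written, also has real gaps. First, that the double of a compact Nash manifold with boundary is again a Nash manifold carrying a Nash involution needs an argument; this is not automatic in the Nash category the way it is (up to diffeomorphism) in the smooth category. Second, ``symmetrising'' a Nash approximant by $\tau$ is not a matter of averaging, since the target is a manifold and not a vector space: one must average in the ambient Euclidean space, retract back onto the manifold via a Nash tubular neighbourhood, and check that equivariance, Nash-ness, and $C^1$-closeness survive this process, none of which you spell out. Third, after symmetrising you still must check the map restricts to a diffeomorphism of fixed loci carrying $M_0$ to $M_0\times\{0\}$ and $M_1$ to $M_0\times\{1\}$. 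None of this is wrong in spirit, but it amounts to re-deriving the boundary case of the Nash approximation theorem, which is exactly what Shiota's Theorem VI.2.2 already provides. Replace the second half of your argument with a direct citation of that theorem.
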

This theorem is an easy consequence of differentiable h-cobordism theorem quoted in the introduction and of the following Nash approximation theorem:
\begin{teo} ([S], Theorem VI.2.2, p.202) \label{appron}\\
Let $L_1$, $L_2$ be  compact Nash manifold possibly with boundary, and $M_1$, $M_2$ their interior.
The following conditions are equivalent.\\
 (i)  $L_1$ and $L_2$ are  $C^1$ diffeomorphic.\\
 (ii)  $L_1$ and $L_2$ are Nash diffeomorphic.\\
 (iii)  $M_1$ and $M_2$ are  Nash diffeomorphic.\\
\end{teo}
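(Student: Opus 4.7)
The equivalence has three arrows of substance, two of which are essentially immediate. The implication (ii)$\Rightarrow$(i) is trivial because a Nash diffeomorphism is semialgebraic and $C^\infty$, hence in particular $C^1$. The implication (ii)$\Rightarrow$(iii) is also straightforward: a Nash (hence $C^\infty$) diffeomorphism of manifolds with boundary sends boundary to boundary and interior to interior, so the restriction of a Nash diffeomorphism $L_1\to L_2$ to $M_1$ is a Nash diffeomorphism onto $M_2$. All the content therefore lies in proving (i)$\Rightarrow$(ii) and (iii)$\Rightarrow$(ii).

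For (i)$\Rightarrow$(ii) my plan is a thickening-plus-approximation argument. First I would embed each $L_i$ as a compact Nash submanifold-with-boundary inside a slightly larger open Nash manifold $\widetilde L_i$ without boundary, arranged so that $\partial L_i$ is a Nash hypersurface of $\widetilde L_i$ admitting a Nash tubular neighbourhood (this uses a Nash version of the collar/tubular neighbourhood theorem, obtained by integrating a Nash inward vector field on $L_i$). Given a $C^1$ diffeomorphism $\varphi\colon L_1\to L_2$, these tubular neighbourhoods let me extend $\varphi$ to a $C^1$ diffeomorphism $\widetilde\varphi\colon\widetilde L_1\to\widetilde L_2$ preserving the pair $(\widetilde L_i,\partial L_i)$. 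Now I would invoke Nash approximation of $C^1$ maps between open Nash manifolds to approximate $\widetilde\varphi$, in the strong Whitney $C^1$ topology, by a Nash map $g$; since being a diffeomorphism is a $C^1$-open condition, $g$ is a Nash diffeomorphism on a neighbourhood of $L_1$. The delicate step is to force $g$ to send $\partial L_1$ exactly onto $\partial L_2$: I would first approximate $\widetilde\varphi|_{\partial L_1}$ by a Nash diffeomorphism $\psi\colon\partial L_1\to\partial L_2$ (easier because the boundaries are closed Nash manifolds), then apply a relative Nash approximation theorem, keeping the prescribed boundary values $\psi$ fixed, to approximate $\widetilde\varphi$ in the bulk.

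For (iii)$\Rightarrow$(ii) I would show that a Nash diffeomorphism $f\colon M_1\to M_2$ extends Nash-diffeomorphically across the boundary by a collar-matching argument. Choose Nash collars $c_i\colon\partial L_i\times[0,1)\hookrightarrow L_i$ with $c_i(\cdot,0)=\mathrm{id}_{\partial L_i}$, obtained as above by integrating a Nash inward vector field. Pulling back the collar structure of $L_2$ by $f$ produces, near each end of $M_1$, a second Nash collar-type structure that must be reconciled with $c_1$; a Nash isotopy of collars (the Nash analogue of the uniqueness-of-collars theorem) brings the two into agreement. Once this is done, $f$ takes the standard collar form $(x,t)\mapsto(\psi(x),t)$ near $\partial L_1$ for a Nash diffeomorphism $\psi\colon\partial L_1\to\partial L_2$, which furnishes the required Nash-diffeomorphic extension to $L_1$.

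The main obstacle, common to both nontrivial implications, is obtaining genuine Nash (as opposed to merely $C^\infty$) control at the boundary. The classical Nash approximation theorems are cleanest for open Nash manifolds, so transferring them to the with-boundary setting requires two additional ingredients in the Nash category: (a) existence and uniqueness of Nash collars of $\partial L$ in $L$, and (b) a relative Nash approximation theorem in which prescribed Nash boundary values are kept fixed. Establishing (a) and (b), and combining them as above, is exactly the technical heart of Shiota's argument in [S, Ch.\ VI].
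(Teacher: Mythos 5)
The paper gives no proof of this statement at all: it is quoted verbatim from Shiota ([S], Theorem VI.2.2) and used as a black box (e.g.\ in Theorem \ref{hcobn} and in the Nash s-cobordism theorem), so there is no argument in the text to compare yours with. On its own terms, your treatment of (ii)$\Rightarrow$(i) and (ii)$\Rightarrow$(iii) is fine, and the outline of (i)$\Rightarrow$(ii) (Nash collars, extension to an open Nash neighbourhood, Nash approximation, then a relative approximation keeping Nash boundary values fixed) is the right flavour, though the relative approximation theorem you invoke is itself a substantial result you are assuming rather than proving.

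The genuine gap is in (iii)$\Rightarrow$(ii). Your argument --- pull back the Nash collar of $L_2$ through $f$ to obtain a second collar-type structure near the end of $M_1$, then invoke uniqueness of (Nash) collars --- would apply word for word to a $C^\infty$ diffeomorphism of interiors with smooth collars, and the corresponding smooth statement is false: if $W$ is an h-cobordism with non-trivial Whitehead torsion, then by Stallings' theorem $\mbox{Int}\, W$ is diffeomorphic to $M_0\times(0,1)=\mbox{Int}(M_0\times[0,1])$, yet $W$ is not diffeomorphic to $M_0\times[0,1]$. So compact manifolds with diffeomorphic interiors need not be diffeomorphic, and any correct proof of (iii)$\Rightarrow$(ii) must use the semialgebraicity of $f$ at the end in an essential way; your sketch never does. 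Moreover, uniqueness of collars (Nash or smooth) concerns two collars of the boundary of one fixed manifold with boundary, whereas what you have are two parametrizations of a neighbourhood of an end of the open manifold $M_1$, one by $\partial L_1\times(0,1)$ and one by $\partial L_2\times(0,1)$; identifying them, or even concluding $\partial L_1\cong\partial L_2$, is essentially the statement to be proved, not a lemma you may quote. This semialgebraic rigidity of the end of a Nash manifold is precisely the content of Shiota's proof, and it is missing from your proposal.
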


We state two results which give an analogue of Hardt theorem for Nash manifolds with boundaries.
\begin{teo} \label{sub}
Let $B\subset R^p$ be a semialgebraic set, let $X$ be a semialgebraic subset of  $R^n\times B$ such that for every  $b\in B$, $X_b$ is a Nash submanifold of $R^n$.Then there is a  stratification $B=\cup_{i\in I}M^i$ of $B$ into  a finite number of  Nash manifolds, such that for any $i\in I$, $X_{|M^i}$ is  Nash manifold and the projection $X_{|M^i}\rightarrow M^i$ is a  submersion. If, moreover, $X_b$ is compact for every $b\in B$, we can ask this  submersion to be proper.
\end{teo}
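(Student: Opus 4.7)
The plan is to build the desired stratification in two stages: first, a Nash stratification of $B$ compatible with a Nash stratification of $X$; second, a refinement using the rank of the projection to force it to be a submersion on each stratum. Properness under the compact-fiber assumption would then be handled by a separate semialgebraic argument.

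For the first stage, I would apply a standard Nash stratification theorem for semialgebraic sets (see [BCR, Ch. 9] and [S, Ch. VI]) to obtain compatible finite partitions $B = \bigsqcup_{i\in I} M^i$ and $X = \bigsqcup_{j\in J} X^j$ into connected Nash submanifolds of the ambient affine spaces, arranged so that each $X^j$ is sent by the projection $\pi$ into a single $M^{i(j)}$. The restriction $\pi|_{X^j} : X^j \to M^{i(j)}$ is then a Nash map between Nash manifolds, though a priori not submersive. By replacing $M^{i(j)}$ with its semialgebraic subset $\pi(X^j)$ and restratifying (or discarding components of $X^j$ whose image is smaller), I may further arrange that $\pi|_{X^j}$ is surjective onto $M^{i(j)}$.

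In the second stage, on each piece $X^j$ the rank $r(x) = \operatorname{rank}_x \pi|_{X^j}$ is a semialgebraic integer-valued function bounded by $\dim M^{i(j)}$. Its level sets provide a further semialgebraic partition, which I would refine to a compatible Nash stratification of $X^j$ and $M^{i(j)}$ (saturating in the base via $\pi$) so that $r$ is constant on each resulting piece. On a constant-rank stratum the image under $\pi$ is a Nash submanifold of dimension equal to $r$; since $\pi$ is surjective onto $M^{i(j)}$, this forces $r = \dim M^{i(j)}$, making $\pi|_{X^j}$ a Nash submersion. The fiberwise hypothesis that each $X_b$ is a Nash submanifold enters here to pin down the dimension of $X^j$ as $\dim M^{i(j)} + \dim X^j_b$, ruling out lower-rank pathologies after refinement.

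For the last assertion, assume every $X_b$ is compact. Then $b \mapsto \sup_{x\in X_b}\|x\|$ is a finite semialgebraic function on $B$, hence bounded on every compact subset of $M^i$; therefore $\pi^{-1}(K)$ is bounded in $R^n \times R^p$ whenever $K \subset M^i$ is compact. To obtain closedness, I would invoke the Nash analogue of the Ehresmann fibration theorem: a surjective Nash submersion with compact Nash fibers over a connected Nash manifold is locally trivial, and hence proper. Alternatively, semialgebraic curve selection shows that any accumulation point of $\pi^{-1}(K)$ at $(x_0,b_0)$ with $b_0\in K$ must lie in the compact fiber $X_{b_0}$. The main obstacle will be the rank identification in the second stage: organizing the refinement carefully enough that the constant rank on each final stratum is identified with $\dim M^{i(j)}$ through semialgebraic dimension theory and the fiberwise Nash structure, since all the submersion and properness claims follow more or less routinely once this is in place.
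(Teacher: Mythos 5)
The paper gives no proof of this theorem; it simply cites Coste--Shiota [CS, Corollary 2.3], so your proposal has to stand on its own. The overall structure -- Nash-stratify $X$ and $B$ compatibly, refine by the rank of the projection, and use surjectivity plus semialgebraic dimension theory to force the rank to equal $\dim M^{i}$ -- is a sensible sketch in the spirit of such results. But the properness step contains a genuine error. Invoking Ehresmann is circular: Ehresmann's fibration theorem takes properness (or compactness of the total space) as a \emph{hypothesis}, so it cannot be used to deduce properness from compactness of the fibers. And the assertion that ``a surjective Nash submersion with compact Nash fibers over a connected Nash manifold is locally trivial, hence proper'' is simply false, as is the curve-selection claim that any accumulation point of $\pi^{-1}(K)$ over $b_0\in K$ must lie in $X_{b_0}$. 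For a counterexample take
\[
X=\bigl\{(\sqrt{t(2-t)},\,t):0<t<2\bigr\}\ \cup\ \bigl\{(-\sqrt{t(2-t)},\,t):1<t<2\bigr\}\subset R\times(0,2).
\]
Every fiber is a finite nonempty set, hence a compact $0$-dimensional Nash submanifold, $X$ is a $1$-dimensional Nash manifold, and the projection to $(0,2)$ is a Nash submersion; yet $\pi^{-1}([1/2,3/2])$ is not compact, since the second branch accumulates at $(-1,1)\notin X$. The fix is a further refinement of the partition of $B$ -- compatible with the (images of the) frontiers of the strata of $X$, or strong enough to make $\pi$ semialgebraically trivial over each piece as in Hardt -- so that $X_{|M^i}$ becomes closed in $R^n\times M^i$; only then does the boundedness you established upgrade to properness. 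This refinement is not ``routine'' in your sketch: it is exactly the part of the statement doing work.

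Two smaller points. The theorem requires the \emph{entire} preimage $X_{|M^i}$ to be a Nash manifold, whereas your construction delivers pieces $X^j$ of $X$; you still need to argue that the union of all $X^j$ lying over a fixed $M^i$ is a single Nash submanifold of $R^n\times M^i$ (same dimension, no mutual accumulation -- this is where the fiberwise hypothesis and the frontier condition of the stratification genuinely enter, and it should be made explicit). Also, the surjectivity that forces $r=\dim M^{i(j)}$ was arranged \emph{before} you refine by rank; after that refinement each piece no longer obviously surjects onto its new base stratum, so you must either re-establish surjectivity or argue that the lower-rank loci project into lower-dimensional strata and recurse. None of these are fatal, but as written the proposal skips exactly the steps in which the hypotheses are used.
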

\begin{proof}
See [CS, Corollary 2.3].
\end{proof}
\begin{teo} \label{subbord}
Let $M\subset R^{m'}$ be a  Nash submanifold of dimension $m$ possibly with boundary $\partial M$.
Let $\varpi: M\rightarrow R^k$, $k>0$ be a proper onto Nash submersion such that $\varpi_{|\partial M}$ is onto submersion. Then there exists a  Nash diffeomorphism 
$\varphi=(\varphi',\varpi):(M, \partial M)\rightarrow (M\cap \varpi^{-1}(0), \partial M\cap \varpi^{-1}(0))\times R^k.$
 \end{teo}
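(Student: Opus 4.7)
The plan is to adapt Ehresmann's fibration theorem to the Nash-with-boundary setting. Concretely, I would construct a trivialising map $\psi:F\times R^k\to M$, with $F:=M\cap\varpi^{-1}(0)$, by integrating Nash lifts of the standard coordinate vector fields on $R^k$, and then set $\varphi:=\psi^{-1}$; automatically $\varphi=(\varphi',\varpi)$, where $\varphi'$ is the composition of $\varphi$ with the projection onto the first factor. The two substantive ingredients are (a) producing global Nash vector fields on $M$ that lift $\partial/\partial t_i$ and are tangent to $\partial M$ along $\partial M$, and (b) arranging that the resulting trivialisation is Nash and not merely $C^\infty$.

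For (a) the construction is local-to-global. Around any interior point, since $\varpi$ is a Nash submersion one can choose local Nash coordinates in which $\varpi$ is the projection onto the last $k$ coordinates, and then $\partial/\partial t_i$ has the obvious Nash lift. Around a boundary point $p\in\partial M$ the extra hypothesis that $\varpi|_{\partial M}$ is also a submersion is crucial: it lets one pick local Nash coordinates $(y_1,\ldots,y_{m-k},t_1,\ldots,t_k)$ on $M$ in which $\partial M$ is defined by $y_1=0$ and $\varpi$ is the projection onto $(t_1,\ldots,t_k)$, so that the local lifts are automatically tangent to $\partial M$. A Nash partition of unity subordinated to a finite Nash cover (available for Nash manifolds, cf.~[S]) glues these local lifts into global Nash vector fields $X_1,\ldots,X_k$ on $M$ with $d\varpi(X_i)=\partial/\partial t_i$ and each tangent to $\partial M$. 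Because any integral curve of $X_i$ satisfies $\varpi(\phi^i_s(x))=\varpi(x)+se_i$, and $\varpi$ is proper, no maximal integral curve escapes in finite time, so the flows $\phi^i_\bullet$ are complete; tangency to $\partial M$ keeps $\partial M$ invariant. One then defines
$$\psi(x_0,t_1,\ldots,t_k):=\phi^k_{t_k}\circ\cdots\circ\phi^1_{t_1}(x_0),$$
which is a $C^\infty$ diffeomorphism $F\times R^k\to M$ commuting with $\varpi$ and restricting to a diffeomorphism $(F\cap\partial M)\times R^k\to\partial M$.

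The main obstacle is (b). The flow of a Nash vector field is real-analytic but in general not semialgebraic, so $\psi$ is a priori only $C^\infty$. To upgrade it, I would invoke a parametric relative version of the Nash approximation result in the spirit of Theorem~\ref{appron} (this is precisely the kind of statement developed in the Coste--Shiota work cited as [CS] for Theorem~\ref{sub}). Properness of $\varpi$ ensures every fibre $\varpi^{-1}(t)\cap M$ is a compact Nash manifold with boundary, so fibrewise Theorem~\ref{appron} converts $C^1$ diffeomorphisms into Nash ones; the parametric refinement, proved by Nash approximation with good relative controls, replaces $\psi$ by a genuinely Nash diffeomorphism still commuting with $\varpi$ and still mapping $(F\cap\partial M)\times R^k$ onto $\partial M$. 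Inverting yields the desired Nash diffeomorphism $\varphi=(\varphi',\varpi):(M,\partial M)\to(F,F\cap\partial M)\times R^k$.
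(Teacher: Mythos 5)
The paper does not prove this statement at all: the entire ``proof'' given is the one-line citation ``See [FKS, Theorem I]'' (Fukui--Koike--Shiota). So you are not reproducing the paper's argument but attempting an actual proof, and it should be judged on its own merits.

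Your step (a) is sound as far as it goes, and the observation that the extra hypothesis on $\varpi|_{\partial M}$ is there precisely to let you choose lifts tangent to the boundary is the right reading of the statement. But there are two genuine gaps. First, step (b) is not a detail but the entire technical content of the theorem. The flow of a Nash vector field is real-analytic but almost never semialgebraic, so the trivialisation $\psi$ you build is $C^\infty$ and no more. You then ``invoke a parametric relative version of the Nash approximation result in the spirit of Theorem~\ref{appron}.'' No such result is available off the shelf. Theorem~\ref{appron} upgrades a single $C^1$ diffeomorphism between two compact Nash manifolds (with boundary) to a Nash diffeomorphism; it provides no control in a parameter, no joint semialgebraicity in $(x_0,t)$, no compatibility with $\varpi$ (i.e.\ that the approximated map still commutes with the projection), and no preservation of $\partial M$. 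Producing a trivialisation that is Nash \emph{and} fibre-preserving is exactly what Fukui--Koike--Shiota (building on Coste--Shiota) prove, and they do not obtain it by approximating an arbitrary $C^\infty$ Ehresmann trivialisation after the fact; the construction must be arranged to be semialgebraic from the start. As written, the crucial implication in (b) is assumed, not proved.

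Second, the theorem is stated over an arbitrary real closed field $R$, where there is no completeness, no ODE theory, and no integration of vector fields: $\phi^i_s$ does not make sense. Your argument is confined to $R=\R$. To get the general statement from the real one you would need a uniform-bound-plus-Tarski transfer of exactly the kind the rest of the paper develops, and that step is absent from your proposal. So even granting (b) over $\R$, the result you would have proved is strictly weaker than the one claimed.
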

 \begin{proof} See [FKS, Theorem I].
 \end{proof}
We can now  formulate a Nash triviality in family of Nash manifolds with boundaries.
\begin{teo}\label{famille}
Let $B$ be  a semialgebraic set and   $\Pi: R^n\times
B\rightarrow B$ be the  projection on $B$. Let $X$ be a 
semialgebraic subset of $R^n\times B$ such that for all $b\in B$,
$$X_b = \{x\in R^n: \, (x,b)\in X \}$$
is a compact Nash manifold in $R^n$ with boundary. Then there  exists a finite partition of 
 $B$ in $B=\bigcup_{i\in I}\, M^i$ where
$M^i$ are Nash manifolds, and for each  $i\in I$ there is an affine Nash manifold
 $F^i\subset R^n$ with boundary and a Nash diffeomorphism which trivializes $\Pi_{|X_{|M_i}}$
$$h^i: F^i\times M^i \rightarrow X\cap \Pi^{-1}(M^i)$$
compatible with the boundary.

\end{teo}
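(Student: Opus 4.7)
The plan is to combine the Nash stratification of Theorem \ref{sub} with the Nash trivialization of Theorem \ref{subbord}, after a preliminary reduction of each base stratum to an affine Nash cell of the form $R^{k}$, which is required because Theorem \ref{subbord} is stated only when the target of the submersion is an affine space.

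First I would introduce the auxiliary set
$\partial X = \{(x,b)\in X : x\in \partial X_b\}$,
which is a semialgebraic subset of $R^n\times B$. Its description is uniform: by Proposition \ref{bord}, the property \emph{``$x$ is a boundary point of the Nash manifold $X_b$''} is expressible by a first order formula in the variables $(x,b)$. I would then apply Theorem \ref{sub} to the family $X\to B$ and to the family $\partial X\to B$, and take a common refinement of the two resulting stratifications. Compactness of every fibre $X_b$ ensures the submersions produced by Theorem \ref{sub} are proper. This yields a finite Nash stratification $B=\bigsqcup_{i} N^{i}$ such that both $\Pi|_{X\cap\Pi^{-1}(N^{i})}\to N^{i}$ and $\Pi|_{\partial X\cap\Pi^{-1}(N^{i})}\to N^{i}$ are proper onto Nash submersions.

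Second, I would refine each stratum $N^{i}$ into finitely many semialgebraic pieces, each of which is a Nash submanifold of $B$ that is Nash diffeomorphic to some affine space $R^{k}$. This is the Nash cell decomposition available in Shiota [S] (for instance via Nash triangulation, since an open Nash simplex is Nash diffeomorphic to $R^{\dim}$). After a last application of Theorem \ref{sub} on each cell to restore the submersion condition possibly lost by restriction, I obtain a finite partition $B=\bigsqcup_{i\in I} M^{i}$ together with Nash diffeomorphisms $\alpha^{i}:M^{i}\to R^{k_{i}}$, such that $\Pi : X\cap\Pi^{-1}(M^{i})\to M^{i}$ and $\Pi : \partial X\cap\Pi^{-1}(M^{i})\to M^{i}$ are proper onto Nash submersions. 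Zero-dimensional strata are handled trivially: a single-point stratum needs no trivialization.

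Third, for each $i$ with $k_{i}>0$, the composition $\varpi^{i}=\alpha^{i}\circ\Pi : X\cap\Pi^{-1}(M^{i})\to R^{k_{i}}$ is a proper onto Nash submersion whose restriction to $\partial X\cap \Pi^{-1}(M^{i})$ is again a proper onto Nash submersion. Theorem \ref{subbord} therefore produces a Nash diffeomorphism
$$\varphi^{i}:\bigl(X\cap\Pi^{-1}(M^{i}),\,\partial X\cap\Pi^{-1}(M^{i})\bigr)\longrightarrow (F^{i},\partial F^{i})\times R^{k_{i}},$$
where $F^{i}$ is the fibre $(\varpi^{i})^{-1}(0)$, a compact affine Nash manifold with boundary. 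Composing with $\mathrm{id}_{F^{i}}\times (\alpha^{i})^{-1}$ and inverting yields the required Nash diffeomorphism $h^{i}:F^{i}\times M^{i}\to X\cap\Pi^{-1}(M^{i})$ compatible with the boundary.

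The main obstacle is the second step: Theorem \ref{subbord} forces the base of each piece of the trivialization to be genuinely affine, so an honest Nash cell subdivision of the strata produced by Theorem \ref{sub} is unavoidable. Once this structural input from Shiota's theory is granted, the assembly is a straightforward combination of Theorems \ref{sub} and \ref{subbord}, together with the semialgebraic expressibility of the boundary provided by Proposition \ref{bord}.
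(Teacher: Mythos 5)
Your proposal follows essentially the same route as the paper: define the boundary family $Y=\{(x,b):x\in\partial X_b\}$, apply Theorem \ref{sub} to stratify $B$ so that the projections from $X$ and $Y$ become proper Nash submersions, refine each stratum into Nash cells diffeomorphic to an affine space (the paper cites [BCR, Proposition 2.9.10]), and then conclude with Theorem \ref{subbord}. The only inessential difference is your extra application of Theorem \ref{sub} after the cell refinement, which the paper omits because restricting a proper Nash submersion over a Nash submanifold of the base automatically remains a proper Nash submersion.
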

\begin{proof} Set $Y= \{(x,b): x\in\partial X_b\}$.
By Theorem \ref{sub}, one can prove that there exists a finite Nash partition of $B$ into 
$B=\bigcup\limits_{i=1}^r B_i$  such that for every $i$, $B_i$, $X_{|B_i}$ and $Y_{|B_i}$ are Nash submanifolds and the projections $\pi_{|X_{|B_i}}:X_{|B_i}\rightarrow B_i$ and $\pi_{|Y_{|B_i}}:Y_{|B_i}\rightarrow B_i$ are proper onto Nash submersions.
 For any $i$ , there exists a partition of  $B_i$, by [BCR, Proposition 2.9.10, p.57],  into $B_i= \bigcup\limits_{j=i}^t S_{i_j}$ such that $S_{i_j}$ is Nash diffeomorphic to $]0,1[^{k_{i_j}}$. However, it is clear that $]0,1[^{k_{i_j}}$ is Nash diffeomorphic to $R^{k_{i_j}}$. So we get the proper onto Nash submersions 
$X_{|S_{i_j}}\rightarrow R^{k_{i_j}}$ and $Y_{|S_{i_j}}\rightarrow R^{k_{i_j}}$.
By  Theorem \ref{subbord} applied to $X_{|S_{i_j}}$ with its boundary $\partial X_{|S_{i_j}}= Y_{|S_{i_j}}$, there exists a Nash submanifold with boundary $F_{i_j}$  of $R^n$ such that:
$(X_{|S_{i_j}}; {Y}_{|S_{i_j}})\cong(F_{i_j};\partial F_{i_j})\times R^{k_{i_j}}.$
This is equivalent to:
$(X_{|S_{i_j}}; Y_{|S_{i_j}}) \cong(F_{i_j} ;\partial F_{i_j})\times S_{i_j}.$
Which ends the proof.
\end{proof}
By a result of Ramanakoraisina (cf.\, [R, Proposition 3.5]), given $m,p,q$ there is an integer $l$ such that for any semialgebraic map $f$ of complexity $(p,q)$, over an open  subset of $\R^m$,  $f$ is Nash if and only if $f$ is $C^l$. Moreover, the integer $l$ is recursive in terms of $m,p,q$ (cf.[CS, Lemma 5.1]). From this we get the following proposition.
\begin{pro} \label{nvariete}
 Let $S$ and $T$ be semialgebraic subsets of $\R^n$ such that $T\subset S$. The following statements can be translated into a first order  formula of the theory of real closed fields:\\
 (i)  \virg{$S$ is a Nash submanifold of  $\R^n$ of dimension $m$}\\
 (ii) \virg{$S$  is a Nash submanifold of $\R^n$ of dimension $m$,  with boundary 
 the set $T$} 
\end{pro}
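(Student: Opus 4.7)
The plan is to reduce the full Nash case to the $C^r$-Nash case already handled in Propositions \ref{variete} and \ref{bord}, using Ramanakoraisina's bound [R, Proposition 3.5] (combined with [CS, Lemma 5.1]) recalled in the paragraph preceding the statement: for each triple $(m, p, q)$ there is an integer $l = l(m, p, q)$, recursive in its arguments, such that any semialgebraic map of complexity at most $(p, q)$ defined on an open semialgebraic subset of $\R^m$ is Nash if and only if it is $C^l$.

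First I would fix a complexity bound $(p, q)$ on the pair $(S, T)$, so that we work with parameters in $\mathcal{A}(n, p, q)$ from Proposition \ref{prepa}. The local parametrizations $f$ that appear in the formulas $\Phi_\sigma$ and $\Psi_\sigma$ of Propositions \ref{variete} and \ref{bord} are graphs of restrictions of $S$ (respectively $T$) to products of open balls, hence are semialgebraic of complexity bounded by some pair $(p', q')$ that depends in a recursive way only on $(n, m, p, q)$, not on the specific choice of $S$ or $T$.

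Next I would set $r = l(m, p', q')$ and apply the formulas of Propositions \ref{variete} and \ref{bord} with this value of $r$. By Ramanakoraisina's bound, for any local parametrization of complexity at most $(p', q')$ being $C^r$ is equivalent to being Nash. Hence the first order formula asserting that $S$ is a $C^r$-Nash submanifold of $\R^n$ of dimension $m$ (respectively with boundary $T$) is, on the parameter space $\mathcal{A}(n, p, q)$, equivalent to the statement that $S$ is a Nash submanifold of the same dimension (respectively with boundary $T$). This yields, for each complexity bound $(p,q)$, the desired first order translations of (i) and (ii).

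The main obstacle is to make the complexity bound $(p', q')$ on the local parametrizing functions truly uniform, depending only on $(n, m, p, q)$ and not on the specific $S$ and $T$ at hand, so that a single choice of $r = l(m, p', q')$ works across the whole parameter space $\mathcal{A}(n, p, q)$. The uniformity follows because the local $f$ is obtained by intersecting $S$ (or $T$) with a semialgebraic box and then projecting onto the first $m$ coordinates, an operation whose output complexity is controlled solely by $(n, m, p, q)$. Once this uniformity is in hand, statements (i) and (ii) are direct consequences of the already proven $C^r$-Nash versions in Propositions \ref{variete} and \ref{bord}.
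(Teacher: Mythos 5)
Your proposal is correct and follows essentially the same route as the paper. The paper's proof is simply the two-line remark ``use the notification above [Ramanakoraisina's bound that, for fixed complexity $(p,q)$ in $m$ variables, a semialgebraic map is Nash iff it is $C^l$ with $l=l(m,p,q)$ recursive] and the proof of Proposition~\ref{variete} / \ref{bord}''; you have merely spelled out the uniformity of the complexity of the local parametrizing maps that makes the choice of $l$ work across a whole parameter space $\mathcal{A}(n,p,q)$, which is the correct reading of the paper's compressed argument.
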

\begin{proof} 
(i) Just use the notification above and  the proof of Proposition \ref{variete}.\\
(ii) Use the notification above and the proof of Proposition \ref{bord}.
\end{proof}
In a similar way to Proposition \ref{homeo}, one has the following result in the Nash case.
\begin{pro}\label{diffeo}
Let $R$ and $K$ be two real closed fields such that $K$ is a real closed extension of $R$. Let $X$ and $Y$ be two  Nash submanifold
$R^n$. The Nash manifolds $X$ and $Y$ are Nash diffeomorphic if and only if $X_K$ and $Y_K$ are Nash diffeomorphic.
\end{pro}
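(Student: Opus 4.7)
The forward direction is immediate: if there is a Nash diffeomorphism $f:X\to Y$ with graph defined by a formula $\chi(c,x,y)$ and parameter $c\in R^r$, then the same formula with the same parameter, interpreted in $K$, defines the graph of a Nash diffeomorphism between $X_K$ and $Y_K$ (the property of being $C^{\infty}$ and bijective transfers at the level of the defining formula once one reduces to a bounded order of differentiability, as explained below). So the content lies in the converse, and the argument will closely follow that of Proposition \ref{homeo}, replacing ``semialgebraic homeomorphism'' with ``Nash diffeomorphism'' throughout.

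Write $X=\{x\in R^n:\phi(a,x)\}$ and $Y=\{x\in R^n:\psi(b,x)\}$ with parameters $a\in R^{m}$, $b\in R^{m'}$. Let $f:X_K\to Y_K$ be a Nash diffeomorphism. Since $f$ is in particular semialgebraic, its graph is defined by a formula $\chi(c,x,y)$ with parameter $c\in K^{r}$; let $(p,q)$ be an upper bound for the complexity of this formula. The central step will be to write a first-order sentence $\lambda(a,b,c)$ in the language of real closed fields expressing:
\begin{quote}
\emph{$\chi(c,\cdot,\cdot)$ is the graph of a bijection from the set defined by $\phi(a,\cdot)$ onto the set defined by $\psi(b,\cdot)$, which is a Nash diffeomorphism.}
\end{quote}

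The delicate ingredient is encoding ``Nash diffeomorphism'' as a first-order property. One applies the result of Ramanakoraisina recalled before Proposition \ref{nvariete}: given $m,p,q$, there is an effectively computable integer $l=l(m,p,q)$ such that any semialgebraic map of complexity at most $(p,q)$ defined on an open subset of $R^{m}$ is Nash if and only if it is $C^{l}$. Using this $l$, the condition ``$f_c$ and $f_c^{-1}$ are Nash'' becomes ``$f_c$ and $f_c^{-1}$ are $C^{l}$'', which is a genuine first-order condition since it only involves derivatives up to a fixed finite order. At boundary points one argues locally as in Propositions \ref{bord} and \ref{nvariete}, using the first-order characterization of Nash submanifolds with boundary: near boundary points the condition $C^{l}$ is read off in the half-space coordinates provided there. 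Combined with the first-order expressible conditions ``bijective'' and ``inverse is $C^{l}$'', this produces the desired formula $\lambda(a,b,c)$.

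By construction $K\models \lambda(a,b,c)$, hence $K\models \exists z\,\lambda(a,b,z)$. Since the parameters $a,b$ lie in $R$ and $K$ is a real closed extension of $R$, the Tarski--Seidenberg transfer principle yields $R\models \exists z\,\lambda(a,b,z)$. Thus there exists $c'\in R^{r}$ such that $\chi(c',\cdot,\cdot)$ defines the graph of a semialgebraic $C^{l}$ diffeomorphism from $X$ to $Y$ of complexity at most $(p,q)$. Invoking Ramanakoraisina's theorem in the opposite direction, this $C^{l}$ map is automatically Nash, yielding a Nash diffeomorphism $X\to Y$ over $R$. The main obstacle in the plan is precisely the first-order translation of ``Nash diffeomorphism'': without the truncation to $C^{l}$ provided by Ramanakoraisina's theorem, the condition $C^{\infty}$ is not expressible in the first-order language of real closed fields and the transfer argument would collapse.
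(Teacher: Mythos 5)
Your proposal is correct and follows exactly the route the paper takes: mirror the first-order encoding argument of Proposition \ref{homeo}, with the single additional ingredient that ``Nash'' is replaced by ``$C^{l}$'' via Ramanakoraisina's complexity bound so that the condition becomes first-order, and then apply Tarski--Seidenberg. You have simply spelled out in detail what the paper's one-line proof asserts.
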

\begin{proof}
The proof is similar to the proof of  Proposition \ref{homeo} adding the property that a semialgebraic map must  satisfy to be a Nash map which can be translated in a first order formula of the theory of real closed fields (cf.[R, Proposition 3.5]). This ends the proof.
\end{proof}
 We prove here the analogue of  Proposition \ref{sdf} for Nash manifolds.
\begin{pro}\label{born}
Given the integers  $n$, $p$ and $q$, there exists a couple
of integers $(t,u)$ such that for all  couples of Nash submanifolds of
$R^n$ of  complexity at most $(p,q)$ and  Nash diffeomorphic,
 there exists a Nash diffeomorphism $f$between them, whose graph 
$\Gamma_f$ admits a  complexity at most $(t,u)$.
\end{pro}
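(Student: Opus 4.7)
The plan is to mirror the proof of Proposition \ref{sdf}, replacing Hardt's semialgebraic triviality theorem by the Nash triviality in families (Theorem \ref{famille}) and using Proposition \ref{nvariete} to cut out the relevant parameter space inside $\mathcal{A}(n,p,q)$.

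First, by Proposition \ref{nvariete}, for each fixed dimension $m\leq n$, the set $\mathcal{N}(n,m,p,q)$ of those $a\in\mathcal{A}(n,p,q)$ such that $\mathcal{S}_a$ is a compact Nash submanifold of $R^n$ of dimension $m$ (possibly with boundary, in which case we keep track of the boundary as an auxiliary subfamily defined by a similar first order formula) is a semialgebraic subset of $\mathcal{A}(n,p,q)$. Since the dimension is bounded by $n$, it is enough to exhibit the bound on each $\mathcal{N}(n,m,p,q)$ separately and take the maximum over $m$.

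Second, restrict the universal family $\mathcal{S}(n,p,q)$ to the subset $\mathcal{N}(n,m,p,q)$ and consider the projection $\Pi$ onto this parameter space. Apply Theorem \ref{famille} to $\Pi$: we obtain a finite partition $\mathcal{N}(n,m,p,q)=\bigsqcup_{i=1}^{s}M^i$ into Nash manifolds and, for each $i$, a model compact Nash manifold with boundary $F^i\subset R^n$ together with a Nash trivialisation $h^i: F^i\times M^i\to \Pi^{-1}(M^i)$ compatible with the boundary. Let $(p^{*},q^{*})$ be the maximum of the complexities of the $h^i$; this is finite because $s$ is.

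Third, for each pair $(i,j)\in\{1,\dots,s\}^2$ such that the models $F^i$ and $F^j$ are Nash diffeomorphic, select once and for all a Nash diffeomorphism $\varphi_{ij}:F^i\to F^j$ of some complexity $(t_{ij},u_{ij})$. If $X=\mathcal{S}_a$ and $Y=\mathcal{S}_b$ are two Nash diffeomorphic fibres with $a\in M^i$ and $b\in M^j$, then the restrictions $h^i_{|\{a\}}:X\to F^i$ and $h^j_{|\{b\}}:Y\to F^j$ are Nash diffeomorphisms of complexities bounded by $(p^{*},q^{*})$, which forces $F^i\cong F^j$ in the Nash category. The composition $(h^j_{|\{b\}})^{-1}\circ\varphi_{ij}\circ h^i_{|\{a\}}$ is then a Nash diffeomorphism $X\to Y$ whose graph has complexity bounded only in terms of $(p^{*},q^{*})$ and $(t_{ij},u_{ij})$, independently of $X$ and $Y$. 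Setting
\[
(t,u)=\Bigl(\max_{m,\,(i,j)\in E_m}t'_{ij},\ \max_{m,\,(i,j)\in E_m}u'_{ij}\Bigr),
\]
where $E_m$ is the finite set of pairs $(i,j)$ for which $F^i$ and $F^j$ are Nash diffeomorphic inside the stratification of $\mathcal{N}(n,m,p,q)$, yields the claimed uniform bound.

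Structurally the argument is identical to Proposition \ref{sdf}; the only real point to verify is the applicability of Theorem \ref{famille}, which demands compact Nash fibres (possibly with boundary) and proper Nash submersions. This is precisely what Theorem \ref{famille} provides, so no genuine new obstacle arises and the Nash analogue goes through.
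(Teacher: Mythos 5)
Your proposal is correct and follows essentially the same approach as the paper: cut out the parameter space $\mathcal{N}(n,m,p,q)$ of Nash submanifolds inside $\mathcal{A}(n,p,q)$ via Proposition~\ref{nvariete}, apply the Nash triviality in families (Theorem~\ref{famille}) to partition this parameter space into finitely many strata with common fibre type, and then conclude as in Proposition~\ref{sdf} by composing fibre-restrictions of the trivialisations with a fixed diffeomorphism between models. You have merely made explicit two points that the paper leaves implicit — letting $m$ range over $1,\dots,n$ and then taking a maximum, and tracking the boundary subfamily — but these are the same ideas.
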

\begin{proof}
Let us notice that all Nash submanifolds of  $R^n$ of complexity 
at most $(p,q)$ belong to $\mathcal{A}(n,p,q)$. We have shown in the proof of 
 Proposition \ref{sdf} that the space of 
parameters  $\mathcal{A}(n,p,q)$  is a semialgebraic set.
The set of  parameters of Nash submanifolds of  $R^n$ of 
dimension $m$ of complexity at most $(p,q)$ is included in $\mathcal{A}(n,p,q)$  and is a semialgebraic subset
of $\mathcal{A}(n,p,q)$. Indeed, the condition for a 
semialgebraic subset to be a Nash manifold  can be translated uniformly and effectively in a first order formula of the theory of real closed fields 
(Proposition \ref{nvariete}.(i)). Let us denote this set by
$\mathcal{N}(n,m,p,q)$. By  the Nash Triviality  theorem 
mentioned above, there exists a finite Nash stratification 
$\mathcal{N}(n,m,p,q)=\bigcup\limits_{i=1}^s M^i$ of
$\mathcal{N}(n,m,p,q)$ into Nash manifolds $M^i$ such 
that the Nash manifolds parametrized by points in $M^i$ are Nash
diffeomorphic. The remainning part of the proof is similar to the 
end  of the proof of Proposition \ref{sdf}.
\end{proof}

Now, the existence of  a uniform bound for  Nash h-cobordism Theorem, can be deduced in the same way as for the semialgebraic case
\begin{teo}\label{bun}
 Given $n,m\geq 6$, $(p,q)\in \N^2$, there exists  $(t,u)\in \N^2$ such that for all Nash h-cobordism $(M,M_0,M_1)$ in $\R^n$ of complexity at most $(p,q)$ simply connected and \rm $\mbox{dim}M=m$, \em{there exists a Nash diffeomorphism $f:M\rightarrow M_0\times [0,1]$ such that its graph $\Gamma_f\in \mathcal{A}(2n+1, t,
u).$}
\end{teo}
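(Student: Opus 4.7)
The plan is to mirror the proof of Theorem \ref{bu} line by line, substituting at each step the Nash analog for the corresponding semialgebraic tool. First I would construct the space of parameters $\mathcal{NH}cob(n,m,p,q)$ for simply connected Nash h-cobordisms of complexity at most $(p,q)$ in $\R^n$ with $\dim M=m$. By Proposition \ref{nvariete}, the property of being a Nash submanifold of dimension $m$, respectively a Nash submanifold of dimension $m$ with a prescribed boundary, translates into first order formulas of the theory of real closed fields; combined with the disjointness and equality conditions forcing $\partial M = M_0 \sqcup M_1$, this cuts out a uniformly defined semialgebraic subset of $\mathcal{N}(n,m,p,q)\times \mathcal{N}(n,m-1,p,q)^2$. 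Adding the semialgebraic simple connectedness condition for the top piece yields $\mathcal{NH}cob(n,m,p,q)$; as in the semialgebraic case this is uniform (by Proposition \ref{realcon}) but not effective.

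Next I would build the tautological family $\mathcal{NC}(n,m,p,q)\subset \mathcal{NH}cob(n,m,p,q)\times \R^n$ together with its two boundary subfamilies $\mathcal{NC}_0$ and $\mathcal{NC}_1$, and apply Theorem \ref{famille} to the projection $\Pi:\mathcal{NC}\to \mathcal{NH}cob$, asking the Nash trivialization to be compatible with $\mathcal{NC}_0$ and $\mathcal{NC}_1$. This provides a finite partition $\mathcal{NH}cob(n,m,p,q)=\bigcup_{i=1}^s H_i$ into Nash manifolds and, for each $i$, a Nash diffeomorphism $\Pi_i:\Pi^{-1}(H_i)\to H_i\times C_i$ where $C_i=(C_i,C_{i,0},C_{i,1})$ is a model compact Nash cobordism and $\Pi_i$ carries the boundary subfamilies to $H_i\times C_{i,0}$ and $H_i\times C_{i,1}$. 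Let $(t_i,u_i)$ be a bound on the complexity of $\Pi_i$; the point is that $(t_i,u_i)$ depends only on $i$, not on the chosen parameter in $H_i$.

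For each index $i$ the model triple $C_i$ is a simply connected Nash h-cobordism of dimension $m\geq 6$, so by Theorem \ref{hcobn} there exists a Nash diffeomorphism $C_i\cong C_{i,0}\times[0,1]$. Since $C_i$ and $C_{i,0}\times[0,1]$ both have complexity bounded in terms of $n,p,q$ effectively, Proposition \ref{born} produces a Nash diffeomorphism $f_i:C_i\to C_{i,0}\times[0,1]$ whose graph has complexity at most some $(v,w)$ depending only on $n,p,q$. Given an arbitrary simply connected Nash h-cobordism $(M,M_0,M_1)$ parametrized by $a\in H_j$, the composition
\begin{equation*}
g_j \;=\; \bigl((\Pi_j|_{M_0})^{-1}\times \mathrm{id}_{[0,1]}\bigr)\circ f_j \circ \Pi_j|_M \;:\; M\longrightarrow M_0\times[0,1]
\end{equation*}
is a Nash diffeomorphism whose graph has complexity bounded by some $(t'_j,u'_j)$ independent of the particular $(M,M_0,M_1)$. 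Setting $(t,u)=(\max_j t'_j,\max_j u'_j)$ over the finite index set gives the required uniform bound.

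The main obstacle is ensuring that Theorem \ref{famille} can be applied to $\Pi$ jointly compatibly with the two boundary subfamilies $\mathcal{NC}_0$ and $\mathcal{NC}_1$ while preserving the Nash manifold-with-boundary structure on the fibers; this requires verifying that the Nash stratification and the trivializing Nash diffeomorphisms provided by Theorems \ref{sub} and \ref{subbord} can be refined so that both the interior and each boundary component trivialize simultaneously. Once this compatibility is secured the rest is a straightforward transcription of the semialgebraic argument with Proposition \ref{born} replacing Proposition \ref{sdf} and Theorem \ref{hcobn} replacing Theorem \ref{hcob}.
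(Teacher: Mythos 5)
Your proposal is correct and follows exactly the route the paper intends: the paper itself gives no separate proof for Theorem~\ref{bun}, stating only that it ``can be deduced in the same way as for the semialgebraic case,'' and your step-by-step transcription of Theorem~\ref{bu} with $\mathcal{N}(n,m,p,q)$, Theorem~\ref{famille}, Theorem~\ref{hcobn}, and Proposition~\ref{born} substituted for their semialgebraic counterparts is precisely that adaptation. Your flagged concern about compatibility of the Nash trivialization with both boundary subfamilies is the right thing to worry about, and it is handled by applying Theorem~\ref{sub} jointly to the total family and the two boundary subfamilies (which Coste--Shiota's result permits, since it allows stratifying over finitely many semialgebraic subfamilies), exactly as the semialgebraic proof invokes Hardt's theorem with compatibility.
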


 The validity over any real closed field  follows as consequence of the above theorem.
\begin{teo}\label{hvaln}
Let $(M, M_0, M_1)$ be a Nash  h-cobordism, semialgebraically simply connected defined over a real closed field
 $R$. If \rm $\mbox{dim}\, M \geq 6$, \em{then $M$
is Nash diffeomorphic to $M_0\times [0,1].$}
\end{teo}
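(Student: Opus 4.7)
The plan is to mimic the proof of Theorem \ref{hval} in the Nash setting, substituting the semialgebraic category by the Nash category throughout and using the uniform bound of Theorem \ref{bun} in place of Theorem \ref{bu}. First I would fix the ambient dimension $n$, the manifold dimension $m \geq 6$, and a bound $(p,q)$ on the complexity of $(M, M_0, M_1)$. Theorem \ref{bun} then provides a pair $(t,u) \in \N^2$, depending only on $n,m,p,q$, such that every simply connected Nash h-cobordism in $\R^n$ of dimension $m$ and complexity at most $(p,q)$ admits a Nash diffeomorphism onto $M_0 \times [0,1]$ with graph in $\mathcal{A}(2n+1, t, u)$.

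Next I would build a parameter space $\mathcal{N}hcob(n, m, p, q)$ for semialgebraically simply connected Nash h-cobordisms of complexity at most $(p,q)$. Proposition \ref{nvariete} supplies the uniform first-order formulas for the conditions ``$S$ is a Nash submanifold of dimension $m$'' and ``$S$ is a Nash submanifold with boundary $T$''; Proposition \ref{realcon} ensures that semialgebraic simple connectedness is preserved under real closed extension; and the cobordism incidence relations $\partial M = M_0 \sqcup M_1$ together with the existence of semialgebraic deformation retractions $M_i \hookrightarrow M$ are straightforwardly encoded as first-order sentences. In parallel, using Proposition \ref{diffeo} and Ramanakoraisina's bound recalled before Proposition \ref{nvariete}, I would define a uniformly semialgebraic set
$$\mathcal{G}_N(n, p, q, t, u) \subset \mathcal{A}(n,p,q)^2 \times \mathcal{A}(2n+1, t, u)$$
consisting of triples $(a, b, f)$ for which $f: \mathcal{S}_a \to \mathcal{S}_b \times [0,1]$ is a Nash diffeomorphism.

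Finally I would assemble the first-order sentence
$$\Phi_N(n, m, p, q, t, u) : \forall (a, b, c) \in \mathcal{N}hcob(n, m, p, q)\ \exists f \in \mathcal{A}(2n+1, t, u),\ (a, b, f) \in \mathcal{G}_N(n, p, q, t, u).$$
Theorem \ref{bun} yields $\R \models \Phi_N$, and Tarski-Seidenberg transfers this to $R \models \Phi_N$ for every real closed field $R$; since any Nash h-cobordism over $R$ has some finite ambient dimension $n$ and complexity $(p, q)$, the theorem follows by picking the appropriate $(t,u)$. I expect the main obstacle to be the bookkeeping needed to confirm that $\mathcal{N}hcob(n, m, p, q)$ is \emph{genuinely} uniformly definable across real closed fields — Proposition \ref{realcon} is doing the essential work here, since without it the transferred sentence could be vacuous over $R$. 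The Nash-specific aspects introduce no new difficulty beyond what Proposition \ref{nvariete} and the uniform recursive $l$ from Ramanakoraisina's theorem already handle.
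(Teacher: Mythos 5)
Your proposal is correct and takes essentially the same approach as the paper. The paper gives no written proof of Theorem~\ref{hvaln}, remarking only that it follows from Theorem~\ref{bun}; your argument spells out precisely the transfer one obtains by repeating the proof of Theorem~\ref{hval} with Theorem~\ref{bun}, Proposition~\ref{nvariete}, and Proposition~\ref{diffeo} in place of Theorem~\ref{bu}, Propositions~\ref{variete} and \ref{bord}, and Proposition~\ref{homeo}.
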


\section{Semialgebraic and Nash s-cobordism theorems}
We first  define the notion of simple homotopy equivalence.
\begin{defi} \rm
 Let $P$ and $Q$ be two polyhedra such that $Q\subset P$. Let
$B^n$ a PL n-ball. If $P= Q\cup B^n$ , $Q\cap B^n \subset \partial B^n$ and $Q\cap B^n$ is a PL $(n-1)$-ball, we say
that $Q$ is obtained from $P$ by an  \em{ elementary collapse.}
\rm We denote it by $P\Downarrow Q.$ We say also that there is 
\em{elementary expansion } \rm of $Q$ in $P$.

We say that  $P$ collapse on $Q$ and we denote it by $P\searrow Q$  if there is a finite sequence of elementary collapses  $P=P_0 \Downarrow P_1
\Downarrow ...\Downarrow P_m =Q.$ We will say also that there is an expansion of $Q$ to $P$ and we denote it by $Q\nearrow P$.
\end{defi}
\begin{defi}\rm
A polyhedron $P$ is said to be \em{simply homotopic } \rm to another polyhedron $Q$ if $P$  is obtained from $Q$ by a sequence  of  collapses and  expansions of $Q$
$$P= P_0\searrow P_1\nearrow P_2\searrow...\searrow P_n= Q \,\mbox{rel}\, Q$$ where rel $Q$ means that during the collapses and expansions operations $Q$  remains unchanged. We will say: $P$ is \em{s-homotopic} \rm to
$Q$.
\end{defi}

Now we define the notion of semialgebraic  simple homotopy equivalence.
\begin{defi} \label{colapse}\rm
Let $X$ and  $Y$ be two semialgebraic sets such that $Y\subset
X$.  We say that  $X$   \em{elementary semialgebraically  collapses} \rm on $Y$, and we write $ X\Downarrow Y$, if there is a semialgebraic map $ f: I^n \longrightarrow X$ such that
\begin{itemize}
\item $f|_{(0,1]\times I^{n-1}}$ is an embedding
\item $f(0\times I^{n-1}) \subset Y$,  $f((0,1]\times I^{n-1}) \cap
Y = \emptyset$ and $X = Y\cup f(I^n)$
\end{itemize}
We say that  $X$  semialgebraically collapses on  $Y$, and write
  $X\searrow Y$, if there is a finite sequence of elementary semialgebraic  collapses $X=X_0 \Downarrow X_1 \Downarrow
...\Downarrow X_m =Y.$
\end{defi}

\begin{remark}\label{collapse} \rm
A PL elementary collapse of compact polyhedra is in particular a semialgebraic collapse. A kind of converse is proved in next lemma.
\end{remark}
In the following definition, the notation  \virg{rel $Y$} means that during the collapse and expansion operations, $Y$ remains unchanged.
\begin{defi}\label{torsion}\rm
Let $Y\subset X$ be compact semialgebraic sets.
 The semialgebraic set $X$ is  \em{semialgebraically s-homotopic  } \rm to
 the semialgebraic set $Y$ if and only if there is a sequence of semialgebraic collapses and expansions of $X$ on $Y$ rel $Y$.
\end{defi}

\begin{lem}\label{csapl}
Let  $Y\subset X$ be two  compact semialgebraic sets and assume  $ X\Downarrow Y$. Take a triangulation $|K|\rightarrow X$ compatible with $Y$ and put $|K'|=h^{-1}(Y)$. Then  $|K|
\searrow |K'|$.
\end{lem}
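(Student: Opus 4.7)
The plan is to refine the triangulation so that the collapsing handle becomes a subpolyhedron, identify that subpolyhedron with a PL mapping cylinder, and then invoke the standard cylindrical collapse.

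\textbf{Step 1: refinement and reduction.} First I would apply the semialgebraic triangulation theorem to refine $h:|K|\to X$ to a finer semialgebraic triangulation $h_1:|K_1|\to X$ compatible with $Y$, with $f(I^n)$, and with $f(\{0\}\times I^{n-1})$. A subdivision does not change the underlying polyhedron or its PL-collapsibility class, so it suffices to produce a PL collapse $|K_1|\searrow h_1^{-1}(Y)$. Set $P:=h_1^{-1}(f(I^n))$ and $Q:=h_1^{-1}(f(\{0\}\times I^{n-1}))$. The hypotheses $X=Y\cup f(I^n)$, $f(\{0\}\times I^{n-1})\subset Y$ and $f((0,1]\times I^{n-1})\cap Y=\emptyset$ force $|K_1|=h_1^{-1}(Y)\cup P$ and $h_1^{-1}(Y)\cap P=Q$. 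Consequently, any PL collapse $P\searrow Q$ whose cells lie in $P\setminus Q$ automatically extends to a PL collapse $|K_1|\searrow h_1^{-1}(Y)$, since the free face of each elementary step is disjoint from $h_1^{-1}(Y)\setminus Q$.

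\textbf{Step 2: collapsing $P$ onto $Q$.} The composition $\tilde f:=h_1^{-1}\circ f : I^n\to P$ is a semialgebraic surjection, an embedding on $(0,1]\times I^{n-1}$, and maps $\{0\}\times I^{n-1}$ onto $Q$. Hence $P$ is semialgebraically homeomorphic to the mapping cylinder $M_g$ of $g:=f|_{\{0\}\times I^{n-1}}:I^{n-1}\to Q$ via $[(y,t)]\mapsto \tilde f(1-t,y)$. Equip $M_g$ with a semialgebraic triangulation compatible with its base $Q$ and its cylindrical product structure on the upper half; the uniqueness of the PL structure on a compact semialgebraic polyhedron ([SY], as recalled in Section~1) then upgrades the semialgebraic identification $P\cong M_g$ to a PL homeomorphism after possibly a common further subdivision. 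In PL topology the mapping cylinder of a PL map between compact polyhedra collapses onto its target by sweeping down the cylindrical direction one simplex at a time, so $M_g\searrow Q$; transporting this collapse back along the PL homeomorphism yields the required $P\searrow Q$, hence $|K|\searrow |K'|$.

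\textbf{Main obstacle.} The delicate point is that $f$ is only required to be an embedding on $(0,1]\times I^{n-1}$, so $f|_{\{0\}\times I^{n-1}}$ may identify points and $P$ need not be a PL ball. If $f$ were a full embedding, $P$ would be an $n$-ball with face $Q$ and the claim would be a single PL elementary collapse. In the general case the mapping-cylinder detour seems unavoidable, and the hard part is to realise the semialgebraic homeomorphism $P\cong M_g$ as a PL homeomorphism that actually carries the cylindrical triangulation of $M_g$ onto a simplicial subdivision of $P$; this is precisely where the uniqueness of the PL structure coming from the semialgebraic data is indispensable.
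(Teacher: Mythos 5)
Your proof is correct and shares the paper's central idea: the handle $P$ (the paper's $|K''|=\mathrm{cl}(M_g\setminus Y)$) is the mapping cylinder of the restriction $g=f|_{\{0\}\times I^{n-1}}$, so the lemma reduces to collapsing a PL mapping cylinder onto its target. You finish that reduction by citing the general PL fact that the mapping cylinder of a PL map of compact polyhedra collapses onto its target, whereas the paper builds the collapse by hand: it constructs a semialgebraic projection $\pi:|K''|\to[0,1]$ from the cylindrical structure, takes the regular neighbourhood $U=\pi^{-1}([0,\epsilon])$ of $\pi^{-1}(0)$, observes that $\mathrm{cl}(|K''|\setminus U)$ is a PL $n$-ball (since $f$ embeds $[\epsilon,1]\times I^{n-1}$), and chains $|K''|\searrow U\searrow\pi^{-1}(0)$, the second collapse by the regular-neighbourhood theorem [RS, Cor.\ 3.30]. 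Your route is conceptually cleaner, but it asks you to realise the abstract mapping cylinder $M_g$ as a compact semialgebraic subset of some $\R^N$ before the [SY] uniqueness of PL structure can be invoked, and the mapping-cylinder collapse deserves an explicit reference (it is not a one-line fact; it is essentially the two-stage collapse the paper writes out). The paper's hands-on argument sidesteps both points by never leaving the ambient triangulated polyhedron $|K|$. Two small slips: (1) with the convention $(y,0)\sim g(y)$ in $M_g$, your identification $[(y,t)]\mapsto\tilde f(1-t,y)$ has the cylinder reversed and is not well-defined at $t=0$; use $\tilde f(t,y)$, or glue at $t=1$ instead. (2) The codomain of $g$ should be $Q\subset|K_1|$, so $g$ should be $\tilde f|_{\{0\}\times I^{n-1}}$ rather than $f|_{\{0\}\times I^{n-1}}$. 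Your closing paragraph correctly isolates the delicate compatibility point; the paper handles it by requiring the triangulating homeomorphism $k$ to be compatible with $f([0,\epsilon]\times I^{n-1})$.
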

\begin{proof}
By hypothesis $X\Downarrow Y$.
 This means that there is a semialgebraic map $f: I^{n}\longrightarrow X $ satisfying the properties of Definition \ref{colapse}.
Set $g =f_{|0\times I^{n-1}}$. We may, consider $X$ as a  mapping cylinder $M_g$ of
$g$, setting
$ X = M_g = I^{n}\cup Y / {(0,x)\sim g(0,x)}.$
A semialgebraic  triangulation $|K|$ of $X$, compatible with $Y$ and
$cl( M_g\setminus Y)$, induces a semialgebraic  triangulation $|K'|$ of $Y$ and another $|K"|$ of $cl( M_g\setminus Y)$. This means that $K'\subset K$ and $K"\subset K$. It follows that $
|K| =|K"| \cup |K'|.$ We can construct a projection
$\pi :|K"| \longrightarrow [0,1]$  such that  $\pi ^{-1}(0) = |K"|
\cap |K'|,$ in the following way. Let $x$ and $y$ be in $I^n$. One
defines an equivalence relation "$x \approx y$"  if and only if
 $f(x)= f(y)$. The  map induced by $f$,  say $f'$,
defined by 
\begin{align}
I^n/\approx & \longrightarrow f(I^n) \nonumber\\
[t]&\mapsto f'([t])= f(t) \nonumber
\end{align}
 where $t=(t_1,...,t_n)\in I^n$,
is  a homeomorphism for  the quotient  topology. The 
semialgebraic triangulation of $X$ induces a semialgebraic 
homeomorphism $k: |K"|
\longrightarrow f(I^n)$. Let be $p: I\times I^{n-1} \longrightarrow I$ such that
 $p(t_1,...,t_n)= t_1$ for all $(t_1,...,t_n)\in I^n$ and $p': I^n / \approx \rightarrow I$ the  induced continuous map. Consider the map $\pi$
as follows
$ \pi = p'\circ f'^{-1}\circ k: |K"| \longrightarrow [0,1].$
There exists $0<\epsilon < 1$ such that $\pi
^{-1}([0,\epsilon])$ contains all vertices of the simplices 
that have at least one face in $\pi ^{-1}(0)$ making some 
subdivisions of $K"$ if necessary. We observe that:
$\pi^{-1}([0,\epsilon]) = k^{-1}(f([0, \epsilon]\times I^{n-1} )).$ The semialgebraic subset
 $f([0, \epsilon]\times I^{n-1})$ is a semialgebraic compact neighbourhood of
$f(0 \times I^{n-1})$ in the semialgebraic set $f(I^n).$ It follows that $\pi
^{-1}([0,\epsilon]) = U$ is a regular neighbourhood $\pi ^{-1}(0)$
in $|K"|$, asking that $k$ is compatible with
$f([0, \epsilon]\times I^{n-1})$. Since $ cl( |K"|\setminus U)$
is PL homeomorphic to $I^{n}$, $ |K"|$ PL collapses  on $U$. Furthermore, we have that $U$ PL collapses  on $\pi ^{-1}(0)$, since it is a regular neighbourhood of $\pi ^{-1}(0)$ cf. ([RS], Corollary 3.30). We get that
$|K|$ PL  collapses on $|K'|.$ This closes the proof.
\end{proof}
\begin{coro}\label{tau}
Let $M$ and $M_0$ be two compact semialgebraic manifolds with $M_0$ a deformation retraction of 
$M$. Then, $M$ collapses
semialgebraically on $M_0$ if and only if $|K|$ PL collapses 
on $|K_0|$, where $|K|$ is a semialgebraic triangulation of $M$ compatible with $M_0$
and $|K_0|$ the induced  triangulation of $M_0$.
\end{coro}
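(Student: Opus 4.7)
The plan is to prove the two directions separately, leveraging Lemma \ref{csapl} for the forward direction and the straightforward transfer of PL elementary collapses into semialgebraic ones for the converse.

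For the easy direction ($\Leftarrow$): assume $|K|\searrow|K_0|$ through a chain of PL elementary collapses. Each PL elementary collapse of compact polyhedra is a semialgebraic elementary collapse (Remark \ref{collapse}), and the semialgebraic triangulation $h\colon|K|\to M$ is a semialgebraic homeomorphism sending $|K_0|$ onto $M_0$; pushing the chain forward through $h$ yields $M\searrow M_0$ semialgebraically.

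For the forward direction ($\Rightarrow$): suppose we have a sequence of semialgebraic elementary collapses
\[
M = X_0 \Downarrow X_1 \Downarrow \cdots \Downarrow X_m = M_0.
\]
By the compatible semialgebraic triangulation theorem recalled in Section 1, choose a semialgebraic triangulation $h^*\colon|L|\to M$ compatible with each $X_i$. This gives a decreasing chain of subcomplexes $L = L_0 \supset L_1 \supset \cdots \supset L_m$ with $h^*(|L_i|) = X_i$, so in particular $|L_m|$ triangulates $M_0$. Applying Lemma \ref{csapl} to each elementary collapse $X_i\Downarrow X_{i+1}$, with the triangulation of $X_i$ induced by $h^*$, produces PL collapses $|L_i|\searrow|L_{i+1}|$. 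Concatenating these yields $|L|\searrow|L_m|$.

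It remains to transfer this PL collapse from $(|L|,|L_m|)$ to the originally given pair $(|K|,|K_0|)$. Since $h$ and $h^*$ are two semialgebraic triangulations of $(M,M_0)$, the semialgebraic Hauptvermutung recalled in Section 1 (cf.~[SY]) implies that the pairs $(|K|,|K_0|)$ and $(|L|,|L_m|)$ are PL homeomorphic. Passing to a common subdivision of the two simplicial pairs, each elementary collapse on the $L$-side can be subdivided into elementary collapses on the common refinement, and the PL-invariance of collapse under subdivision then transports the chain back to the $K$-side, giving $|K|\searrow|K_0|$.

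The main obstacle is precisely this final step: reconciling the freely chosen compatible triangulation $|L|$ with the given triangulation $|K|$ through the PL-homeomorphism type of the pair $(M,M_0)$. It is where one must invoke the (standard but nontrivial) fact that the existence of a PL collapse is a PL-invariant notion, i.e., stable under subdivision and under PL homeomorphism of pairs; the rest of the argument is a straightforward unwinding of Lemma \ref{csapl} and Remark \ref{collapse}.
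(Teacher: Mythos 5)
Your proof is correct and follows the route the paper has in mind, though the paper dispatches this corollary in a single line by citing Lemma \ref{csapl} (and, apparently by typo, Definition \ref{torsion} where Definition \ref{colapse} must be meant). You usefully make explicit two steps that the paper's one-line proof glosses over. First, to iterate Lemma \ref{csapl} along the whole chain $M=X_0\Downarrow\cdots\Downarrow X_m=M_0$ you need a fresh triangulation $|L|$ of $M$ compatible with \emph{all} the intermediate $X_i$, not merely with $M_0$; restricting $h^*$ to each $|L_i|$ gives the compatible triangulation the lemma requires, and concatenation yields $|L|\searrow|L_m|$. Second, to conclude for the arbitrarily given $|K|$ you must transfer that collapse across to $(|K|,|K_0|)$; for this you invoke the \emph{relative} form of the Shiota--Yokoi uniqueness theorem (two compatible semialgebraic triangulations of the pair $(M,M_0)$ are PL homeomorphic as pairs), whereas Section 1 only states the absolute version. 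The relative form is indeed available in [SY], and combined with the standard PL-invariance of collapse under PL homeomorphism of pairs (and under subdivision, which Lemma \ref{csapl} already uses internally), your transfer step is sound. The easy direction is handled exactly as you say, via Remark \ref{collapse} together with the observation that Definition \ref{colapse} is manifestly preserved by the semialgebraic homeomorphism $h$. So this is the same argument as the paper's, but with the genuinely nontrivial reconciliation of triangulations, which the paper suppresses, now done properly.
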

\begin{proof}
The proof follows using Lemma \ref{csapl} and  Definition
\ref{torsion}.
\end{proof}
We get now a semialgebraic version of the s-cobordism theorem.
\begin{teo}
Let $(M, M_0, M_1)$ be a connected semialgebraic  h-cobordism with $\mbox{dim}\, M \geq 6$. Then, $M$ and $M_0 \times [0,1]$ are semialgebraically  homeomorphic if and only if
$M$ is semialgebraically s-homotopic   to $M_0.$
\end{teo}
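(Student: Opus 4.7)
The plan is to reduce both directions to the PL s-cobordism theorem of Barden--Mazur--Stallings via a semialgebraic triangulation, using Corollary \ref{tau} (extended to chains of collapses and expansions) as the bridge between semialgebraic s-homotopy and PL simple homotopy equivalence. First I would fix, by Proposition \ref{sapl}, a semialgebraic triangulation $h:|K|\to M$ compatible with $M_0$ and $M_1$, yielding simplicial subcomplexes $K_0,K_1\subset K$ with $h(|K_i|)=M_i$, so that $(|K|,|K_0|,|K_1|)$ is a PL h-cobordism with $\dim|K|\geq 6$.

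For the direction ($\Leftarrow$), assume $M$ is semialgebraically s-homotopic to $M_0$ via a sequence
\[
M=X_0\searrow X_1\nearrow X_2\searrow\cdots\searrow X_n=M_0\ \text{rel }M_0.
\]
I would refine the triangulation $h$ to be compatible with every intermediate semialgebraic set $X_i$ and with the collapsing cylinders involved, then apply Lemma \ref{csapl} stepwise to convert each semialgebraic elementary collapse into a PL collapse (and each expansion into the reverse PL collapse). This yields a PL simple homotopy equivalence between $|K|$ and $|K_0|$ rel $|K_0|$, so the Whitehead torsion of the inclusion $|K_0|\hookrightarrow|K|$ vanishes. By the PL s-cobordism theorem, $|K|$ is PL homeomorphic to $|K_0|\times[0,1]$. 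Since a PL homeomorphism between compact polyhedra is in particular semialgebraic, composing with $h$ produces a semialgebraic homeomorphism $M\cong M_0\times[0,1]$.

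For the direction ($\Rightarrow$), assume $M$ is semialgebraically homeomorphic to $M_0\times[0,1]$. Triangulating $M_0\times[0,1]$ compatibly with $M_0\times\{0\}$ and transferring through $h$, one obtains two semialgebraic triangulations of the same compact manifold; by the semialgebraic uniqueness of the PL structure (the Shiota--Yokoi result recalled in the introduction), these are PL homeomorphic. Since $M_0\times[0,1]$ PL collapses to $M_0\times\{0\}$ (a standard finite sequence of elementary collapses across the $[0,1]$-factor), it follows that $|K|$ PL collapses to $|K_0|$ rel $|K_0|$. A PL elementary collapse of compact polyhedra is a semialgebraic collapse (Remark \ref{collapse}), so the collapse transfers through $h$ to give a semialgebraic collapse $M\searrow M_0$ rel $M_0$; this is in particular a semialgebraic s-homotopy.

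The main obstacle will be the extension of Corollary \ref{tau} from a single collapse to a full alternating chain of collapses and expansions. One must produce a common simplicial refinement compatible with every intermediate semialgebraic set $X_i$ and with the parametrising maps $f_i:I^{n_i}\to X_{i-1}$ witnessing each elementary collapse, so that the proof of Lemma \ref{csapl} can be applied uniformly at each step and reassembled rel $M_0$. Once that compatibility is arranged, the argument above goes through essentially mechanically.
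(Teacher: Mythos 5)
Your approach matches the paper's: the paper's own proof is a one-line reduction to the PL s-cobordism theorem via Corollary \ref{tau}, which is precisely what you are carrying out, and both directions as you lay them out are sound. One small correction to your proposed fix for the collapse/expansion chain: you cannot hope for a single refinement of the triangulation of $M$ compatible with every intermediate $X_i$, because at expansion stages the $X_i$ need not be subsets of $M$; instead triangulate each stage separately, apply Lemma \ref{csapl} (or its reverse) step by step, and use the Shiota--Yokoi uniqueness of semialgebraic triangulations to identify the two triangulations of each $X_i$ arising from successive steps up to PL homeomorphism, which preserves simple-homotopy type, so that the chain still yields $\tau(|K|,|K_0|)=0$.
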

\begin{proof}
It is an easy consequence of the classical PL s-cobordism  theorem using Corollary \ref{tau}.
\end{proof}
To close this section, we prove the Nash version of s-cobordism theorem.
\begin{teo}
Let $(M, M_0, M_1)$ be a connected Nash  h-cobordism with
$\mbox{dim}\, M \geq 6$. then $M$ and $M_0 \times [0,1]$ are Nash
diffeomorphic if and only if $M$ is semialgebraically s-homotopic   to $M_0.$
\end{teo}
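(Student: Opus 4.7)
The approach mirrors the proof of the semialgebraic s-cobordism theorem just given, combined with the Nash approximation theorem (Theorem \ref{appron}) to upgrade a $C^1$ diffeomorphism to a Nash one. The forward implication is immediate: a Nash diffeomorphism $M \to M_0 \times [0,1]$ is in particular a semialgebraic homeomorphism, so the semialgebraic s-cobordism theorem of the previous paragraph yields the semialgebraic s-homotopy between $M$ and $M_0$.

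For the converse, suppose that $M$ is semialgebraically s-homotopic to $M_0$ via a finite sequence of semialgebraic collapses and expansions. First, I would fix a semialgebraic triangulation $h:|K|\to M$ compatible with $M_0$, $M_1$, and with each of the (finitely many) intermediate semialgebraic sets appearing in that sequence. Iterating Lemma \ref{csapl} and invoking Corollary \ref{tau} at every collapse/expansion, one obtains that the compact polyhedron $|K|$ is PL s-homotopic to $|K_0|$ rel $|K_0|$; equivalently, the Whitehead torsion $\tau(|K|,|K_0|)$ vanishes. Since $M$ is a Nash manifold (hence smooth) and the PL structure induced by $h$ is compatible with its smooth structure, the smooth Whitehead torsion of the h-cobordism $(M,M_0,M_1)$ coincides with the PL one and therefore vanishes. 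The classical differentiable s-cobordism theorem then produces a $C^{\infty}$ diffeomorphism $f:M \to M_0 \times [0,1]$. Viewing $M$ and $M_0\times[0,1]$ as compact Nash manifolds with boundary, Theorem \ref{appron} applied to the $C^{1}$-diffeomorphic pair $(M,\,M_0\times[0,1])$ upgrades $f$ to a Nash diffeomorphism, closing the argument.

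The main delicate point is the passage across three categories in a single chain: from \emph{semialgebraic} simple homotopy equivalence to \emph{PL} simple homotopy equivalence (handled uniformly by Lemma \ref{csapl}/Corollary \ref{tau}, provided the triangulation is chosen compatible with the whole collapse/expansion sequence), and then from PL to \emph{smooth} Whitehead torsion on the smooth manifold $M$. The latter identification is a standard but nontrivial fact about smoothable PL manifolds and must be invoked explicitly. Once both torsions are recognised as vanishing, the smooth s-cobordism theorem and Theorem \ref{appron} combine formally to deliver the required Nash diffeomorphism, with essentially no further work.
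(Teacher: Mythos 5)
Your argument is correct and follows essentially the same strategy as the paper: pass to the underlying smooth objects, invoke the classical differentiable s-cobordism theorem, and upgrade the resulting $C^{\infty}$ diffeomorphism to a Nash one via Theorem~\ref{appron}. The differences are presentational rather than structural. For the forward implication the paper observes that a Nash diffeomorphism is a smooth diffeomorphism, applies the smooth s-cobordism theorem to get an s-homotopy, and then uses the remark that PL collapses are in particular semialgebraic collapses; you instead apply the just-proved semialgebraic s-cobordism theorem to the underlying semialgebraic homeomorphism. Either route is fine. For the converse the paper compresses your chain into the single sentence that semialgebraic s-homotopy implies s-homotopy and then cites the smooth s-cobordism theorem directly, whereas you spell out the passage: iterate Lemma~\ref{csapl}/Corollary~\ref{tau} to obtain vanishing of the PL Whitehead torsion, then identify PL and smooth torsion on the smooth manifold $M$. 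The PL/smooth compatibility you flag as the delicate point is indeed what the paper's one-liner implicitly uses, so your added explicitness is warranted rather than redundant; it does not change the proof, it just fills in a step the paper leaves tacit.
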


\begin{proof}
 $\Rightarrow$) Assume that $M$ is Nash diffeomorphic to $M_0 \times [0,1]$ .
Since $M$ and $M_0 \times [0,1]$ are Nash manifolds, they are in particular  smooth and diffeomorphic.
By the smooth s-cobordism theorem, one has
 $M$ is  s-homotopic  to $M_0.$ This implies that $M$ is semialgebraically s-homotopic
 to $M_0$.
\newline
 $\Leftarrow$) Conversely, assume that $M$ is semialgebraically s-homotopic  to $M_0.$  This implies in particular that $M$ is s-homotopic  to
$M_0.$ By  the smooth s-cobordism theorem (cf.[K]), one gets $M$ diffeomorphic to $M_0 \times [0,1].$ It follows by  Theorem \ref{appron} that $M$ is Nash diffeomorphic to
$M_0 \times [0,1]$. This closes the proof.
\end{proof}
\section{Semialgebraic and Nash s-cobordism theorems over any real closed field}

In this section we prove that the  semialgebraic and Nash 
s-cobordism theorems hold over any real closed field. To make short, we will just write \virg{sas-homotopic} instead of \virg{semialgebraically s-homotopic}. Since one implication in the s-cobordism theorem is obvious, in the following we  will only consider the other one.

First we see that sas-homotopy is preserved in a real closed extension.
\begin{pro}
Let $X$ and $Y$ be two semialgebraic subset of  $R^r$ and $K$ a closed real extension of $R$. Then,
$X \Downarrow Y$ algebraically if and only if $X_K \Downarrow Y_K$ semialgebraically.
\end{pro}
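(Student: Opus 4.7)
The plan is to encode the existence of a semialgebraic elementary collapse as a first-order formula of the theory of real closed fields and invoke the Tarski--Seidenberg transfer principle, following the template established in Proposition \ref{homeo}.

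First I would fix descriptions $X=\{x\in R^r:\phi(a,x)\}$ and $Y=\{x\in R^r:\psi(b,x)\}$ with parameters $a\in R^m$, $b\in R^{m'}$. Recall that $X\Downarrow Y$ means there exist an integer $n$ and a semialgebraic map $f:I^n\to X$ whose graph is a semialgebraic subset $\Gamma_f\subset R^n\times R^r$ satisfying: (a) $\Gamma_f$ is the graph of a continuous map into $X$, (b) $f|_{(0,1]\times I^{n-1}}$ is injective, (c) $f(\{0\}\times I^{n-1})\subset Y$ and $f((0,1]\times I^{n-1})\cap Y=\emptyset$, and (d) $X=Y\cup f(I^n)$. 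For each fixed integer $n$ and each fixed parameter tuple $c\in R^{\ell}$ describing $\Gamma_f$ by a formula $\chi(c,t,x)$ (as in the proof of Proposition \ref{homeo}), each of conditions (a)--(d) translates into a first-order formula of the theory of real closed fields. Let $\Theta_{n,\ell}(a,b,c)$ denote the conjunction of these conditions; it contains no quantifier over complexities.

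For the forward implication, assume $X\Downarrow Y$ semialgebraically over $R$. Then there exist $n$, $\ell$ and $c\in R^{\ell}$ such that $R\models\Theta_{n,\ell}(a,b,c)$. Viewing $a,b,c$ as elements of $K$ via the extension $R\subset K$, Tarski--Seidenberg gives $K\models\Theta_{n,\ell}(a,b,c)$, and the semialgebraic subset of $K^n\times K^r$ defined by $\chi(c,t,x)$ is exactly the graph of a map $f_K:I^n_K\to X_K$ whose restriction and image behave as required. Hence $X_K\Downarrow Y_K$ semialgebraically.

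For the reverse implication, assume $X_K\Downarrow Y_K$ semialgebraically. Then there are $n$, $\ell$ and $c'\in K^{\ell}$ with $K\models\Theta_{n,\ell}(a,b,c')$, so $K\models\exists z\,\Theta_{n,\ell}(a,b,z)$. Since $a,b\in R$, Tarski--Seidenberg yields $R\models\exists z\,\Theta_{n,\ell}(a,b,z)$, producing $c''\in R^{\ell}$ whose associated map $f:I^n\to X$ witnesses $X\Downarrow Y$ over $R$.

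The main obstacle is purely bookkeeping: verifying that each of the four conditions in Definition \ref{colapse} is genuinely first-order. Conditions (a), (b) and (d) are standard (continuity, injectivity and pointwise coverage), while (c) requires care because both the inclusion $f(\{0\}\times I^{n-1})\subset Y$ and the disjointness $f((0,1]\times I^{n-1})\cap Y=\emptyset$ must be written as universally quantified statements over the coordinates of $I^n$ using $\psi(b,\cdot)$. Once this encoding is done, transfer is immediate in both directions, and no bound on $n$ or $\ell$ is needed since the transferred formula uses the very same $n$ and $\ell$ on both sides.
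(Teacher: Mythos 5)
Your proof is correct and takes essentially the same approach as the paper, whose entire proof of this Proposition is the single line ``Use Tarski--Seidenberg Principle''; you have simply made explicit the first-order encoding of Definition \ref{colapse} and carried out the transfer in both directions. One small imprecision: condition (b) requires $f|_{(0,1]\times I^{n-1}}$ to be an \emph{embedding}, not merely injective, but since the graph of $f$ and the graph of the inverse of this restriction are both semialgebraic, continuity of that inverse is equally first-order expressible, so the encoding and the transfer argument go through unchanged.
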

\begin{proof} Use Tarski-Seidenberg Principle.
\end{proof}
\begin{pro}\label{omo}
Let $X$ and $Y$ be two semialgebraic subset of $R^r$ and $K$ a real closed extension of  $R$. Then,
$X$ is sas-homotopic to $Y$ if and only if $X_K$ is sas-homotopic to $Y_K$.
\end{pro}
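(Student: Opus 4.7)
The forward implication follows by iterating the preceding proposition: given a defining sequence $X = X_0 \searrow X_1 \nearrow \cdots \searrow X_n = Y$ rel $Y$ over $R$, each elementary collapse or expansion in the sequence transfers to $K$ by the preceding proposition, and concatenation produces a sas-homotopy $X_K = (X_0)_K \searrow \cdots \searrow (X_n)_K = Y_K$ rel $Y_K$ over $K$. For the converse, the difficulty is that a sas-homotopy over $K$ may use intermediate sets not defined over $R$; we handle this by turning sas-homotopy into a first-order property with uniform bounds, and concluding via Tarski--Seidenberg, exactly in the spirit of Theorem \ref{hval}.

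Concretely, I claim that for fixed integers $r,p,q$ there exist $N = N(r,p,q)$ and $(p',q') = (p',q')(r,p,q)$ such that any sas-homotopy between complexity-$(p,q)$ subsets of $R^r$ can be realized by a sequence of at most $N$ elementary collapses and expansions whose intermediate sets all have complexity at most $(p',q')$. The proof parallels Proposition \ref{sdf}: consider the semialgebraic subset $\mathcal{I}(r,p,q)$ of $\mathcal{A}(r,p,q)^2$ consisting of pairs $(a,b)$ with $\mathcal{S}_b \subset \mathcal{S}_a$, apply Hardt's trivialisation theorem to the tautological family of pairs over $\mathcal{I}(r,p,q)$ (compatibly with both subspaces), and obtain a finite partition of the parameter space into strata $H_i$ on which any two pairs $(\mathcal{S}_a,\mathcal{S}_b)$, $(\mathcal{S}_{a'},\mathcal{S}_{b'})$ are semialgebraically homeomorphic as pairs. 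Because sas-homotopy is preserved by semialgebraic homeomorphism of pairs (each elementary collapse map $f: I^n \to X$ composes with the homeomorphism to give another elementary collapse), the relation is constant on each stratum. Choosing a single witness sequence per stratum on which sas-homotopy holds, and taking the maximum over the finitely many strata, yields the desired $N$ and $(p',q')$.

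With the uniform bound in hand, the assertion ``$X$ is sas-homotopic to $Y$'' for complexity-$(p,q)$ pairs in $R^r$ is equivalent to the first-order sentence asserting the existence of an integer $k \leq N$ and of $k$ semialgebraic sets in $\mathcal{A}(r,p',q')$ forming a valid sas-homotopy sequence of length $k$ from $X$ to $Y$ (each elementary move being expressed by the defining condition of Definition \ref{colapse} applied to a suitable semialgebraic map $f: I^n \to \cdot$, whose parameters are themselves of bounded complexity). Tarski--Seidenberg then transfers this sentence between $R$ and $K$, giving the converse direction. The main technical point is the invariance of sas-homotopy under semialgebraic homeomorphism of pairs, which justifies that Hardt's stratification truly respects the sas-homotopy relation; this is intuitively clear but must be checked by directly transporting the maps $f: I^n \to X$ defining each elementary collapse through the homeomorphism of pairs.
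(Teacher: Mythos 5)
Your forward direction matches the paper's. For the converse, however, you take a genuinely different and more ambitious route, and it has a gap.

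The paper's converse is much more direct: given the sas-homotopy sequence $X_K = X_0 \searrow X_1 \nearrow \cdots \searrow X_s = Y_K$ over $K$, the paper \emph{fixes} this specific sequence -- its length, the number of elementary collapses in each step, and the syntactic shape of every defining formula $\phi_i$ and every formula $\beta_{ij}$ expressing an elementary collapse -- and forms the big conjunction $\Phi$ with all parameters displayed. The parameters $a$ (for $X$) and $b$ (for $Y$) already lie in $R$; the remaining parameters are existentially quantified. Since $K \models \exists \cdots \Phi$, Tarski--Seidenberg yields $R \models \exists \cdots \Phi$, which produces a sas-homotopy from $X$ to $Y$ over $R$. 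No uniform bound on the length or complexity of sas-homotopy sequences is needed -- the length is inherited from the given sequence over $K$.

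By contrast, you try to prove that for fixed $(r,p,q)$ there are uniform bounds $N$ and $(p',q')$ on the length and complexity of all sas-homotopy sequences between complexity-$(p,q)$ sets, making sas-homotopy a single first-order sentence. The weak link is your parenthetical justification that \virg{sas-homotopy is preserved by semialgebraic homeomorphism of pairs, since each elementary collapse map $f: I^n \to X$ composes with the homeomorphism.} This handles elementary \emph{collapses} inside $X$, but not \emph{expansions}: in a sequence $X = X_0 \searrow X_1 \nearrow X_2 \searrow \cdots$, the expansion step adds a cell so that $X_2 \not\subset X$, and a homeomorphism of pairs $(X,Y) \to (X',Y')$ gives you no ambient map to push that attached cell forward. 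To repair this you would need to reattach a cell to $X_1'$ along the transported attaching map and then realize the resulting abstract polyhedron semialgebraically (e.g.\ via a semialgebraic mapping cylinder, in the spirit of Lemma \ref{csapl}), and argue this can be iterated with control on complexity. That is real work you have not done, and your Hardt argument collapses without it, since the stratification is only guaranteed to be constant with respect to the pair-homeomorphism type, not a priori with respect to sas-homotopy. The paper's approach sidesteps this issue entirely and is the one to follow here; the uniform-bound statement you aim for is proved only later, in Theorem \ref{bus}, and in fact \emph{uses} Proposition \ref{omo} to show that the parameter set is uniformly defined.
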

\begin{proof}
Set:
$X= \{x\in R^r: \phi(a,x) \},$
$Y=\{x\in R^r: \psi(b,x) \}$ where $\phi(a,x)$ and $\psi(b,x)$ are first order formulas of the theory of real closed fields
with parameters
 $a\in R^m$ and $b\in R^{m'}$.\\
$\Rightarrow)$ $X$ is sas-homotopic to $Y$ 
if there is a sequence of semialgebraic collapses and expansions: 
$X=X_0\searrow X_1\nearrow X_2\searrow...\searrow X_s= Y\, \mbox{rel} \,Y.$
Let  $i$ be such that  $X_i\searrow X_{i+1}$. It is equivalent to say that there 
exists  a finite sequence of elementary semialgebraic  collapses. By the above Proposition, one has ${X_i}_K \searrow {X_{i+1}}_K$.\\ If on the other hand  $X_i\nearrow X_{i+1}$, it is the same to say that 
$X_{i+1}\searrow X_i$. We deduce in the same way that $X$ is sas-homotopic to $Y$.\\
$\Leftarrow)$ $X_K$ is sas-homotopic $Y_K$ if there  exists a sequence of semialgebraic  collapses and expansions: $X_K=X_0\searrow X_1\nearrow X_2\searrow...\searrow X_s= Y_K\, \mbox{rel} \,Y_K.$
Assume that $X_i=\{x\in K^r: \phi_i(a_i,x)\}$ with $a_i\in K^{m_i}$, where $\phi_i(a_i,x)$ is a first order  formula of the theory of real closed fields for  $i=0,...,s$  and  
$\phi_0(a_0,x)\}=\phi(a,x)$, $a_0=a$, $m_0=m$, $\phi_s(a_s,x)=\psi(b,x)$, $a_s=b$ and $m_s=m'$.\\
Let  $i\in\{0,...,s\}$ be such that $X_i\searrow X_{i+1}$. This implies that there exists a 
sequence of elementary semialgebraic  collapses  $X_i={X_i}_0\Downarrow {X_i}_1 \Downarrow...\Downarrow {X_i}_{k_i} = X_{i+1}.$
Set ${X_i}_j= \{x\in K^r: {\phi_i}_j({a_i}_j,x)\}$. The fact that  ${X_i}_j\Downarrow{X_i}_{j+1}$ can be translated in a first order sentence of the theory of real closed fields, denote it by: ${\beta_i}_j({a_i}_j, {a_i}_{j+1},{c_i}_j)$ where ${c_i}_j$ is a  parameter which define the graph of the semialgebraic map which define the elementary  semialgebraic  collapse. Then the fact that  $X_i\searrow X_{i+1}$ can be translated into a first order sentence of the theory of real closed fields as follows:
$ \bigwedge_{j=0}^{k_i} {\beta_i}_j({a_i}_j, {a_i}_{j+1},{c_i}_j).$
Now assume that  $X_i\nearrow X_{i+1}$. This is equivalent to $X_{i+1}\searrow X_i$. By the same techniques we construct a first order formula of the theory of real closed fields as follows:
$\bigwedge_{j=0}^{k_i} {\beta_i}'_j({a_{i+1}}_j, {a_{i+1}}_{j+1},{c_{i+1}}_j).$
Then the fact that  $X_K=X_0\searrow X_1\nearrow X_2\searrow...\searrow X_s= Y_K\, \mbox{rel} \,Y_K$ can be translated into a first order sentence of the theory of real closed fields as following:
$$(\bigwedge_{i=0}^{s}(\bigwedge_{j=0}^{k_i} {\beta_i}_j({a_i}_j, {a_i}_{j+1},{c_i}_j)\bigvee(\bigwedge_{j=0}^{k_{i+1}} {\beta_i}'_j({a_{i+1}}_j, {a_{i+1}}_{j+1},{c_{i+1}}_j)))\bigwedge(\bigwedge_{i,j} \lambda({a_i}_j, b)),$$
where $\lambda({a_i}_j, b)$ translate the fact that $Y_k\subset {X_i}_j$ and the collapses let fix pointwise $Y_k$ for all $i$ and  $j$. Let us denote this sentence by
$\Phi(({a_i}_j),({c_i}_j)),$ where ${a_0}_0= a$ and ${a_s}_{k_s}= b$. By hypothesis one gets 
 $K\models \Phi(({a_i}_j),({c_i}_j)).$ 
Some parameters are already in $R$. Take the parameters  ${a_l}_t$ and ${c_u}_v$ which are not. Omitting the  parameters defined over  $R$ in the  formula $\Phi$ (to make short), one gets:
$K\models \exists{y_l}_t \exists {z_u}_v \Phi({y_l}_t, {z_u}_v),$ where the quantification runs over the  variables with indices $l_t,\, u_v$ in $\Phi$.
So, by Tarski-Seidenberg Principle, we have:
$R\models \exists{y_l}_t \exists {z_u}_v \Phi({y_l}_t, {z_u}_v).$
This means that there exists  a sequence 
$X_K=X_0\searrow X_1\nearrow X_2\searrow...\searrow X_s= Y_K\, \mbox{rel} \,Y_K$ 
with all $X_i$ and semialgebraic  collapses defined by first order formulas of the 
theory of real closed fields with coefficients in $R$. This  implies that  $X$ is sas-homotopic to $Y$.
\end{proof}

We  prove that there is a uniform bound on the complexity of the semialgebraic homeomorphism in term of the complexity of the semialgebraic h-cobordism in the s-cobordism theorem. 
\begin{teo}\label{bus}
Given $n,m\geq 6$, $(p,q)\in \N^2$, there exists  $(t,u)\in \N^2$ such that for all semialgebraic h-cobordism $(M,M_0,M_1)$ in $\R^n$ of complexity at most $(p,q)$ with $M$ sas-homotopic to $M_0$ and \rm $\mbox{dim}M =m$, \em{there exists a semialgebraic  homeomorphism $f:M\rightarrow M_0\times I$ such that its graph $\Gamma_f\in \mathcal{A}(2n+1, t,
u).$}
\end{teo}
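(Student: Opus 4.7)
The plan is to follow the strategy of Theorem~\ref{bu} closely: construct a semialgebraic parameter space for those cobordisms of complexity at most $(p,q)$ that are h-cobordisms with $M$ sas-homotopic to $M_0$, apply the Hardt trivialisation theorem, and then invoke the semialgebraic s-cobordism theorem of the previous section together with Proposition~\ref{sdf} on each piece. Concretely, I would first reuse the family $\mathcal{C}(n,m,p,q)\to\mathcal{C}ob(n,m,p,q)$ and its subfamilies $\mathcal{C}_0,\mathcal{C}_1$ from the proof of Theorem~\ref{bu}, and apply Hardt's theorem to the projection $\Pi$, compatibly with $\mathcal{C}_0$ and $\mathcal{C}_1$, to obtain a finite semialgebraic partition $\mathcal{C}ob(n,m,p,q)=\bigcup_{i=1}^{s}H_i$ with semialgebraic trivialisations $\Pi_i:\Pi^{-1}(H_i)\to H_i\times C_i$ of complexity at most some $(t_i,u_i)$, where $C_i=(C_i,C_{i,0},C_{i,1})$ is a fixed representative triple.

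Next I would select the index set $J\subset\{1,\ldots,s\}$ consisting of those $j$ whose representative $C_j$ is an h-cobordism with $C_j$ sas-homotopic to $C_{j,0}$, so that $\bigcup_{j\in J}H_j$ parametrises exactly the cobordisms covered by our hypothesis. For each $j\in J$, the semialgebraic s-cobordism theorem of Section~6 produces a semialgebraic homeomorphism $C_j\cong C_{j,0}\times[0,1]$, and since $C_{j,0}\times[0,1]$ has complexity bounded in terms of $(p,q)$, Proposition~\ref{sdf} yields such a homeomorphism $f_j:C_j\to C_{j,0}\times[0,1]$ of complexity at most some $(v,w)$ depending only on $(n,p,q)$. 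Composing as in the proof of Theorem~\ref{bu},
\[
g_j=\bigl((\Pi_j|_{M_0})^{-1}\times \mathrm{id}_{[0,1]}\bigr)\circ f_j\circ \Pi_j|_M:\ M\longrightarrow M_0\times[0,1],
\]
gives a semialgebraic homeomorphism of complexity bounded by some $(t'_j,u'_j)$ that depends only on $j$. Setting $(t,u)=\bigl(\max_{j\in J}t'_j,\max_{j\in J}u'_j\bigr)$ then yields the required uniform bound.

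The main obstacle is justifying that the sas-homotopy condition is constant on each Hardt stratum $H_i$. In contrast to simple connectedness as used in Theorem~\ref{bu}, sas-homotopy cannot be captured by a single first order formula of the theory of real closed fields, because the length of the collapse/expansion sequence is unbounded a priori; thus the parameter space of interest is not obviously semialgebraic. I would resolve this through the observation that sas-homotopy is invariant under semialgebraic homeomorphism: given any semialgebraic map $f:I^n\to X$ witnessing an elementary collapse as in Definition~\ref{colapse}, composing with a semialgebraic homeomorphism $\phi:X\to X'$ produces a semialgebraic map $\phi\circ f:I^n\to X'$ witnessing the corresponding collapse of $X'$ onto $\phi(Y)$. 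Applied fibrewise through $\Pi_i$, this shows that whenever $C_i\searrow C_{i,0}$, every fibre $M$ over $H_i$ satisfies $M\searrow M_0$, and symmetrically. Hence $J$ is well defined and $\bigcup_{j\in J}H_j$ is semialgebraic as a finite union of strata, although, as in the non-effectiveness phenomenon discussed after Theorem~\ref{bu}, the actual selection of $J$ need not be effective.
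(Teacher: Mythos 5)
The proposal is correct and follows essentially the same route as the paper: replay the construction of Theorem~\ref{bu}, replacing the parameter set $\mathcal{H}cob(n,m,p,q)$ of simply connected h-cobordisms by the set of those with $M$ sas-homotopic to $M_0$, and conclude via Hardt trivialisation, the semialgebraic s-cobordism theorem, Proposition~\ref{sdf}, and a finite maximum over strata. Your explicit justification that the sas-homotopy condition is constant on each Hardt stratum (because an elementary semialgebraic collapse, and hence sas-homotopy, is transported by a semialgebraic homeomorphism preserving the subfamilies) fills in the step that the paper leaves implicit, invoking Proposition~\ref{omo} only for uniform definability over an arbitrary real closed field.
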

\begin{proof}
We follow the proof of Theorem \ref{bu}, instead of  choosing, in the partition of $Cob(n,m,p,q)$,  the semialgebraic h-cobordism semialgebraically simply  connected, we select the semialgebraic h-cobordism $(M, M_0, M_1)$ such that $M$ is sas-homotopic  to $M_0$ and connected.\\
This gives us  a semialgebraic set of parameters of the semialgebraic h-cobordism $(M,M_0,M_1)$ in $\R^n$ of complexity at most $(p,q)$ with $M$ sas-homotopic to $M_0$, which we denote by $Scob(n,m,p,q)$. The semialgebraic set $Scob(n,m,p,q)$ is defined uniformly by Proposition \ref{omo}.\\
%To do this, fix $j$ a positive integer. Bound the number of elementary collapses or expansions by $j$ in $X_i\searrow X_{i+1}$ or $X_i\nearrow X_{i+1}$ for $i=0,...,j$ where $M=X_0\searrow X_1\nearrow X_2\searrow...\searrow X_j= M_0\, \mbox{rel} \,M_0$, and $X_i\in\mathcal{A}(n,p,q)$ for every $i.$ The  number of possible elementary semialgebraic collapses for $X_i\searrow X_{i+1}$ or $X_i\nearrow X_{i+1}$  is $2^j$ for each $i$. So the number  of possible sas-homotopies as above is bounded by $2^{j^2}.$ Each possible case can be translated into a first order formula $\Theta_k$ in the theory of real closed fields with coefficients in $\mathbb{Z}$ (see the proof of Proposition \ref{omo}) where $k=1,...,2^{j^2}.$  Write $\Xi_k= \bigvee_{k=1}^{2^{j^2}}\Theta_k.$  Hence, bounding $j$ by $p$ we get a first order formula $\Upsilon_p= \bigvee_{j=1}^{p}\Xi_j$ which identifies effectively the h-cobordisms $(M, M_0, M_1)$ of complexity at most $(p,q)$ such that $M$ is sas-homotopic to $M_0$ as above. This gives us effectively a semialgebraic set of parameter of the semialgebraic h-cobordism $(M,M_0,M_1)$ in $\R^n$ of complexity at most $(p,q)$ with $M$ sas-homotopic to $M_0$, which we denote by $Scob(n,m,p,q)$. \\
Then we change  $\mathcal{H}cob(n,m,p,q)$ by $Scob(n,m,p,q)$ in the remaining part of the proof of Theorem \ref{bu}.
\end{proof}
%\begin{remark}\rm
%The uniform bound in the above theorem is recursive because all steps of the proof are effective.
%\end{remark}
Here is the semialgebraic s-cobordism theorem over any real closed field.
\begin{teo}
Let $(M, M_0, M_1)$ be a semialgebraic  h-cobordism defined over a real closed field $R$ such that $M$ sas-homotopic to $M_0$. If \rm $\mbox{dim}\, M \geq 6$, \em{then $M$
is semialgebraically homeomorphic to $M_0\times I.$}
\end{teo}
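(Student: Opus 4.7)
The plan is to transfer Theorem \ref{bus} over $\R$ to any real closed field $R$ by Tarski--Seidenberg, mirroring the proof of Theorem \ref{hval}. First I would fix the three parameters governing the cobordism: the ambient dimension $n$, the manifold dimension $m\geq 6$, and a pair $(p,q)$ bounding the complexity of $(M,M_0,M_1)$. By Theorem \ref{bus}, there is a pair $(t,u)\in\N^2$ such that, over $\R$, every semialgebraic h-cobordism of these numerical data with $M$ sas-homotopic to $M_0$ admits a semialgebraic homeomorphism $f:M\to M_0\times I$ whose graph lies in $\mathcal{A}(2n+1,t,u)$.

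Next I would package the statement as a first order sentence of the theory of real closed fields without parameters beyond $\Z$. The space of parameters $\mathcal{S}cob(n,m,p,q)$ constructed in the proof of Theorem \ref{bus} is uniformly defined: the cobordism conditions are expressible by Propositions \ref{variete} and \ref{bord}, and the extra condition \virg{$M$ is sas-homotopic to $M_0$} is preserved under real closed extensions by Proposition \ref{omo}, so $\mathcal{S}cob(n,m,p,q)_K=\mathcal{S}cob(n,m,p,q)(K)$ for any real closed extension $R\subset K$. Likewise, the set $\mathcal{G}(n,p,q,t,u)$ of triples $(a,b,f)\in \mathcal{A}(n,p,q)^2\times \mathcal{A}(2n+1,t,u)$ such that $f:\mathcal{S}_a\to \mathcal{S}_b\times [0,1]$ is a semialgebraic homeomorphism is uniformly definable, exactly as in the proof of Theorem \ref{hval}, since being a homeomorphism is a first order condition (cf.\ the proof of Proposition \ref{homeo}).

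Then I would write the sentence
$$\Phi_S(n,m,p,q,t,u):\ \forall(a,b,c)\in\mathcal{S}cob(n,m,p,q)\ \exists f\in\mathcal{A}(2n+1,t,u)\ (a,b,f)\in\mathcal{G}(n,p,q,t,u).$$
This is a first order sentence in the theory of real closed fields with coefficients in $\Z$, and Theorem \ref{bus} says exactly that $\R\models \Phi_S(n,m,p,q,t,u)$. By the Tarski--Seidenberg Principle, $R\models \Phi_S(n,m,p,q,t,u)$ for every real closed field $R$. Given any $(M,M_0,M_1)$ as in the hypothesis, choose $n,m,p,q$ large enough so that it is parametrized in $\mathcal{S}cob(n,m,p,q)(R)$; applying the sentence over $R$ yields the required semialgebraic homeomorphism $f:M\to M_0\times [0,1]$.

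The main subtle point, as in Theorem \ref{hval}, is guaranteeing that $\mathcal{S}cob(n,m,p,q)$ is uniformly defined. For the h-cobordism case this rested on Propositions \ref{con} and \ref{realcon} for the fundamental group; here the analogous role is played by Proposition \ref{omo}, which is precisely the statement that sas-homotopy survives extension of real closed fields. Once that invariance is in hand, every other ingredient (definability of the cobordism conditions, definability of being a homeomorphism, Tarski--Seidenberg) is already present in the proof of Theorem \ref{hval}, so the transfer goes through without further difficulty.
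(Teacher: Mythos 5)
Your proof is correct and matches the paper's approach exactly: the paper's own proof is the one-liner ``fixing a bound on the complexity of the semialgebraic h-cobordism, with Theorem \ref{bus}, the proof is the same as the proof of Theorem \ref{hval}'', and what you have written is precisely the expansion of that remark, with Proposition \ref{omo} correctly identified as the replacement for Propositions \ref{con} and \ref{realcon} in establishing uniform definability of the parameter space.
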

\begin{proof} Fixing a bound on the complexity of the semialgebraic h-cobordism,
with Theorem \ref{bus}, the proof is the same as the proof of Theorem \ref{hval}.
\end{proof}

We get in the same way the Nash version of these theorems.
\begin{teo}\label{busn}
Given $n,m\geq 6$, $(p,q)\in \N^2$, there exists  $(t,u)\in \N^2$ such that for all Nash h-cobordism $(M,M_0,M_1)$ in $\R^n$ of complexity at most $(p,q)$ with $M$ sas-homotopic to $M_0$ and \rm $\mbox{dim}M=m$, \em{there exists a Nash  diffeomorphism $f:M\rightarrow M_0\times I$ such that its graph $\Gamma_f\in \mathcal{S}(2n+1, t,
u).$}
\end{teo}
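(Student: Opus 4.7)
The plan is to mirror the argument for Theorem \ref{bus} with Nash replacing semialgebraic at each step. First I would build the parameter space $\mathcal{C}ob^N(n,m,p,q)$ of Nash cobordisms in $\R^n$ of dimension $m$ and complexity at most $(p,q)$: by Proposition \ref{nvariete} the property that a semialgebraic set is a Nash submanifold of $\R^n$ of given dimension (possibly with boundary) is uniformly and effectively first-order expressible, so the Nash analog $\mathcal{B}^N(n,m,p,q) \subset \mathcal{A}(n,p,q)$ of the set $\mathcal{B}(n,m,p,q)$ from the proof of Theorem \ref{bu} is uniformly and effectively defined, and so is $\mathcal{C}ob^N(n,m,p,q) \subset \mathcal{B}^N(n,m,p,q) \times \mathcal{B}^N(n,m-1,p,q)^2$. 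I would also retain the tautological family $\mathcal{C}^N(n,m,p,q)$ and its boundary subfamilies $\mathcal{C}_0^N, \mathcal{C}_1^N$.

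Next I would apply the Nash family triviality theorem (Theorem \ref{famille}) to the projection $\Pi : \mathcal{C}^N(n,m,p,q) \to \mathcal{C}ob^N(n,m,p,q)$, obtaining a finite Nash stratification $\mathcal{C}ob^N(n,m,p,q) = \bigcup_{i=1}^{s}H_i$ and Nash trivializations $\Pi_i : \Pi^{-1}(H_i) \to H_i \times (C^i, C^i_0, C^i_1)$ of bounded complexity. Within each stratum the fibers are Nash diffeomorphic as triples, hence in particular semialgebraically homeomorphic, so the property ``$M$ is sas-homotopic to $M_0$'' is constant on each $H_i$. I would set $\mathcal{S}cob^N(n,m,p,q) = \bigcup_{j \in J} H_j$, the union of those strata whose model satisfies the sas-homotopy condition; by Proposition \ref{omo} this union is uniformly defined across real closed fields (though, as in Theorem \ref{bus}, not effectively, since sas-homotopy is not algorithmically decidable).

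For each of the finitely many $j \in J$, the model $C^j$ is a Nash h-cobordism with $C^j$ sas-homotopic to $C^j_0$, so the Nash s-cobordism theorem of the previous section supplies a Nash diffeomorphism $f_j : C^j \to C^j_0 \times I$. By Proposition \ref{born} (the Nash analog of Proposition \ref{sdf}) applied to the pair $(C^j, C^j_0 \times I)$, whose complexities are controlled by the trivialization complexity and $(p,q)$, I can choose $f_j$ with graph in $\mathcal{A}(2n+1, v_j, w_j)$ for some $(v_j, w_j)$ depending only on $n,m,p,q$. Composing yields $g_j = ((\Pi_j|_{M_0})^{-1} \times id_I) \circ f_j \circ \Pi_j|_M : M \to M_0 \times I$, a Nash diffeomorphism whose complexity is bounded by some $(t'_j, u'_j)$ independent of $(M,M_0,M_1)$; setting $(t,u) = (\max_{j\in J} t'_j, \max_{j\in J} u'_j)$ finishes the argument.

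The main obstacle is ensuring that the Nash triviality given by Theorem \ref{famille} can be made compatible with both boundary subfamilies $\mathcal{C}_0^N$ and $\mathcal{C}_1^N$ simultaneously, since that theorem as stated handles only one boundary piece. This should follow by iteratively refining the stratification over successive subfamilies, or by rerunning the proof of Theorem \ref{famille} with $Y$ enlarged to $\{(x,b) : x \in \partial \mathcal{C}^N_b\} \cup \mathcal{C}_0^N \cup \mathcal{C}_1^N$ so that the two boundary components become strata of the resulting Nash submersion. Once this compatibility is in hand, the remainder is a routine Nash transcription of the proof of Theorem \ref{bus}.
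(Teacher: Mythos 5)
Your proposal is correct and follows essentially the same route as the paper, which for Theorem \ref{busn} simply cites the proof of Theorem \ref{bus}, itself a modification of Theorem \ref{bu} with the parameter set $\mathcal{H}cob$ replaced by the set $Scob$ of cobordisms sas-homotopic to a boundary component; the Nash version then substitutes Proposition \ref{nvariete}, Theorem \ref{famille}, Proposition \ref{born} and the Nash s-cobordism theorem for their semialgebraic counterparts, exactly as you describe. You have in fact supplied more detail than the paper gives, notably by flagging that the Nash triviality of Theorem \ref{famille} must be arranged compatibly with both boundary subfamilies $\mathcal{C}_0^N$ and $\mathcal{C}_1^N$ simultaneously (a point the paper leaves implicit in the semialgebraic case, where Hardt's theorem handles finitely many subfamilies at once), and your proposed fix of enlarging $Y$ or refining the stratification is sound.
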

\begin{proof}
The proof is  the same as the proof of Theorem \ref{bus}. 
\end{proof}
We close this section with the Nash s-cobordism Theorem over any real closed field.
\begin{teo}
Let $(M, M_0, M_1)$ be a Nash  h-cobordism defined over a real closed field $R$ such that $M$ sas-homotopic to $M_0$. If \rm $\mbox{dim}\, M \geq 6$, \em{then $M$
is Nash diffeomorphic to $M_0\times [0,1].$}
\end{teo}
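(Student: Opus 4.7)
The plan is to mirror the transfer argument used for the semialgebraic s-cobordism theorem over a real closed field, but using the Nash machinery instead. Fix integers $n,m\geq 6$ and a complexity bound $(p,q)$. By Theorem \ref{busn}, there exists $(t,u)\in\N^2$ such that over $\R$, every Nash h-cobordism $(M,M_0,M_1)\subset \R^n$ of dimension $m$ and complexity at most $(p,q)$ with $M$ sas-homotopic to $M_0$ admits a Nash diffeomorphism $f:M\to M_0\times[0,1]$ whose graph has complexity at most $(t,u)$. The goal is to translate this into a first-order sentence in the theory of real closed fields without parameters and conclude by Tarski–Seidenberg.

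Next I would assemble the space of parameters. The set $\mathcal{N}(n,m,p,q)$ of Nash submanifolds of complexity at most $(p,q)$ is uniformly definable by Proposition \ref{nvariete}(i), and imposing the Nash manifold with boundary condition (Proposition \ref{nvariete}(ii)) together with the cobordism conditions $\partial M=M_0\sqcup M_1$ produces a uniformly defined semialgebraic set $\mathcal{N}cob(n,m,p,q)$ of parameters of Nash cobordisms. The condition that the inclusions $M_0\hookrightarrow M$ and $M_1\hookrightarrow M$ are semialgebraic homotopy equivalences can be written as a first-order formula (one just quantifies over parameters of semialgebraic retractions, using Proposition \ref{prepa}). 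By Proposition \ref{omo}, the condition that $M$ be sas-homotopic to $M_0$ is uniformly definable: a sas-homotopy is a finite sequence of elementary semialgebraic collapses/expansions, and being so is a first-order property that is preserved under real closed extension. The set of parameters of the h-cobordisms appearing in the hypothesis is thus a uniformly defined semialgebraic set $\mathcal{S}\mathcal{N}cob(n,m,p,q)$.

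Then, as in the proof of Proposition \ref{diffeo} (compare also the proof of Theorem \ref{hval}), the set $\mathcal{G}^{\mathrm{Nash}}(n,p,q,t,u)$ of triples $(a,b,f)\in \mathcal{A}(n,p,q)^2\times\mathcal{A}(2n+1,t,u)$ such that $f:\mathcal{S}_a\to\mathcal{S}_b\times[0,1]$ is a Nash diffeomorphism is uniformly definable, since being a Nash map of complexity at most $(t,u)$ reduces to being $C^l$ for a recursively bounded $l$ (Ramanakoraisina's result used before Proposition \ref{nvariete}). We can then write the sentence
\[
\Phi_{\mathrm{Nash}}(n,m,p,q,t,u):\ \forall\,(a,b,c)\in\mathcal{S}\mathcal{N}cob(n,m,p,q)\ \exists f\in\mathcal{A}(2n+1,t,u)\ (a,b,f)\in\mathcal{G}^{\mathrm{Nash}}(n,p,q,t,u),
\]
which is a first-order sentence of the theory of real closed fields with coefficients in $\mathbb{Z}$.

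By Theorem \ref{busn}, $\R\models \Phi_{\mathrm{Nash}}(n,m,p,q,t,u)$. Tarski–Seidenberg therefore yields $R\models \Phi_{\mathrm{Nash}}(n,m,p,q,t,u)$ for every real closed field $R$. Since any Nash h-cobordism $(M,M_0,M_1)$ over $R$ with $\dim M\geq 6$ and $M$ sas-homotopic to $M_0$ has some complexity $(p,q)$ and lies in some ambient $R^n$, applying the sentence to the parameters of $(M,M_0,M_1)$ produces the desired Nash diffeomorphism $M\to M_0\times[0,1]$. The main obstacle is making sure all the ingredients, in particular the sas-homotopy condition and the Nash diffeomorphism property, are genuinely first-order and transfer under extension of real closed fields; but this has already been set up in Propositions \ref{omo}, \ref{diffeo}, and \ref{nvariete}, so the proof reduces to quoting Theorem \ref{busn} and invoking Tarski–Seidenberg exactly as in Theorem \ref{hval}.
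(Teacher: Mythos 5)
Your proposal is correct and follows essentially the same route as the paper: the paper's proof is simply the one-line remark that, fixing a complexity bound, one applies Theorem~\ref{busn} and repeats the transfer argument of Theorem~\ref{hvaln} (itself the Nash analogue of Theorem~\ref{hval}), which is exactly the assembly of parameter spaces, first-order sentence, and Tarski--Seidenberg invocation you spell out.
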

\begin{proof} Fixing a bound on the complexity of the Nash h-cobordism,
with Theorem \ref{busn}, the proof is the same as the proof of Theorem \ref{hvaln}.
\end{proof}

IRMAR (UMR CNRS 6625), Université de Rennes 1, Campus de Beaulieu, 35042 Rennes cedex, France\\
 email: \texttt{kartoue.demdah@univ-rennes1.fr}\\\\\
and \\ \\
 Dipartimento di Matematica \virg{Leonida Tonelli}, 
Largo Bruno Pontecorvo 5,
56127, Pisa
Italy.

\end{document}